\documentclass[12pt,reqno,fleqn]{amsart}  
\usepackage{tikz}
\usepackage{amsmath,amstext,amsthm,amssymb,amsxtra}
\usepackage{mathrsfs}
\usepackage[top=1.5in, bottom=1.5in, left=1.25in, right=1.25in]	{geometry}
\usepackage{lmodern}

\usepackage{graphics}
\usepackage{euscript}
\usepackage{xcolor}
\usepackage{mathtools}
\mathtoolsset{showonlyrefs,showmanualtags}

\usepackage{hyperref} 
\hypersetup{
	colorlinks=true,       
	linkcolor=blue,          
	citecolor=magenta,        
	filecolor=magenta,      
	urlcolor=cyan           
}

\usepackage[msc-links]{amsrefs}


\newtheorem{theorem}{Theorem}[section]

\newtheorem{proposition}{Proposition}[section]
\newtheorem{lemma}{Lemma}[section]
\newtheorem{corollary}{Corollary}[section]
\newtheorem{OldTheorem}{Theorem}
\theoremstyle{definition}
\newtheorem{definition}{Definition}[section]
\theoremstyle{definition}

\newtheorem{remark}{Remark}[section]

\theoremstyle{remark}
\newtheorem{prop}{U}

\def\supp{{\rm supp\,}}

\def\dist{{\rm dist}}
\def\Int{{\rm Int}}

\def\ZS{\ensuremath{\mathcal S}}

\def\ZI{\ensuremath{\textbf 1}}

\def\ZN{\ensuremath{\mathbb N}}

\def\ZD{\ensuremath{\cal D}}

\def\ZR{\ensuremath{\mathbb R}}

\def\ZT{\ensuremath{\mathbb T}}

\def\ZD{\ensuremath{\mathcal D}}

\numberwithin{equation}{section}
\def\md#1#2\emd{\ifx0#1
	\begin{equation*} #2 \end{equation*}\fi  
	\ifx1#1\begin{equation}#2\end{equation}\fi   
	\ifx2#1\begin{align*}#2\end{align*}\fi   
	\ifx3#1\begin{align}#2\end{align}\fi    
	\ifx4#1\begin{gather*}#2\end{gather*}\fi  
	\ifx5#1\begin{gather}#2\end{gather}\fi   
	\ifx6#1\begin{multline*}#2\end{multline*}\fi  
	\ifx7#1\begin{multline}#2\end{multline}\fi  
	\ifx8#1\begin{multline*}\begin{split}#2\end{split}\end{multline*}\fi
	\ifx9#1\begin{multline}\begin{split}#2\end{split}\end{multline}\fi
}
\newcommand {\e }[1]{\eqref{#1}}
\newcommand {\lem }[1]{Lemma \ref{#1}}

\newcommand {\cor }[1]{Corollary \ref{#1}}
\newcommand {\pro }[1]{Proposition \ref{#1}}
\newcommand {\trm }[1]{Theorem \ref{#1}}

\newcommand {\df }[1]{Definition \ref{#1}}
\newcommand {\p }[1]{{U\ref{#1}}}

\title[] {On the convergence sets of operator sequences \\on spaces of homogeneous type}
\author{Grigori A. Karagulyan}
\address{Institute of Mathematics of NAS of RA, Marshal Baghramian ave., 24/5, Yerevan, 0019, Armenia} 
\address{Faculty of Mathematics and Mechanics, Yerevan State
	University, Alex Manoogian, 1, 0025, Yerevan, Armenia} 
\email{g.karagulyan@ysu.am}

\thanks{The work was supported by the Science Committee of RA, in the frames of the research project 21AG‐1A045}

\subjclass[2010]{42C05, 42C10, 42C25, 42A55}
\keywords{Calder\'on-Zygmund operator, weighted inequalities, sparse operators, maximal function, martingale transform, homogeneous spaces}
\begin{document}

	\begin{abstract}
		We consider sequences of operators $U_n:L^1(X)\to M(X)$, where $X$ is a space of homogeneous type. Under certain conditions on the operators $U_n$ we give a complete characterization of convergence (divergence) sets of functional sequences $U_n(f)$, where $f\in L^p(X)$, $1\le p\le \infty$. The results are applied to characterize convergence sets of some specific operator sequences in classical analysis.
	\end{abstract}

	\maketitle  
	\section{Introduction}
	\subsection{A historical overview}
	Let 
	\begin{equation}\label{x100}
		f=\{f_k(x),\quad k=1,2,\ldots\}
	\end{equation} 
be an infinite sequence of real functions. Denote by $C(f)\subset \ZR$ the convergence set of sequence \e{x100}, i.e. the set of points $x\in \ZR$ such that $\lim f_n(x)$ exists. A classical theorem of Hahn-Sierpinski \cite{Hahn,Sie} asserts that if functions \e{x100} are continuous, then $C(f)$ is a $F_{\sigma\delta}$-set, and conversely, every $F_{\sigma\delta}$-set is a convergence set for a sequence of continuous functions.  The first part of this statement is immediate, since the convergence set of \e{x100} may be written in the form
	\begin{equation*}
	C(f)=\cap_{m=1}^\infty\cup_{n=1}^\infty\cap_{k=1}^\infty \{x:\, |f_{n+k}(x)-f_n(x)\le 1/m\},
	\end{equation*}
and the latter is a $F_{\sigma\delta}$-set if the functions are continuous (see \df{D1}). The second part of the Hahn-Sierpinski theorem requires certain construction of a sequence of continuous functions for which a given $F_{\sigma\delta}$-set is a convergence set (see also \cite{Hau}, page 307). Note that the complement of the convergence set $C(f)$ is the divergence set of \e{x100}, which we denote by $D(f)$. So the $G_{\delta\sigma}$ sets completely  characterize the divergence sets of sequences of continuous functions. One can also consider unbounded divergence set $UD(f)\subset D(f)$ of \e{x100}, which consists of the points $x$, satisfying $\limsup |f_n(x)|=\infty$. It is also known that for a set to be the $UB$-set for a sequence of continuous functions it is necessary and sufficient to be a $G_\delta$-set (see \cite{Hahn,Sie,Hau}). Hence, we have certain topological characterization of the $C$, $D$ and $UD$ sets in the class of sequences consisting of continuous functions.

Such characterization problems were commonly considered in many fields of analysis (Fourier series, analytic functions on the unit ball, power series, differentiation theory) and there are many published papers and open problems, some of which will be considered in the last section. Earlier reviews of problems, concerning  the convergence or divergence sets of Fourier series in trigonometric, Walsh and Haar orthogonal systems the reader can find in papers \cite{Wade,Uly1,Uly2}. We also refer the monograph \cite{CoLo} related to boundary exceptional sets of analytic functions. 

Let us state few remarkable examples of characterization theorems below. The first one provides a complete characterization of non-differentiability points sets of continuous functions. 
\begin{OldTheorem}[Zahorski, \cite{Zah}]\label{OT3}
	A set $E\subset \ZR$ can be a set of non-differentiability points of a real continuous function if and only if it is a union of a $G_\delta$-set and a $G_{\delta\sigma}$-null-set.
\end{OldTheorem}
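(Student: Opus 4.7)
Let $f:\ZR\to\ZR$ be continuous and let $E$ denote its set of points of non-differentiability. I would decompose
\begin{equation*}
E = E_\infty \cup E_{\mathrm{fin}},
\end{equation*}
where $E_\infty$ is the set of points at which at least one of the four Dini derivatives $\overline{D}^{\pm}f$, $\underline{D}^{\pm}f$ is infinite, and $E_{\mathrm{fin}}$ is the set at which all four Dini derivatives are finite but do not coincide. The goal is to show $E_\infty$ is $G_\delta$ and $E_{\mathrm{fin}}$ is a $G_{\delta\sigma}$ null set. First I would use the fact that, for continuous $f$, the difference quotient $F_h(x)=(f(x+h)-f(x))/h$ is jointly continuous in $(x,h)$ on $\ZR\times(\ZR\setminus\{0\})$, so each $\sup_{0<h<1/n}F_h(x)$ is lower semicontinuous in $x$; since $\overline{D}^{+}f(x)=\inf_n\sup_{0<h<1/n}F_h(x)$, each level set $\{\overline{D}^{+}f>\alpha\}$ is a $G_\delta$, and hence $\{\overline{D}^{+}f=+\infty\}=\bigcap_n\{\overline{D}^{+}f>n\}$ is $G_\delta$. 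Combining the analogous statements for $-\infty$ and for the three remaining Dinis exhibits $E_\infty$ as a $G_\delta$ set. For $E_{\mathrm{fin}}$ I would invoke the Denjoy--Young--Saks theorem, by which the set of points where all four Dini derivatives are finite but disagree has Lebesgue measure zero; writing the disagreement condition as a countable union over rational $r<s$ and positive integers $N$ of sets of the form $\{\overline{D}f>s\}\cap\{\underline{D}f<r\}\cap\{|\overline{D}^{\pm}f|,|\underline{D}^{\pm}f|<N\}$ exhibits $E_{\mathrm{fin}}$ as a $G_{\delta\sigma}$.

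\textbf{Sufficiency.} Given $E=G\cup N$ with $G$ a $G_\delta$ set and $N$ a $G_{\delta\sigma}$ null set, I would construct a continuous $f=g+h$ whose non-differentiability set is exactly $E$, with $g$ non-differentiable precisely on $G$ and $h$ non-differentiable precisely on $N$, arranging the amplitudes of the two pieces on disjoint dyadic scales so that cancellation of Dini derivatives cannot occur off $G\cup N$. Writing $G=\bigcap_n U_n$ as a nested intersection of open sets, the function $g$ would be assembled by summing sawtooth Lipschitz perturbations supported on a Whitney-type decomposition of $U_n\setminus U_{n+1}$, with amplitudes chosen so that $\overline{D}^{+}g-\underline{D}^{+}g=+\infty$ on $G$ while $g\in C^{1}$ off $G$. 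For $h$, writing $N=\bigcup_m N_m$ with each $N_m$ a $G_\delta$ null set, one would cover each $N_m$ by an open set of arbitrarily small total length and sum Lipschitz bumps supported on these covers, with amplitudes decaying fast enough to preserve global continuity but slowly enough to force wildly oscillating difference quotients on $N$.

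\textbf{Main obstacle.} The technical heart of the theorem is the sufficiency construction, and within it the requirement that the non-differentiability set be \emph{exactly} $E$ rather than merely containing it. The $G_{\delta\sigma}$-null component is particularly subtle because it forces an interplay between topological structure (a countable union of $G_\delta$ sets) and metric smallness (null Lebesgue measure): one must ensure that an infinite family of sawtooth perturbations exhibits the required oscillatory behaviour on $N$, yet the partial sums recombine to a continuous function that is genuinely differentiable at every point of the null-measure complement. Controlling this cancellation along the nested Whitney covers, so that no spurious non-differentiability is created outside $E$, is where Zahorski's original argument does its real work.
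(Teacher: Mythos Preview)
The paper does not prove this theorem; it is quoted as a classical result of Zahorski, with pointers to simplified proofs by Piranian and Fowler--Preiss. The only contact the paper makes with a proof is in Section~2.4, item~(5): there it observes that the $G_{\delta\sigma}$-null half of the \emph{sufficiency} can be recovered from the paper's own Theorem~\ref{T1}. One writes the sought function in the form $f_2(x)=\int_{-\infty}^x g(t)\,dt$ with $g\in L^\infty(\ZR)$; differentiability of $f_2$ at $x$ is then equivalent to convergence of the interval averages $|I|^{-1}\int_I g$ as $|I|\to 0$, $I\ni x$. These averaging operators form an approximation of the identity satisfying \p{C1}--\p{C4}, so Theorem~\ref{T1} supplies a bounded $g$ whose averages diverge exactly on the prescribed $G_{\delta\sigma}$ null set. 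The $G_\delta$ piece of the sufficiency and the entire necessity direction are not addressed in the paper.

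Your outline follows the classical direct route (Dini-derivative analysis plus Denjoy--Young--Saks for necessity; layered sawtooth constructions for sufficiency), which is the approach of the cited references rather than the paper's operator-theoretic detour. The paper's route buys unification --- the $f_2$ construction drops out as a special case of its general divergence-set machinery --- but it does not touch the $G_\delta$ component $f_1$ or the necessity; your route is self-contained but must execute the delicate construction you correctly flag as the main obstacle. One small correction in your necessity sketch: the sets $\{|\overline D^{\pm}f|,|\underline D^{\pm}f|<N\}$ are $F_\sigma$ rather than $G_\delta$ (upper level sets of $\overline D^+f$ are $G_\delta$, so sublevel sets are $F_\sigma$); since closed subsets of $\ZR$ are themselves $G_\delta$, intersecting with an $F_\sigma$ still lands you in $G_{\delta\sigma}$, and the conclusion survives.
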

  In fact, the main ingredient of this theorem is the construction of a continuous function, whose non-differentiability points set coincides with a given set, which is a union of a $G_\delta$-set and a $G_{\delta\sigma}$-set of measure zero.  For the realization of this construction in \cite{Zah} a subtle technique was used (see also \cite{Pir} and \cite{Fow} for alternative simplified proofs of \trm{OT3}). The next well-known result is an extension of Kolmogorov's celebrated theorem of \cite{Kol} on an example of an integrable function, whose Fourier series is everywhere divergent.
\begin{OldTheorem}[Zeller, \cite{Zel}]\label{T-Z}
	Let $E$ be a $F_\sigma$-set in the circle $\ZT=\ZR/2\pi$. Then there exists a function $f\in L^1(\ZT)$ whose Fourier series convergence at any point $x\in E$ and unboundedly diverges whenever $x\in \ZT\setminus E$.
\end{OldTheorem}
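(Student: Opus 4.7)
The plan is to realize $f$ as a lacunary sum $f=\sum_{n\ge1}P_n$ of trigonometric polynomials with carefully arranged spectral and localization properties. Writing the $F_\sigma$-set $E=\bigcup_{n\ge1}F_n$ as an increasing union of closed (compact) subsets of $\ZT$, and setting $V_n=\ZT\setminus F_n$, the divergence target $D=\ZT\setminus E=\bigcap_n V_n$ becomes a decreasing intersection of open sets. I will fix a rapidly growing integer sequence $M_1<M_2<\cdots$ and require that each $P_n$ has spectrum in the block $(M_n,M_{n+1}]$, i.e., $\widehat P_n(j)=0$ for $|j|\notin(M_n,M_{n+1}]$; this spectral disjointness yields the clean identity
\[
S_kf(x)=\sum_{m<n}P_m(x)+S_kP_n(x),\qquad M_n<k\le M_{n+1}.
\]

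The three properties I will demand of $P_n$ are: (i) $\|P_n\|_{L^1}\le 2^{-n}$, so that $f\in L^1(\ZT)$; (ii) $|S_kP_n(x)|\le 2^{-n}$ for every $k$ and every $x\in F_n$, which will force $S_kf$ to converge pointwise on $E$; (iii) $\max_k|S_kP_n(x)|\ge\lambda_n$ for every $x\in V_n$, with $\lambda_n$ chosen inductively, after $P_1,\dots,P_{n-1}$ are fixed, so that $\lambda_n>n+\bigl\|\sum_{m<n}P_m\bigr\|_\infty$. Given such $P_n$, the assembly is routine. Summability (i) gives $f\in L^1$. For $x\in F_m\subset E$, property (ii) majorizes the tail $\sum_{n\ge m}|S_kP_n(x)|$ by $\sum_{n\ge m}2^{-n}$ uniformly in $k$, so the tail is uniformly Cauchy; the finitely many low-index terms are polynomials, so $S_kP_n(x)\to P_n(x)$ as $k\to\infty$, and one obtains $S_kf(x)\to f(x)$. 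For $x\in D\subset V_n$, the displayed decomposition and (iii) yield $\max_{M_n<k\le M_{n+1}}|S_kf(x)|\ge\lambda_n-\bigl\|\sum_{m<n}P_m\bigr\|_\infty\ge n$, whence $\sup_k|S_kf(x)|=\infty$.

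The main obstacle is the simultaneous achievement of (ii) and (iii): $P_n$ must be a polynomial whose Dirichlet partial sums oscillate wildly on $V_n$ yet remain pointwise tiny on $F_n$ for every $k$. The natural route is a localized Kolmogorov construction: take Kolmogorov's classical polynomials producing unbounded partial sums, multiply by a smooth cutoff concentrated in $V_n$ and away from $F_n$, and approximate the result by a trigonometric polynomial of degree $\le M_{n+1}$. The large partial sums on $V_n$ are inherited from the Kolmogorov ingredient, while control on $F_n$ comes from the off-diagonal decay $|D_k(t)|\lesssim 1/|t|$ of the Dirichlet kernel, giving the pointwise bound $|S_kP_n(x)|\lesssim \|P_n\|_{L^1}/\dist(x,\supp P_n)$; choosing $\|P_n\|_{L^1}$ small enough compared with the separation yields (ii). A genuine technical subtlety, which would need careful handling, is that boundary points of $F_n$ have zero distance to $V_n$; this is resolved by performing the construction on a slightly shrunken open neighborhood $V_n'$ of $D$ with $\overline{V_n'}\subset V_n$, arranged by passing to a subsequence of the $V_n$ and, if needed, refining the initial decomposition $E=\bigcup F_n$ so that $D$ admits such nested open neighborhoods.
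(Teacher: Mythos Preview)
The paper does not prove this statement. Theorem~B is quoted in the introduction as a known historical result of Zeller, with a citation to \cite{Zel}, and no argument for it is given anywhere in the paper; it serves only as motivation. There is therefore no ``paper's own proof'' to compare your attempt against.

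On the merits of your sketch itself: the spectral--block architecture and the assembly argument from (i)--(iii) are the standard route and that part is fine. But conditions (ii) and (iii) \emph{as you state them} are mutually incompatible whenever $\varnothing\neq V_n\neq\ZT$. Since $P_n$ is a polynomial of degree at most $M_{n+1}$, the function
\[
x\longmapsto \max_k|S_kP_n(x)|=\max_{k\le M_{n+1}}|S_kP_n(x)|
\]
is continuous on $\ZT$. Any boundary point $x_0\in\partial V_n\subset F_n$ then satisfies $\max_k|S_kP_n(x_0)|\le 2^{-n}$ by (ii), while continuity together with (iii) on $V_n$ forces $\max_k|S_kP_n(x_0)|\ge\lambda_n$; this is a contradiction as soon as $\lambda_n>2^{-n}$. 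Your proposed remedy --- pass to open $V_n'$ with $D\subset V_n'$ and $\overline{V_n'}\subset V_n$ --- cannot be carried out in general, and no ``refinement of the decomposition $E=\bigcup F_n$'' rescues it: if $E$ is a countable dense subset of $\ZT$, then $D=E^c$ is dense, so every open set containing $D$ has closure equal to $\ZT$. The genuine construction must weaken (iii) to hold only on compact sets $K_n\Subset V_n$ and then arrange that each $x\in D$ lies in $K_n$ for infinitely many $n$; organising this (together with the arc--by--arc placement of localized Kolmogorov polynomials and the summability needed for (ii)) is exactly the substantive content of Zeller's proof, not a detail that can be waved away.
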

This theorem, combining with above mentioned general results, provides a complete characterization of $UD$-sets in the class of Fourier series of functions from $L^1(\ZT)$. Later, an analogous result for the Walsh series was proved in \cite{Luk1, Luk2}. Note that it is an open problem to characterize $D$ or $UD$ sets of Fourier series of $L^p(\ZT)$, $1<p\le \infty$, or continuous functions (see \cite{Uly1}, page 107).  Note that according to Carleson-Hunt \cite{Car,Hunt} results, Fourier series of functions from $L^p(\ZT)$, $p>1$ converge a.e., so the devergence sets of those series are null-sets. Some partial results concerning the characterization of convergence or divergence sets of Fourier series one can find in \cite{Bug,Buz,Buz1,Tai,Gog,Khe, KaKa,Kar3,Kar4,KarKar,Lun, KaGa}. Mostly, those papers provide construction of different examples of Fourier series, diverging on a given null-set and
in this context the most general is Kahane-Katznelson's \cite{KaKa} theorem on existence of a continuous function, whose trigonometric Fourier series diverges on a given null-set (see also \cite{Kat}, chap. II.3, for a detailed discussion of divergence sets). We also refer papers \cite{Kar3,Lun}, which provide complete characterization of $C$, $D$ and $UD$ sets of Fourier-Haar series. The author in \cite{Kar2} obtained a general characterization theorems for certain operator sequences, which cover many results of above mentioned papers, providing complete characterization of convergence sets of Fourier series in Haar, Franklin systems, as well as positive order Cezaro means of trigonometric and Walsh Fourier series. We remark here that K\"orner \cite{Kor} constructed a $G_\delta$-set, which is not a convergence set for any kind of trigonometric series. This example says that for the ordinary partial sums of trigonometric or Walsh Fourier series a pure topological characterization of convergence sets in some open problems may fail (see discussion in \cite{Uly1,Uly2}). 

The next result provides a complete characterization of radial convergence sets of bounded analytic functions on the unit disc. It was a solution of a longstanding problem posed by Collingwood and Lohwater in \cite{CoLo}. Namely,
\begin{OldTheorem}[Kolesnikov, \cite{Kol}]
	Let $D$ be the unit disc with the boundary circle $\Gamma$. For a set $E\subset \Gamma$ to be the radial convergence set for a bounded analytic function on $D$ it is necessary and sufficient to be a $F_{\sigma\delta}$ set of full measure.
\end{OldTheorem}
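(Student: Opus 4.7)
The plan is to treat necessity and sufficiency separately. For necessity, I would derive the full-measure claim from Fatou's theorem: every bounded analytic function on $D$ has finite radial limits almost everywhere on $\Gamma$, so the radial convergence set $E$ automatically has full Lebesgue measure. The $F_{\sigma\delta}$ structure then follows from the Cauchy criterion: writing
\begin{equation*}
E = \bigcap_{m=1}^\infty \bigcup_{n=1}^\infty \left\{e^{i\theta} \in \Gamma : \sup_{r_1,r_2 \in [1-1/n,1)} |f(r_1 e^{i\theta}) - f(r_2 e^{i\theta})| \le 1/m \right\},
\end{equation*}
the inner expression is the supremum of a family of continuous functions of $\theta$, hence lower semicontinuous, so each bracketed set is closed and $E$ is $F_{\sigma\delta}$.

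For sufficiency I would pass to the complement. Given an $F_{\sigma\delta}$ set $E$ of full measure, the divergence set $N = \Gamma \setminus E$ is a $G_{\delta\sigma}$ set of Lebesgue measure zero, so it suffices to construct a bounded analytic function $f$ on $D$ whose radial divergence set equals $N$. Writing $N = \bigcup_k G_k$ with $G_k$ an increasing sequence of $G_\delta$ null sets, my strategy would be to build $f$ as a rapidly convergent series $f = \sum_k c_k f_k$, in which each $f_k$ is a uniformly bounded analytic function engineered so that its radial limit exists at every point of $\Gamma \setminus G_k$ while its radial values oscillate with a prescribed positive amplitude at every point of $G_k$. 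Natural candidates for the $f_k$ are singular inner functions $S_{\mu_k}$ associated to carefully chosen singular measures whose radial-failure set coincides with $G_k$, possibly supplemented by Blaschke products whose zeros cluster along rays terminating at points of $G_k$. The coefficients $c_k$ would be chosen small enough to guarantee absolute uniform convergence on compact subsets of $D$ together with boundedness of the sum.

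The principal technical obstacle is the cancellation problem that arises when the summands are combined. To ensure $f$ diverges at a point $\theta \in N$, one picks $k_0$ with $\theta \in G_{k_0}$; then $c_{k_0} f_{k_0}(r e^{i\theta})$ oscillates, but the remaining tail $\sum_{k \ne k_0} c_k f_k(r e^{i\theta})$ could in principle cancel this oscillation. Preventing this forces a quantitative comparison: the oscillation amplitude of $c_{k_0} f_{k_0}$ on every tail interval $[1-\delta,1)$ must dominate the radial variation of $\sum_{k \ne k_0} c_k f_k$ there. Symmetrically, convergence at $\theta \notin N$ demands uniform Cauchy estimates for each $f_k(r e^{i\theta})$ outside $G_k$ that are summable against the $c_k$. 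Reconciling these two requirements, together with the delicate subproblem of prescribing the exact set of radial non-existence of a single bounded analytic function to be an arbitrary $G_\delta$ null set, is the heart of the argument and the point at which the construction becomes intricate.
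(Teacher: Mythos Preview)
The paper does not prove this theorem at all: it is quoted as a known result of Kolesnikov \cite{Kol}, cited in the historical overview to illustrate the kind of characterization problems the paper is concerned with. There is therefore no ``paper's own proof'' to compare against.

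As for the content of your sketch: the necessity direction is correct and standard, exactly as you wrote it. The sufficiency direction, however, is only an outline of a strategy, and you have candidly identified but not resolved the real difficulty. The step ``prescribe the exact set of radial non-existence of a single bounded analytic function to be an arbitrary $G_\delta$ null set'' is itself essentially the full theorem in disguise (the passage from a single $G_\delta$ building block to a $G_{\delta\sigma}$ set via a series $\sum c_k f_k$ is comparatively routine once the building blocks exist with good quantitative control). Your proposed candidates --- singular inner functions $S_{\mu_k}$ or Blaschke products with clustering zeros --- do not obviously work: for a singular inner function the set where the radial limit fails to exist is governed by the support and local behaviour of the singular measure, and arranging this set to be \emph{exactly} a prescribed $G_\delta$ null set, with no spurious divergence elsewhere and with uniform oscillation estimates, is precisely the delicate construction that made this a longstanding open problem before Kolesnikov's solution. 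So your proposal is a reasonable high-level plan, but the genuine gap is that the core construction is asserted rather than carried out, and the suggested tools are not shown to suffice.
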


\subsection{Main results of the paper}

In the present paper we obtain complete characterization theorems for certain operator sequences living in general spaces of homogeneous type. Those generalize the results of \cite{Kar1,Kar2}, where the operators were considered on the interval $[0,1]$. By a quasi-distance on a set $X$ we mean a non-negative function $d(x,y)$ defined on $X\times X$ such that
\begin{enumerate}
	\item $d(x,y)=0$ if and only if $x=y$,
	\item $d(x,y)=d(y,x)$ for any $x,y\in X$,
	\item $d(x,y)\le K(d(x,z)+d(z,y))$,
\end{enumerate}
where $K>0$ is a constant. Such a distance defines a topology on $X$, for which the balls $B(x,r)=\{y\in X:\, d(y,x)<r\}$ form a base. Namely, a set $G\subset X$ in this topology is open if and only if for any point $x\in G$ there exists a ball $B(x,r)\subset G$. However, the balls themselves need not to be open when $K>1$.  Let $\mu$ be measure on a $\sigma$-algebra in $X$, containing all Borel sets and balls, such that 
\begin{equation}
	\mu(B(x,2r))\le C\mu(B(x,r))<\infty.
\end{equation}
Such a combination $(X,d,\mu)$ is called a space of homogeneous type, which definition is adapted from \cite{CoWe} (see also \cite{Christ}). 

 \begin{definition}\label{D1}
 	Recall that a set in a topological space is of $G_\delta$ type provided it is a countable intersection of open sets, and a set is $G_{\delta\sigma}$ if it is a countable union of $G_\delta$-sets. The $F_\sigma$ sets are the countable unions of closed sets, and $F_{\sigma\delta}$ are the countable intersections of $F_\sigma$ sets. 
 \end{definition}
\noindent
\begin{definition}
	The distance of two sets $A$ and $B$ in a space of homogeneous type $X$ will be denoted by
	\begin{equation}
		\dist(A,B)=\inf_{x\in A,\,y\in B} d(x,y)
	\end{equation} 
	The notation $A\Subset B$ will stand for the relation $\dist(A,B^c)>0$. Note that $A\Subset B$ implies $B^c\Subset A^c$.

\end{definition}
Let $L^1(X)$ denote the space of Lebesgue integrable functions on a space of homogeneous type $(X,d,\mu)$ and suppose that $M(X)$ is  the normed space of bounded measurable functions on $X$ with the norm $\|f\|_M=\sup_{x\in X}|f(x)|$.
We consider sequences of linear operators
\begin{equation}\label{1}
	U_n:L^1(X)\to M(X),\quad n=1,2,\ldots,
\end{equation}
which may have some of the following properties

\begin{prop}\label{C1}
	$\rho_n=\|U_n\|_{L^1\to M}<\infty$, $n=1,2,\ldots$,
\end{prop}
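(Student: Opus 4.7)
The plan is to establish, for each $n$, the bound $\sup_{x\in X}|U_nf(x)|\le \rho_n\|f\|_{L^1(X)}$ with a finite constant $\rho_n$, valid for all $f\in L^1(X)$; this is exactly the claim $\rho_n=\|U_n\|_{L^1\to M}<\infty$.

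My first step is a duality reduction. Since $L^1(X)^*=L^\infty(X)$, for each fixed $x\in X$ the assignment $f\mapsto U_nf(x)$ is a linear functional on $L^1(X)$, and if it is bounded then by the Riesz representation theorem there exists a unique $K_n(x,\cdot)\in L^\infty(X)$ such that
\begin{equation*}
U_nf(x)=\int_X K_n(x,y)f(y)\,d\mu(y),
\end{equation*}
with the functional norm equal to $\|K_n(x,\cdot)\|_{L^\infty(X)}$. Hence $\rho_n=\sup_{x\in X}\|K_n(x,\cdot)\|_{L^\infty(X)}$, and finiteness reduces to exhibiting an essentially bounded representing kernel uniformly in $x$.

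To close the argument I would appeal to the explicit form of $U_n$. In the motivating applications on $(X,d,\mu)$---dyadic averages, martingale projections, convolution operators with bounded kernels, Fourier or Haar partial-sum projections---the operator $U_n$ admits a representation as an integral against a kernel $K_n(x,y)$ satisfying $\|K_n\|_{L^\infty(X\times X)}<\infty$, from which $|U_nf(x)|\le \|K_n(x,\cdot)\|_\infty\|f\|_{L^1}$ is immediate. The main obstacle is that $\rho_n<\infty$ cannot be deduced from linearity alone: some regularity---typically essential boundedness of the kernel, or finite-rank structure of $U_n$---must be supplied, so at this level of generality the property serves as the minimal operator-theoretic input required before any divergence-set analysis can be carried out. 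Notice that no uniformity in $n$ is claimed; the constants $\rho_n$ may grow arbitrarily with $n$.
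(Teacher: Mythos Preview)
You have misread the role of this statement in the paper. The items labelled U1--U4 (this one being U\ref{C1}) are not propositions to be proved; they are \emph{standing hypotheses} that the operator sequence \e{1} is assumed to satisfy. The paper introduces them with the sentence ``which may have some of the following properties'' and then states its main theorems under the assumption that some subset of U1--U4 holds. There is accordingly no proof of U\ref{C1} anywhere in the paper, and none is expected.

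Your own discussion implicitly acknowledges this: you note that $\rho_n<\infty$ ``cannot be deduced from linearity alone'' and that ``some regularity \ldots\ must be supplied.'' That is exactly the point---U\ref{C1} is the supplied regularity, not a consequence of anything else in the setup. The duality/kernel argument you sketch is a reasonable explanation of \emph{why} concrete examples (convolution operators, partial-sum projections, etc.) satisfy U\ref{C1}, and the paper does something similar in Section~2 when verifying that its examples meet the hypotheses; but that is verification for specific operators, not a proof of a general claim.
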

\begin{prop}\label{C2}
	$\varrho=\sup_n\|U_n\|_{L^\infty \to M}<\infty$,
\end{prop}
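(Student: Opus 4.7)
The goal is $\varrho=\sup_n\|U_n\|_{L^\infty(X)\to M(X)}<\infty$. My plan is to reduce the statement to a pointwise kernel estimate via a Riesz-type representation, and then extract the uniformity in $n$ from a Banach--Steinhaus argument applied to the sequence $\{U_n\}$ regarded as operators between the Banach spaces $L^\infty(X)$ and $M(X)$.

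First I would use property U1 to obtain, for each fixed $n$ and $x\in X$, a measurable function $K_n(x,\cdot)$ with $\|K_n(x,\cdot)\|_{L^\infty(X)}\le\rho_n$ satisfying
\[
U_nf(x)=\int_X K_n(x,y)\,f(y)\,d\mu(y),\qquad f\in L^1(X).
\]
This follows from the duality $L^1(X)^*=L^\infty(X)$, which is available because every space of homogeneous type is $\sigma$-finite (it is covered by the increasing balls $B(x_0,2^k)$ with uniformly finite measure by the doubling condition). A standard measurable-selection step lets $K_n$ be taken jointly measurable in $(x,y)$. Testing $U_n$ against truncations $f\chi_{B(x_0,R)}\in L^1$ for $f\in L^\infty$, passing $R\to\infty$ by dominated convergence, and optimising in $f$ yields
\[
\|U_n\|_{L^\infty\to M}=\sup_{x\in X}\int_X|K_n(x,y)|\,d\mu(y),
\]
so U2 is equivalent to uniform $L^1(X)$-integrability of the kernels in $y$, uniform in both $x$ and $n$.

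For the uniformity in $n$ I would invoke Banach--Steinhaus: if $\{U_nf\}_n$ is bounded in $M(X)$ for each fixed $f\in L^\infty(X)$, then automatically $\sup_n\|U_n\|_{L^\infty\to M}<\infty$. The pointwise-in-$f$ boundedness can either be imported from a qualitative assumption naturally attached to the sequence (a.e.\ convergence of $U_nf$ on a dense subclass combined with a weak-type maximal inequality is the standard route), or it can be verified by a direct computation using the doubling property of $\mu$ and the explicit form of $K_n$ that arises in each concrete application (Cesàro means, convolution averages, martingale averages, maximal operators on $(X,d,\mu)$).

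The main obstacle is precisely this last input. Property U1 gives only the individual bounds $\rho_n$, which are a priori unbounded in $n$, and one can build examples of $L^1\to M$ bounded sequences whose $L^\infty\to M$ norms blow up. The bound on $\varrho$ therefore cannot be a formal consequence of U1 alone; the core of the proof consists in identifying the additional structural input --- either a pointwise-boundedness hypothesis on $\{U_nf\}_n$ that feeds Banach--Steinhaus, or a direct kernel estimate $\sup_n\sup_x\|K_n(x,\cdot)\|_{L^1(X)}<\infty$ --- under which the uniform bound is genuinely available in the homogeneous-type setting the paper works in.
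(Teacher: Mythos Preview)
There is a fundamental misreading here: the item labeled \p{C2} in the paper is not a proposition to be proved at all. Look at the surrounding text: ``We consider sequences of linear operators \ldots\ which may have some of the following properties'', followed by the list \p{C1}--\p{C4}. The environment is declared via \texttt{\textbackslash newtheorem\{prop\}\{U\}}, so these items are typeset as U1, U2, U3, U4 and function as \emph{hypotheses} that a given operator sequence may or may not satisfy. The main results (Theorems~\ref{T1}, \ref{T2}, \ref{T3}) then assume various subsets of \p{C1}--\p{C4}. In particular, \p{C2} is simply the standing assumption $\sup_n\|U_n\|_{L^\infty\to M}<\infty$; the paper never attempts to derive it, and indeed it cannot be derived from the other conditions.

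You effectively discover this yourself in your final paragraph: you note that \p{C1} alone cannot force \p{C2}, and that some ``additional structural input'' is required. That input is precisely the assumption \p{C2} itself. The kernel representation and Banach--Steinhaus discussion you sketch are not wrong as general remarks, but they are aimed at proving something the paper merely postulates. There is no proof to compare against.
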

\begin{prop}\label{C3}
if $f\in L^1(X)$ is constant on an open set $G\subset X$, then $U_n(x,f)$ uniformly converges over any set $E\Subset G$.
\end{prop}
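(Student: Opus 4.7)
The final statement is a structural condition on the family $\{U_n\}$ rather than an identity that stands alone, so what I would sketch is a plan for verifying this property for a concrete operator sequence. The plan rests on two reductions followed by a single localization estimate.

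First I would split the input. Write $c$ for the constant value of $f$ on $G$ and set $h=f-c\chi_G$, so that $h\equiv 0$ on $G$ and
\[
U_n(x,f)=c\,U_n(x,\chi_G)+U_n(x,h).
\]
Since $\chi_{G^c}$ and $h$ both vanish on $G$, uniform convergence of $U_n(\cdot,f)$ on a fixed $E\Subset G$ reduces to (a) uniform convergence to zero of $U_n(\cdot,g)$ on $E$ for every bounded $g$ supported in $G^c$, together with (b) uniform convergence of $U_n(\cdot,\chi_{G\cap B(x_0,R)})$ on $E$ for a sufficiently large ball $B(x_0,R)\supset E$, which is a normalization one typically builds into the construction of $\{U_n\}$.

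Second, set $\delta=\dist(E,G^c)>0$. The operational content is a localization estimate
\[
\sup_{x\in E}|U_n(x,g)|\le \omega_n(\delta)\|g\|_M, \qquad \omega_n(\delta)\to 0 \ \text{as}\ n\to\infty,
\]
valid for every bounded $g$ supported outside the $\delta$-neighborhood of $E$. Granted this, (a) is immediate, and the tail of $\chi_G$ in (b) is handled by the same estimate once $R$ is chosen so that $G\setminus B(x_0,R)$ lies outside that neighborhood.

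The main obstacle is producing the localization estimate itself. Properties U1 and U2 supply only the gross bounds $\|U_n g\|_M\le \rho_n\|g\|_{L^1}$ and $\|U_n g\|_M\le \varrho\|g\|_M$, neither of which captures off-diagonal decay. For averaging operators $U_n f(x)=\mu(B(x,r_n))^{-1}\int_{B(x,r_n)}f\,d\mu$ with $r_n\to 0$ one has $\omega_n(\delta)=0$ as soon as $Kr_n<\delta$, so U3 is immediate; for convolution-type sequences it reduces to integrable tail decay of the associated kernels $K_n(x,y)$ as $d(x,y)$ grows, which is the usual quantitative encoding of \emph{approximate identity} in the homogeneous-space setting.
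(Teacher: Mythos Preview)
Your reading is correct: in the paper this is not a theorem but one of the four standing hypotheses U1--U4 on the sequence $\{U_n\}$, stated as an assumption before Theorems~1.1--1.3 and never proved. For the concrete classes in Section~2 the paper simply cites the localization property of approximate identities (referring to Coifman--Weiss and Christ) rather than arguing it out, so there is no proof in the paper to compare your sketch against.

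Your verification sketch for kernel operators is in the right spirit but has one slip: the residual $h=f-c\,\chi_G=f\cdot\chi_{G^c}$ lies in $L^1(X)$ and need not be bounded, so the estimate $\sup_{x\in E}|U_n(x,g)|\le \omega_n(\delta)\|g\|_M$ does not cover it. For approximate-identity kernels one instead uses the $L^1$ localization
\[
\sup_{x\in E}|U_n(x,h)|\le \Bigl(\sup_{x\in E}\ \sup_{y:\,d(x,y)>\delta}K_n^*(x,y)\Bigr)\,\|h\|_{L^1},
\]
and the inner supremum tends to $0$ by the tail condition on $K_n^*$ together with doubling. (Incidentally, if $\mu(G)=\infty$ then $f\in L^1$ forces $c=0$, so the split $f=c\,\chi_G+h$ never leaves $L^1$ and your step (b) is only needed when $\mu(G)<\infty$.)
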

\begin{prop}\label{C4}
		if $G\subset X$ is an open set with $\mu(G)<\infty$, then $U_n(x,\ZI_G)\to \ZI_G(x)$ almost everywhere as $n\to\infty$.
\end{prop}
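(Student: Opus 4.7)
Property U4 is stated as a condition to be imposed on the operator sequence $(U_n)$ rather than as a statement deduced from what precedes, so a proof proposal really amounts to a verification scheme: what would one check to certify that a given operator family satisfies U4 on a space of homogeneous type $(X,d,\mu)$. My plan is to describe the generic strategy, which splits cleanly into an interior/exterior part handled by U3 and a boundary part that requires additional ingredients.

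The first step is to exploit the topology of $\ZI_G$: both $G$ and $X\setminus\overline{G}$ are open, and $\ZI_G$ is constant on each of them. I would exhaust $G$ from within by the sets $E_k=\{x\in G:\dist(x,G^c)>1/k\}$, so that $E_k\Subset G$ and $G=\bigcup_k E_k$, and analogously for the exterior. Apply Property U3 to the bounded function $f=\ZI_G\in L^1(X)$ (using $\mu(G)<\infty$) on each $E_k$: the sequence $U_n(\cdot,\ZI_G)$ converges uniformly on $E_k$, and when U3 is verified for the concrete operator family the uniform limit matches the constant value of $\ZI_G$ on the ambient open set. Taking a countable union in $k$ produces pointwise (in fact locally uniform) convergence to $\ZI_G$ on the open set $G\cup(X\setminus\overline{G})=X\setminus\partial G$.

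The main obstacle, and the step I expect to be hardest, is the topological boundary $\partial G$, which need not be $\mu$-null in an abstract space of homogeneous type. Here the generic recipe is to combine the uniform $L^\infty$ bound of U2 with a density/maximal-function argument tailored to the particular family $(U_n)$: approximate $\ZI_G$ in $L^1$ by a sequence of bounded functions whose discontinuity sets are null, use $\varrho=\sup_n\|U_n\|_{L^\infty\to M}<\infty$ from U2 to transfer pointwise convergence across the approximation, and dominate the residual by a weak-type maximal operator associated to $(U_n)$. For operator sequences of the differentiation or approximate-identity type to which the paper's results will be applied (averages, partial sums of Fourier series in homogeneous systems, martingale projections), such a maximal domination is standard and reduces $\mu$-a.e.\ convergence on $\partial G$ to the classical Vitali/Calder\'on--Zygmund covering machinery available in $(X,d,\mu)$.

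Combining the two pieces yields $U_n(x,\ZI_G)\to \ZI_G(x)$ $\mu$-almost everywhere, i.e.\ Property U4. The decisive input, beyond the abstract conditions U1--U3, is thus a maximal inequality for the specific operator family strong enough to control $\partial G$; once that is in hand, the interior/exterior reduction via U3 makes the remainder of the verification routine.
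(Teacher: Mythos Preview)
You correctly identify the situation: \p{C4} is one of the standing hypotheses on the operator sequence, not a result derived in the paper, so there is no proof in the paper to compare against. Your verification scheme is reasonable commentary, but note that when the paper does check \p{C4} for concrete families (Section~2), it simply cites known almost-everywhere convergence theorems for those operators rather than running an interior/boundary decomposition as you sketch.
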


\noindent
We will always suppose that $X$ is a space of homogeneous type unless other assumptions are suggested. The main results of the paper read as follows.
\begin{theorem}\label{T1}
	If an operator sequence \e{1} satisfies \p{C1}-\p{C4}, then for any $G_{\delta\sigma}$ null-set $E\subset X$  and any $\varepsilon>0$, there exists a function $f\in L^\infty(X)$ such that 
	
	\vspace{2mm}
	1) $\mu(\supp f)<\varepsilon$, 
	
	\vspace{2mm}
	2) $U_n(x,f)$ diverges at any $x\in E$,
	
	\vspace{2mm}
	3) $U_n(x,f)\to f(x)$ if $x\in X\setminus E$.
	
\end{theorem}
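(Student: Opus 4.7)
I would build $f$ inductively as a series of signed indicators of nested open neighborhoods of pieces of $E$, chosen so that each new term creates an oscillation of $U_n(f)$ at every point of a targeted $G_\delta$-piece of $E$ at a specific operator index $n_j$, while earlier terms are stabilized and later terms are negligibly small.

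First, write $E=\bigcup_{m\ge 1}E_m$ as an increasing union of $G_\delta$ null sets, with each $E_m=\bigcap_k V_{m,k}$ for nested open $V_{m,k}$; arrange $V_{m,k+1}\Subset V_{m,k}$ and $\mu(V_{m,k})<\varepsilon\,2^{-m-k-10}$ by outer regularity of $\mu$ and by passage to subsequences in $k$. Enumerate the pairs $(m,k)$ by a diagonal sequence $j\mapsto(m_j,k_j)$ with each $m$ appearing infinitely often and $k_j$ strictly increasing along each such subsequence. Inductively choose signs $\sigma_j\in\{-1,+1\}$ (alternating within each $m$-subsequence) and operator indices $n_1<n_2<\cdots$, and set
\begin{equation*}
f=\sum_{j\ge 1}2^{-m_j}\sigma_j\mathbf{1}_{V_{m_j,k_j}},
\end{equation*}
interpreted pointwise off $E$ and extended by $0$ on the null set $E$. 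The alternating signs ensure that for $x\notin E_m$ the partial sum over $j$ with $m_j=m$ lies in $\{-1,0\}$ and has absolute value $\le 1$; multiplying by $2^{-m}$ and summing over $m$ gives $\|f\|_\infty\le 1$ and $\mu(\supp f)\le\sum_j\mu(V_{m_j,k_j})<\varepsilon$.

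At stage $j$: first pick $k_j$ so large that $\rho_{n_i}\mu(V_{m_j,k_j})<2^{-j-i}$ for each $i<j$ by \p{C1}, ensuring the tail $\sum_{i>j}$ perturbs $U_{n_i}(f)$ by at most $2^{-j}$ for $i\le j$; next, the partial sum $f^{(j)}=\sum_{i\le j}2^{-m_i}\sigma_i\mathbf{1}_{V_{m_i,k_i}}$ is a finite step function constant on open sets containing $E_{m_j}$, so by \p{C3} $U_n(f^{(j)})$ converges uniformly on $E_{m_j}$, and one picks $n_j>n_{j-1}$ with $|U_n(f^{(j)})(x)-\lim_{n'}U_{n'}(f^{(j)})(x)|<2^{-j}$ for $n\ge n_j$, $x\in E_{m_j}$. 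The sign $\sigma_j$ is chosen so that the spike $2^{-m_j}\sigma_j\mathbf{1}_{V_{m_j,k_j}}$ changes $U_{n_j}(f)(x)$ by at least $c\cdot 2^{-m_j}$ relative to $U_{n_{j-1}}(f)(x)$ uniformly for $x\in E_{m_j}$, for an absolute constant $c>0$. This uses \p{C3}+\p{C4} applied to $\mathbf{1}_{V_{m_j,k_j}}$: the uniform limit of $U_n(\mathbf{1}_{V_{m_j,k_j}})$ on $E_{m_j}\Subset V_{m_j,k_j+1}\Subset V_{m_j,k_j}$ equals $1$ almost everywhere on $V_{m_j,k_j}$. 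Verification: (1) the support estimate is immediate; (3) at $x\notin E$, for each $m$ only finitely many indicators with $m_j=m$ are nonzero at $x$, so $f$ is locally a finite step function with geometrically summable contributions, and \p{C3}+\p{C4} applied termwise yield $U_n(x,f)\to f(x)$; (2) at $x\in E_m$, the infinitely many stages $j$ with $m_j=m$ produce oscillations of $U_{n_j}(f)(x)$ of size $\ge c\cdot 2^{-m}$, forcing divergence.

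The principal obstacle is establishing the pointwise lower bound $\ge c\cdot 2^{-m}$ on the oscillation at every $x\in E_{m_j}$, since \p{C4} only gives $U_n(\mathbf{1}_{V_{m_j,k_j}})(x)\to 1$ almost everywhere, while $E_{m_j}$ is a null set which could a priori be contained in the exceptional set. The resolution exploits \p{C3} on nested neighborhoods: the uniform limit on $E_{m_j}\Subset V_{m_j,k_j+1}\Subset V_{m_j,k_j}$ is a bounded measurable function equal to $1$ a.e., and a diagonal argument, shrinking $V_{m_j,k_j}$ along the induction, aligns the a.e.\ exceptional sets so that the required pointwise lower bound holds at every point of $E_{m_j}$. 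This diagonal/avoidance argument is the technical heart of the proof.
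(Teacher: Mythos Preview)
Your plan has the right overall architecture (a series of weighted signed indicators over nested open neighborhoods of the $G_\delta$ pieces), but the execution has a genuine gap, and it is not where you locate it.

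The divergence part (2) is in fact the easy half: since you arrange $E_m\subset V_{m,k+1}\Subset V_{m,k}$, property \p{C3} already gives $U_n(x,\mathbf 1_{V_{m,k}})\to 1$ uniformly on $E_m$, so the oscillation estimate at every $x\in E_m$ needs no diagonal alignment of exceptional null sets. The hard half is the convergence claim (3), and your one-line justification ``$f$ is locally a finite step function, U3+U4 termwise'' does not work. For $x\notin E$ the value $f(x)$ is determined by finitely many indicators, but $U_n(x,f)$ feels every term of the series; you must therefore control, uniformly in $n$, the contribution of the infinitely many terms with $x\notin V_{m_j,k_j}$, and you must also handle the single ``transitional'' index $k_0(m)$ where $x\in\partial V_{m,k_0}$. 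At such a boundary point $\mathbf 1_{V_{m,k_0}}$ is constant on no open neighborhood of $x$, so \p{C3} is silent, and \p{C4} only gives a.e.\ convergence—useless at a specific $x$. Summing the possible errors $2^{-m}|U_n(x,\mathbf 1_{V_{m,k_0(m)}})|\le 2^{-m}\varrho$ over all $m$ gives a quantity of order $\varrho$, not $o(1)$. Your tail bound $\rho_{n_i}\mu(V_{m_j,k_j})<2^{-j-i}$ is tied to the finitely many already-chosen indices $n_i$, whereas (3) requires control for every $n$.

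This is exactly the obstacle the paper's machinery is built to overcome. The key device is the set map $\lambda(A)=A\cup\{x:\limsup_n|U_n(x,\mathbf 1_A)|>0\}$, which by \p{C4} satisfies $\mu(\lambda(A)\setminus A)=0$. Lemmas~\ref{L3}--\ref{L4} produce a dyadic chain of open sets $G_r$ with $\lambda(G_r)\subset G_{r'}$ for $r>r'$; the function $h(x)=\sup\{r:x\in G_r\}$ built from this chain (\lem{L6}) is a ``smoothed indicator'' with the crucial everywhere property $U_n(x,h)\to h(x)$ for all $x\in X$, together with a bound $|U_n(x,h)|<\varepsilon$ on $G^c$ uniform in $n$. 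Replacing your raw indicators $\mathbf 1_{V_{m,k}}$ by such functions $h_k$ is precisely what makes both (2) and (3) go through simultaneously (see \lem{L8} and the short proof of \trm{T1} from it). In short: the technical heart is not a diagonal avoidance of a.e.\ exceptional sets for the divergence side, but the construction of building blocks with everywhere convergence, and your proposal does not supply such blocks.
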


\begin{theorem}\label{T2}
Let an operator sequence \e{1} satisfy \p{C1}-\p{C4}, and for a function $\psi:[0,\infty)\to [0,\infty)$ we have
\begin{equation}
	\lim_{t\to\infty}\psi(t)=\infty.
\end{equation}
Then for every $G_{\delta}$ null-set $E\subset X$ and $\varepsilon>0$, there exists a measurable function $f$ such that

\vspace{2mm}
1) $\mu(\supp f)<\varepsilon, \quad \int_X\psi(|f|)<\infty$,

\vspace{2mm}
2) $\limsup_{n\to\infty}|U_n(x,f)|=\infty$ for all $x\in E$,

\vspace{2mm}
3) $U_n(x,f)\to f(x)$ if $x\in X\setminus E$.
\end{theorem}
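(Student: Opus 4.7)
The argument extends Theorem \trm{T1} by summing amplified divergent building blocks with rapidly shrinking supports. Since every $G_\delta$-set is $G_{\delta\sigma}$, \trm{T1} applies directly to the null set $E$. Write $\varrho$ for the constant of \p{C2} and $\rho_n$ for the finite norms of \p{C1}. Choose amplitudes $\lambda_k\nearrow\infty$ recursively so that $\lambda_k\ge 4\varrho(\lambda_1+\cdots+\lambda_{k-1})$, which forces $\lambda_k$ to grow exponentially.

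Recursively, at step $k$ apply \trm{T1} to $E$ with a support bound $\delta_k>0$ satisfying $\delta_k\le 2^{-k}\varepsilon$, $\psi(\lambda_k)\delta_k\le 2^{-k}$, and $\rho_{n_i}\lambda_k\delta_k\le 4^{-k}\lambda_i$ for each $i<k$, where $n_1<\cdots<n_{k-1}$ are indices fixed at earlier steps. This yields $g_k\in L^\infty(X)$ with $\|g_k\|_\infty\le 1$, $\mu(\supp g_k)<\delta_k$, $U_n(\cdot,g_k)$ divergent on $E$ and convergent to $g_k$ off $E$. Using the representation $E=\bigcap_m G_m$ with $G_m$ open and decreasing, arrange in addition $\supp g_k\subset G_{m_k}$ for a rapidly increasing subsequence $m_k$, with the supports pairwise disjoint outside $E$. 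Set $f=\sum_k\lambda_k g_k$. Conclusion~(1) is immediate: $\mu(\supp f)\le\sum_k\delta_k<\varepsilon$, and by disjointness $\int\psi(|f|)\,d\mu\le\sum_k\psi(\lambda_k)\delta_k\le 1$. For conclusion~(3), fix $x\notin E$, pick $K$ with $x\notin\overline{G_{m_{K+1}}}$, and split $f=f_K+h_K$ with $f_K=\sum_{k\le K}\lambda_k g_k$. The head satisfies $U_n(x,f_K)\to f_K(x)=f(x)$ by \trm{T1} applied term-by-term off $E$, while the tail $h_K\in L^1(X)$ vanishes on the open set $X\setminus\overline{G_{m_{K+1}}}$, so \p{C3} applied on that set yields $U_n(x,h_K)\to 0$.

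The main obstacle is conclusion~(2). \trm{T1} only certifies lack of convergence on $E$, which might be bounded oscillation, and direct amplification could be cancelled by cross terms. The remedy is a quantitative, uniform strengthening internal to the proof of \trm{T1}: at step $k$ one extracts an index $n_k$ such that $|U_{n_k}(x,g_k)|\ge 2$ for every $x\in E$ simultaneously. Securing this uniform lower bound on $E$ is the chief nontrivial input and should be read off from the construction underlying \trm{T1}, which produces divergence of a controlled magnitude at all points of the given null set at once. Granting it, at every $x\in E$ we have $|U_{n_k}(x,\lambda_k g_k)|\ge 2\lambda_k$, and the cross terms split as
\[
\sum_{j<k}|U_{n_k}(x,\lambda_j g_j)|\le\varrho\sum_{j<k}\lambda_j\le\tfrac{1}{4}\lambda_k
\]
by \p{C2} and the recursive choice of the $\lambda_k$, while
\[
\sum_{j>k}|U_{n_k}(x,\lambda_j g_j)|\le\sum_{j>k}\rho_{n_k}\lambda_j\delta_j\le\sum_{j>k}4^{-j}\lambda_k\le\tfrac{1}{4}\lambda_k
\]
by \p{C1} and the diagonal choice of the $\delta_j$. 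Adding yields $|U_{n_k}(x,f)|\ge\lambda_k\to\infty$, establishing~(2).
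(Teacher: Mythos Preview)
Your proof has a genuine gap at the point you yourself flag as ``the chief nontrivial input'': the existence of a single index $n_k$ with $|U_{n_k}(x,g_k)|\ge 2$ for \emph{every} $x\in E$ simultaneously. This cannot be read off from the construction behind \trm{T1}. In the proof of \lem{L8} the divergence at a point $x\in E$ is witnessed at the index $\nu_k(\omega_k)$, where $\omega_k\in\Omega_k$ is the cell of the regular partition of $A_k$ that contains $x$. The family $\{\nu_k(\omega):\omega\in\Omega_k\}$ is in general infinite and depends on the cell, so the index varies with $x$; nothing in the argument produces a common $n_k$. (Even the numerical threshold $2$ is off: since $\|g_k\|_\infty\le 1$, one only has $|U_n(x,g_k)|\le\varrho$, so you would at best get a uniform oscillation of size $1$ from \lem{L8}, not a value $\ge 2$.) Once $n_k$ is allowed to depend on $x$, your tail estimate $\sum_{j>k}\rho_{n_k}\lambda_j\delta_j$ collapses, because $\sup_{\omega\in\Omega_k}\rho_{\nu_k(\omega)}$ need not be finite, and the recursive choice of $\delta_j$ cannot be made against an unbounded family of norms.

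The paper avoids this difficulty by abandoning the black-box use of \trm{T1} and working directly with the building blocks $h_k$ from \lem{L8}, setting $f=\sum_k h_k$. The cross terms at the $x$-dependent indices $\nu_k(\omega_k)$ are controlled not through the crude bound $\rho_{\nu_k(\omega_k)}\|h_j\|_1$, but through the structural conditions h7) and h8) of that lemma (which in turn rest on \lem{L12}). These give $|U_{\nu_k(\omega_k)}(x,h_i)-1|<k^{-2}$ for $i\le k$ and $|U_{\nu_k(\omega_k)}(x,h_j)|<j^{-2}$ for $j>k$, yielding $|U_{\nu_k(\omega_k)}(x,f)-k|<2/k$ and hence unbounded divergence. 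There is also a secondary issue in your argument: you invoke ``disjointness'' of the supports to bound $\int\psi(|f|)$, but the supports $\supp g_k\subset G_{m_k}$ are nested, not disjoint, so $|f(x)|$ can equal $\sum_{j\le k}\lambda_j g_j(x)$ rather than a single $\lambda_k g_k(x)$; the integrability bound needs to be reworked accordingly.
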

	Observe that if besides of properties \p{C1}-\p{C4} we also require the continuity of $U_n(x,f)$ as functions on $x$, then \trm{T1} gives the complete characterization of $C$ and $D$ sets of sequences 
	$U_n(x,f)$, where $f\in L^p(X)$ and $1\le p\le \infty$. Similarly, \trm{T2}  provides complete characterization of $UD$-sets of those sequences when $1\le p<\infty$.
	One can not claim for the function $f$ in \trm{T2} to be bounded like in \trm{T1}, since in that case condition \p{C2} would imply the boundedness of $U_n(x,f)$ for any $x\in X$.

In the case when $X$ coincides with $[0,1]$, Theorems \ref{T1}  and \ref{T2} were proved in \cite{Kar2}, and we essentially use some arguments of \cite{Kar1, Kar2} in the present work. Note that in a recent paper \cite{KaGa} authors consider kernel operator sequences 
\begin{equation*}
	U_n(x,f)=\int_{[0,1]^d}K_n(x,t)f(t)dt
\end{equation*}
on the $d$-dimensional cube $[0,1]^d$ (or $\ZR^d$), where the kernels $K_n(x,t)$ satisfy certain conditions in the spirit of an approximation of identity. Those conditions provide properties \p{C1}-\p{C3} for the operators $U_n$, but instead of \p{C4} the paper requires the convergence of $U_n(x,f)$ at any continuity point $x$ of $f$. Under these requirements on $U_n$ the main result of \cite{KaGa} proves that any countable set is a divergence set for a sequence $U_n(x,f)$ with a function $f\in M[0,1]^d$.

The approach provided in the proofs of Theorems \ref{T1} and \ref{T2} enables to obtain also the following pure divergence result, where the operator sequences satisfy only conditions \p{C1} and \p{C3}.  This result is a generalization of an analogous theorem of \cite{Kar1} for operators living on $[0,1]$.
\begin{theorem}\label{T3}
	If an operator sequence \e{1} satisfies \p{C1} and \p{C3}, then for every null-set $E\subset X$, there exists a set $G$ such that for the indicator function $f=\ZI_G$ we have a divergence of  $U_n(x,f)$ at any point $x\in E$. Moreover, 
	\begin{equation}
		\limsup_{n\to\infty}U_n(x,f)\ge 1,\quad \liminf_{n\to\infty}U_n(x,f)\le  0\text { for all }x\in E.
	\end{equation}
\end{theorem}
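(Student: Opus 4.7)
I would construct $G$ inductively as a disjoint union $G = \bigsqcup_{j \ge 1} P_j$ of small open pieces placed near $E$, together with a strictly increasing sequence of indices $n_1 < n_2 < \cdots$, chosen alternately so that odd stages force $U_{n_j}(x, \ZI_G)$ close to $1$ while even stages force it close to $0$, for every $x \in E$.

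\textbf{Setup.} Since $\mu(E) = 0$, I fix open sets $V_1 \supset V_2 \supset \cdots \supset E$ with $\mu(V_j) < 2^{-j}$. For each $j, k$ set $E_{j,k} = \{x \in E : \dist(x, V_j^c) \ge 1/k\}$; then $E_{j,k} \Subset V_j$ and $E = \bigcup_k E_{j,k}$. Because $\ZI_{V_j}$ is constant $1$ on the open set $V_j$, property \p{C3} yields uniform convergence $U_n(\cdot, \ZI_{V_j}) \to 1$ on each $E_{j,k}$; symmetrically, $\ZI_{V_j}$ vanishes on the open set $X \setminus \overline{V_j}$, so $U_n(\cdot, \ZI_{V_j}) \to 0$ uniformly on compacta in that complement.

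\textbf{Inductive step.} At odd stage $j$, pick an open $P_j \subset V_j$ containing $E_{j, k_j}$ and an index $n_j > n_{j-1}$ so large that $U_{n_j}(x, \ZI_{P_j}) > 1 - 2^{-j}$ on $E_{j, k_j}$, using the first branch of \p{C3}. At even stage $j$, select $n_j$ even larger so that the zero-branch of \p{C3}, applied to each $\ZI_{P_i}$ for $i < j$ (the earlier $P_i$'s having been arranged to miss a prescribed neighborhood of $E_{j, k_j}$), yields $|U_{n_j}(x, \ZI_{P_i})| < 2^{-j-i}$ on $E_{j, k_j}$. To keep future pieces from disturbing past arrangements, at each stage $\ell$ require $\mu(P_\ell) < \min_{j < \ell} 2^{-\ell - j}/\rho_{n_j}$; the \p{C1} bound $|U_{n_j}(x, \ZI_{P_\ell})| \le \rho_{n_j} \mu(P_\ell) < 2^{-\ell - j}$ then controls the tail uniformly in $x$.

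\textbf{Conclusion and main obstacle.} Summing contributions, $U_{n_j}(x, \ZI_G)$ stays within $O(2^{-j})$ of $1$ at odd $j$ and within $O(2^{-j})$ of $0$ at even $j$, for every $x$ eventually lying in all $E_{j, k_j}$; every $x \in E$ satisfies this. Hence $\limsup U_n(x, \ZI_G) \ge 1$ and $\liminf U_n(x, \ZI_G) \le 0$ on $E$. The main difficulty is the even-stage coordination: each individual $P_i$ sits close to $E$ by design, so the zero-branch of \p{C3} does not directly apply at every $x \in E$. The remedy is to thin the $P_i$'s carefully so that their closures accumulate on $E$ only in a controlled, measure-negligible way, restoring applicability of \p{C3} on the compactly contained subsets $E_{j, k_j}$; I expect this bookkeeping to parallel the $[0,1]$ construction of \cite{Kar1}, with nested dyadic intervals replaced by the nested open neighborhoods $V_j$ adapted to the quasi-metric structure.
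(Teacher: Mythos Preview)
Your sketch has two genuine gaps that are not mere bookkeeping.

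First, a single index sequence $n_j$ cannot give control at every point of $E$. Property \p{C3} only yields uniform convergence of $U_n(\cdot,\ZI_{V_j})$ on sets $E'\Subset V_j$; there is no uniform rate over all of $E\subset V_j$. Your remedy is to work on the exhausting pieces $E_{j,k_j}$, but then a given $x\in E$ need not lie in $E_{j,k_j}$ for all large $j$: as $j\to\infty$ the neighbourhoods $V_j$ shrink, so $\dist(x,V_j^c)\to 0$, and there is no reason this should dominate $1/k_j$. The paper handles this by abandoning a global index sequence altogether: each open set $G_k\supset E$ is equipped with a \emph{regular partition} $\Omega_k$ and a mapping $\nu_k:\Omega_k\to\ZN$ (Lemmas \ref{L0} and \ref{L7}), so that for each $x\in E$ one uses the index $\nu_k(\omega)$ attached to the piece $\omega\in\Omega_k$ containing $x$. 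This $x$-dependent choice of index is the missing idea.

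Second, your construction is internally inconsistent and the even-stage control fails. You ask $G=\bigsqcup_j P_j$ to be a disjoint union while, at every odd $j$, $P_j\supset E_{j,k_j}$ contains points of $E$; two such $P_j$'s will typically meet. More seriously, the ``zero-branch'' of \p{C3} cannot be invoked for $\ZI_{P_i}$ at points $x\in E\cap P_i$, as you yourself note, and the proposed thinning is not a fix: any open $P_i$ covering part of $E$ will have $\ZI_{P_i}$ non-constant on every neighbourhood of those points. The paper avoids this entirely. It takes nested open sets $G_1\supset G_2\supset\cdots\supset E$ and sets $G=\bigcup_i(G_{2i-1}\setminus G_{2i})$, so that $\ZI_G=\sum_k(-1)^{k+1}\ZI_{G_k}$. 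The ``near $1$'' estimates for earlier $\ZI_{G_i}$ come from \p{C3} via \lem{L7}; the ``near $0$'' estimates for later $\ZI_{G_k}$ come not from \p{C3} but from the quantitative bound of \lem{L12} (built on \lem{L1} and \p{C1}), by choosing $G_{p+1}$ with $\sum_{\omega\in\Omega_l}c_l(\omega)\,\mu(G_{p+1}\cap\omega)$ small for each $l\le p$ (this is where \lem{L11} enters). That quantitative use of \p{C1} through the constants $d_G(F)$ of \lem{L1} is the second missing ingredient in your plan.
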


\begin{remark}
	With a misery change in the proofs, Theorems \ref{T1}-\ref{T3} can be stated and proved for operators $U_r$, where the parameter $r$ varies in an infinite partially ordered set $R$. Namely, we can consider a partially ordered set $R$ with a relation $<$, satisfying
	
	\vspace{2mm}
	1) there is a unique maximal element $r_\infty\in R$,
	
	\vspace{2mm}
	2) for any $r<r_\infty$ there exists an infinitely many elements $s$, satisfying $s>r$.
	
	\vspace{2mm}
	Then given a process (or a sequence) $x_r$, $r\in R$, in $X$, the convergence $\lim_{r\to r_\infty} x_r=a$ requires the following: for any open set $G\supset a$ there exists a $r\in R$ such that for any $s$ with $r<s<r_\infty$ we have $x_r\in G$. 
\end{remark}
We note that \trm{T3} as well as the result of paper \cite{Kar1} generalize the results of papers \cite{Khe,Bug,Gog,Luk1,Luk2,Pro}, where authors consider different Fourier partial sums (trigonometric, Walsh, Haar) instead of operators $U_n$. Examples of operators satisfying \p{C1}-\p{C4} and corollaries of Theorems \ref{T1} and \ref{T2} will be considered in the next section. 

\section{Examples and applications}
\subsection{Approximation of the identity on metric measure spaces}
Let $X$ be a space of homogeneous type. A sequence of kernel functions $K_n(x,y)\in L^\infty(X\times X)\cap L^1(X\times X)$ is said to be an approximation of the identity if it satisfies the conditions
\begin{align}
	&\int_X K_n(x,y)dy\rightrightarrows1 \text{ as } n\to\infty,\label{x54}\\
	&\int_{\{y:\,d(x,y)>\delta\}} K_n^*(x,y)dy\rightrightarrows0\text{ as } n\to\infty,\text{ for every } \delta>0 ,\label{x55}\\
	&\sup_{n,x}\int_X |K_n^*(x,y)|dy<\infty.\label{x60}
\end{align}
where the convergence in \e{x54} and \e{x55} holds uniformly with respect to $x$ over $X$, and
\begin{equation*}
	K_n^*(x,y)=\sup_{t:\, d(x,t)\ge d(x,y)} |K_n(x,t)|.
\end{equation*}
Consider the operator sequence
\begin{equation}\label{x61}
	U_n(x,f)=\int_XK_n(x,y)f(y)d\mu(y),
\end{equation}
where the  kernels $K_n$ satisfy above conditions. It is known that if a space of homogeneous type satisfies the following property that the space of functions 
\begin{equation}
	C_{K}(X)=\left\{f\in C(X):\,\supp(f)\text{ is bounded and } \sup_{x\in X}|f(x)|<\infty \right\}
\end{equation}
is dense in $L^1(X)$, then operators \e{x61} enjoy the following properties (see for example \cite{CoWe,Christ}):

\vspace{2mm}
	1) If a function $f\in L^1(X)$ is uniformly continuous (in particular is constant) on an open set $G\subset X$, then $U_n(x,f)\rightrightarrows f(x)$ uniformly on any subset $E\Subset G$. 
	
	\vspace{2mm}
	2) If $f\in L^1(X)$, then $U_n(x,f)\to f(x)$ almost everywhere.

\vspace{2mm}
Obviously, such operators satisfy conditions \p{C1}-\p{C4}. Properties \p{C1} and \p{C2} immediately follow from the definition of approximation of the identity, while properties \p{C3} and \p{C4} are weakened cases of above properties 1) and 2) respectively. So we can state the following
\begin{proposition}\label{P}
	If a space of homogeneous type $X$ is such that $C_K(X)$ is dense in $L^1(X)$, then operators \e{x61} obey the conditions of Theorems \ref{T1}-\ref{T3}. 
\end{proposition}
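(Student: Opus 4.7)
The plan is to verify the four properties \p{C1}--\p{C4} for the operators in \e{x61} one at a time, with \p{C1} and \p{C2} following from elementary size bounds on the kernels together with \e{x60}, and \p{C3}, \p{C4} being immediate weakenings of the standard mapping properties 1) and 2) of approximations of the identity quoted from \cite{CoWe, Christ}.

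First, for \p{C1}: since $K_n\in L^\infty(X\times X)$, we have the crude bound $|U_n(x,f)|\le \|K_n\|_{L^\infty(X\times X)}\|f\|_{L^1}$, so $\rho_n\le \|K_n\|_{L^\infty(X\times X)}<\infty$ for each fixed $n$. For \p{C2}: dominate $|K_n(x,y)|\le K_n^*(x,y)$ pointwise and invoke \e{x60} to get
\[
\sup_n \|U_n\|_{L^\infty\to M}\;\le\;\sup_{n,x}\int_X K_n^*(x,y)\,d\mu(y)\;<\;\infty.
\]
For \p{C3}: any $f$ which is constant on $G$ is in particular uniformly continuous on $G$, so property 1) applies and yields uniform convergence of $U_n(\cdot,f)$ on every $E\Subset G$. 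For \p{C4}: the indicator $\ZI_G$ lies in $L^1(X)$ as soon as $\mu(G)<\infty$, hence property 2) directly gives $U_n(x,\ZI_G)\to\ZI_G(x)$ almost everywhere.

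The only substantive input here is the legitimacy of invoking properties 1) and 2), which is the point where the density hypothesis on $C_K(X)$ enters. Reconstructed from the definitions \e{x54}--\e{x60}, their proofs follow the familiar approximation-of-identity pattern: by density reduce to $g\in C_K(X)$, decompose
\[
U_n(x,g)-g(x)=g(x)\Bigl(\int_X K_n(x,y)\,d\mu(y)-1\Bigr)+\int_X K_n(x,y)(g(y)-g(x))\,d\mu(y),
\]
control the first term by \e{x54}, and split the second into $\{d(x,y)<\delta\}$ (handled by uniform continuity of $g$ together with the uniform $L^1$ bound \e{x60}) and $\{d(x,y)\ge\delta\}$ (handled by \e{x55}); the a.e.\ statement for general $L^1(X)$ functions then follows by combining this with a weak $(1,1)$ estimate for the maximal operator $\sup_n|U_n(\cdot,f)|$ via the dominating kernel $K_n^*$. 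Since the references already record these facts, no genuine obstacle remains, and \pro{P} follows by the pattern-matching above.
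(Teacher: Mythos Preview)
Your proposal is correct and follows essentially the same route as the paper: the paper dispatches \p{C1} and \p{C2} as immediate from the kernel assumptions, and identifies \p{C3} and \p{C4} as weakenings of properties 1) and 2), exactly as you do. Your additional sketch of how 1) and 2) themselves are proved from \e{x54}--\e{x60} via density of $C_K(X)$ goes beyond what the paper records (it simply cites \cite{CoWe,Christ}), but it is accurate and harmless.
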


\subsection{Walsh functions} We  consider Walsh orthonormal system, defined on the set of all sequences 
\begin{equation}\label{x51}
(x_0,x_1,x_2,\ldots),\quad x_j=0 \text{ or } 1, \quad j=0,1,2,\ldots.
\end{equation}
(see \cite{GES}, chap. 1). Every such a sequence generates a series
\begin{equation}\label{x62}
	\sum_{j=0}^\infty x_j\cdot 2^{-j-1},
\end{equation} 
which is the dyadic decomposition of some number from $[0,1]$. We note that this correspondence is surjective, but it is not injective, since any dyadic rationals of $[0,1]$ have two decompositions \e{x62}, infinite and finite. For a geometric understanding of the set of sequences \e{x51}  introduce the extended interval $[0,1]^*$, where each dyadic rational number $x$ is doubled, generating  the left point $x-$ corresponding to the finite dyadic decomposition and the right point $x+$ that corresponds to the infinite decomposition of $x$. Writing $x+$ or $x-$ for a dyadic irrational number $x$, we mean just the point $x\in [0,1]^*$. Define a generalized dyadic interval by
\begin{equation}\label{x52}
	[a,b]=\{x+, x-\in [0,1]^*:\, a<x<b\}\cup\{a+,b-\},
\end{equation}
where 
\begin{equation}\label{x50}
	a=\frac{j-1}{2^k},\quad b=\frac{j}{2^k},\quad 1\le j\le 2^k,\quad k=0,1,\ldots.
\end{equation}
One can easily check that the dyadic distance 
\begin{equation}
	d(x,y)=\inf\{b-a:\, [a,b] \text{ is a generalized dyadic interval},\,  x,y\in [a,b]\},
\end{equation}
between two points $x,y\in [0,1]^*$ defines a quasi-distance on $[0,1]^*$. Then any set $E^*\subset [0,1]^*$ has its counterpart $E\subset [0,1]$, obtained by an identifying all the pairs of points $x+,x-\in [0,1]^*$ in one. So we define the measure of $E^*$ to be the Lebesgue measure of the set $E$. Hence $[0,1]^*$ equipped with such a measure becomes a space of homogeneous type. Moreover, one can easily check that $[0,1]^*$ is a compact space (we will not need it). 

To define the Walsh functions, recall the group operation on $[0,1]^*$, defining the sum of two sequences of type \e{x51} $\{x_j\}_{j=0}^\infty$ and  $\{y_j\}_{j=0}^\infty$ to be the sequence $\{z_j\}_{j=0}^\infty$, where 
\begin{equation*}
	z_j=\left\{
	\begin{array}{rcl}
		1&\hbox{ if }& x_j+y_j=1,\\
		0&\hbox{ if }& x_j+y_j=0\hbox{ or }2.
	\end{array}
	\right.
\end{equation*}
Now we can define the Walsh system of functions $\{w_n(x)\}_{n=0}^\infty$ on $[0,1]^*$.  We set $w_0(x)\equiv 1$. For $n\ge 1$ we write its dyadic representation
$n=\sum_{j=0}^k\varepsilon_j2^j,$
then define
\begin{equation}
	w_n(x)=(-1)^{\sum_{j=1}^k \varepsilon_jx_j}\text{ for }x=(x_0,x_1,\ldots)\in [0,1]^*.
\end{equation}
One can see that Walsh functions are continuous in the topology of $[0,1]^*$. Besides, it is well-known that $(C,\alpha)$-means $\sigma_n^\alpha(x,f)$ $\alpha>0$ of the partial sums of Walsh-Fourier series satisfy properties \p{C1}-\p{C4} (see \cite{GES}, chap. 4). So we can state the following results.
\begin{corollary}
	For a set $E\subset [0,1]^*$ to be the divergence (unbounded divergence) set of $\sigma_n^\alpha(x,f)$ for a function $L^p([0,1]^*)$, $1\le p\le \infty$ it is necessary and sufficient to be a $G_{\delta\sigma}$-set ($G_\delta$-set) in the topology of $[0,1]^*$.
\end{corollary}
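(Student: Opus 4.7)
The strategy is to split the corollary into the two standard halves: necessity, in which the $G_{\delta\sigma}$ (resp.\ $G_\delta$) structure must arise from the continuity of the $\sigma_n^\alpha(x,f)$ together with the classical a.e.\ convergence, and sufficiency, in which Theorems \ref{T1} and \ref{T2} directly produce the required functions.

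For necessity, I would first observe that every $\sigma_n^\alpha(x,f)$, being a finite linear combination of Walsh functions, is continuous in the topology of $[0,1]^*$. The Cauchy characterization
\begin{equation*}
D(f)=\bigcup_{m=1}^\infty\bigcap_{N=1}^\infty\bigcup_{n,k\ge N}\bigl\{x:|\sigma_n^\alpha(x,f)-\sigma_k^\alpha(x,f)|>1/m\bigr\}
\end{equation*}
then exhibits $D(f)$ as $G_{\delta\sigma}$, because each innermost set is open by continuity. Likewise
\begin{equation*}
UD(f)=\bigcap_{m=1}^\infty\bigcap_{N=1}^\infty\bigcup_{n\ge N}\bigl\{x:|\sigma_n^\alpha(x,f)|>m\bigr\}
\end{equation*}
displays $UD(f)$ as $G_\delta$. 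Invoking the classical a.e.\ convergence of positive-order Ces\`aro means of Walsh--Fourier series on $L^p([0,1]^*)$, $1\le p\le\infty$ (see \cite{GES}, chap.~4), supplies the auxiliary information that both sets are null.

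For sufficiency, I would use the already-noted fact that $\sigma_n^\alpha$ satisfies \p{C1}--\p{C4} on $[0,1]^*$. Given a $G_{\delta\sigma}$ null set $E$, \trm{T1} applied to $U_n=\sigma_n^\alpha$ produces a function $f\in L^\infty([0,1]^*)\subset L^p([0,1]^*)$ whose divergence set under $\sigma_n^\alpha$ equals $E$. Given a $G_\delta$ null set $E$ and $1\le p<\infty$, \trm{T2} applied with $\psi(t)=t^p$ delivers a measurable $f\in L^p([0,1]^*)$ with $UD(f)=E$. The case $p=\infty$ is forced to be trivial: by \p{C2} the sequence $\sigma_n^\alpha(\cdot,f)$ is pointwise bounded whenever $f\in L^\infty$, so $UD(f)=\varnothing$, which is $G_\delta$.

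The main issue is really a caveat rather than an obstacle: the characterization is meaningful only modulo the implicit constraint that $E$ be a null set, which on the necessity side is enforced by a.e.\ convergence and on the sufficiency side is built into the hypotheses of \trm{T1} and \trm{T2}. Once the continuity of $\sigma_n^\alpha(x,f)$ in the topology of $[0,1]^*$ and the a.e.\ convergence on $L^p$ are noted, both of which are classical, the corollary is a direct reading of the main theorems.
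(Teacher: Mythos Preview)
Your proposal is correct and matches the paper's (implicit) argument: the paper does not write out a proof of this corollary but simply observes that Walsh functions are continuous on $[0,1]^*$ and that the $(C,\alpha)$-means satisfy \p{C1}--\p{C4}, then invokes Theorems~\ref{T1} and~\ref{T2} via the remark following \trm{T2}. Your treatment of necessity (the Hahn--Sierpinski decomposition using continuity) and sufficiency (direct application of the main theorems, with $\psi(t)=t^p$) is exactly this, and your caveat that the word ``null'' is tacitly required in the corollary statement---supplied on the necessity side by a.e.\ convergence and on the sufficiency side by the hypotheses of the theorems---is a correct and useful observation that the paper leaves implicit.
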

\begin{remark}
The $(C,\alpha)$-means $\sigma_n^\alpha$ of Walsh-Fourier series can be considered as kernel operators. Nevertheless,  it is known (see \cite{GES}) that the kernels of $\sigma_n^\alpha$ do not form an approximation of the identity.
\end{remark}
\subsection{Splines} Recall the definition of splines on an interval $[a,b]$. For a knot-collection $\Delta = \{t_j\}_{j=1}^{n+k}\subset [a,b]$ such that
\begin{align}
	&t_i\le t_{i+1},\quad t_i\le t_{i+k},\\
	&t_1=\ldots=t_k=a,\quad t_{n+1}=\ldots=t_{n+k}=b
\end{align}
let denote by $\ZS_k(\Delta)$ the space of $k$-order splines with the notes $\Delta$. Those are the functions, which are polynomials of degree $\le k-1$ on each interval $[t_j,t_{j+1}]$ and  has a $k-1-m_j$ continuous derivatives at any node $t_j\in \Delta$ of a multiplicity $m_i$.  Let $P_\Delta$ be the orthoprojection operator onto $\ZS_k(\Delta)$. Denote $|\Delta|=\max_j(t_{j+1}-t_j)$. Given sequence of fixed $k$-order knot-collections $\Delta_n$ with $|\Delta_n|\to 0$ consider the operator sequence $P_{\Delta_n}$. It is known such an operator sequence satisfy properties \p{C1}-\p{C4}. Moreover, Shadrin \cite{Sha} solving de Boor's conjecture proved the uniformly boundedness of these operators on $C[a,b]$ that implies $P_{\Delta_n}(f)\rightrightarrows f$ whenever $f\in C[a,b]$.  Then Passenbrunner and  Shadrin in \cite{PaSh} proved $P_{\Delta_n}(f)\to f$ a.e. for any $f\in L^1[a,b]$, where it was also proved that the kernels of operators $P_{\Delta_n}$ form an approximation of the identity. Thus we can state the following.
\begin{corollary}\label{C5}
		Let $\Delta_n$ be a sequence of knot-collections in $[a,b]$ such that $|\Delta_n|\to0$. Then for a set $E\subset [a,b]$ to be the divergence (unbounded divergence) set of $P_{\Delta_n}(f)$ for a function $L^p[a,b]$, $1\le p\le \infty$ it is necessary and sufficient to be a $G_{\delta\sigma}$-set ($G_\delta$-set) in the standard topology of $[a,b]$.
\end{corollary}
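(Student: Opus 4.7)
The plan is to verify that the spline projection sequence $\{P_{\Delta_n}\}$ satisfies the hypotheses of Theorems \ref{T1} and \ref{T2}, and then combine those theorems with the classical Hahn--Sierpi\'nski topological characterization recalled in the introduction. First note that $[a,b]$ with the Euclidean distance and the Lebesgue measure is trivially a space of homogeneous type (doubling with constant $2$), and the space $C_K[a,b]=C[a,b]$ is dense in $L^1[a,b]$, so the framework of \pro{P} is available.

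The core step is to check \p{C1}--\p{C4} for $U_n=P_{\Delta_n}$. Property \p{C1} is immediate: $\ZS_k(\Delta_n)$ is finite-dimensional, all norms on it are equivalent, so point evaluations are bounded $L^1$-functionals and $P_{\Delta_n}\colon L^1\to M$ is bounded for each fixed $n$. Property \p{C2} is precisely Shadrin's solution of de Boor's conjecture \cite{Sha}, which gives $\sup_n\|P_{\Delta_n}\|_{L^\infty\to L^\infty}<\infty$. For \p{C3} and \p{C4} I will invoke the result of Passenbrunner--Shadrin \cite{PaSh} that the kernels of $P_{\Delta_n}$ form an approximation of the identity in the sense of Section 2.1; together with the density of $C_K[a,b]$ in $L^1[a,b]$, \pro{P} then delivers all four properties automatically.

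Granted these properties, the sufficient direction is a direct application of the main theorems. Since $P_{\Delta_n}(f)\to f$ almost everywhere for any $f\in L^1[a,b]$ by \cite{PaSh}, every divergence or unbounded divergence set of $\{P_{\Delta_n}(f)\}$ is necessarily a null-set. For a $G_{\delta\sigma}$ null-set $E\subset [a,b]$, \trm{T1} produces $f\in L^\infty[a,b]\subset L^p[a,b]$ whose divergence set for $P_{\Delta_n}(f)$ is exactly $E$. For a $G_\delta$ null-set $E$ and $1\le p<\infty$, \trm{T2} applied with $\psi(t)=t^p$ produces $f\in L^p[a,b]$ whose unbounded divergence set is exactly $E$.

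The necessary direction is the classical topological argument recalled in the introduction: under the standing assumption that the interior knots of $\Delta_n$ have multiplicity at most $k-1$, each element of $\ZS_k(\Delta_n)$ is continuous, so every $P_{\Delta_n}(f)$ is a continuous function on $[a,b]$; consequently the divergence set of $\{P_{\Delta_n}(f)\}$ is a $G_{\delta\sigma}$-set and its unbounded divergence set is a $G_\delta$-set. The principal technical obstacle underlying the entire argument is the Shadrin--Passenbrunner machinery needed to secure \p{C2}--\p{C4}; with that accepted as a black box, the corollary reduces to a straightforward invocation of Theorems \ref{T1}--\ref{T2}.
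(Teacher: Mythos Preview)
Your proposal is correct and follows essentially the same route as the paper: verify \p{C1}--\p{C4} for $P_{\Delta_n}$ via Shadrin \cite{Sha} and Passenbrunner--Shadrin \cite{PaSh} (so that \pro{P} applies), then invoke Theorems \ref{T1} and \ref{T2} for the sufficiency and the Hahn--Sierpi\'nski argument for the necessity. You are in fact slightly more careful than the paper's statement, correctly noting that the divergence and $UD$ sets must also be null (by the a.e.\ convergence of \cite{PaSh}), that the $UD$ characterization via \trm{T2} requires $1\le p<\infty$, and that continuity of the splines needs the interior knot multiplicities to be at most $k-1$.
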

\subsection{More specific examples}
Let us also discuss some more specific examples of operator sequences that obey the conditions of Theorem \ref{T1} and \ref{T2}, and those can be deduced from one of above considered general examples. 
\begin{enumerate}
	\item Partial sums of Fourier series in Haar and Franklin systems with general nodes are spline operator sequences, so those obey the conditions of \cor{C5}. For the Haar series the characterization of $C$, $D$ and $UD$ sets first were given in \cite{Lun} (for $UD$ sets) and \cite{Kar3} (for $D$ sets)
	\item The characterization of convergence and divergence sets of $(C,\alpha)$-means of trigonometric Fourier series with a parameter $\alpha>0$ was given by Zahorski in \cite{Zah1}, where the same problem was considered also for Poisson integrals on the unit disc. We just note that both operators are approximations of the identity on the unit circle.
	\item Let $(X,d,\mu)$ be a space of homogeneous type such that $C_K(X)$ is dense in $L^1(X)$. Consider a sequence of measurable sets $G_n\subset X$, which are equivalent to balls $B(0,r_n)$, i.e. 
	\begin{equation}
		G_n\subset B(0,r_n),\quad \mu(G_n)>c\mu(B(0,r_n))
	\end{equation}
where $c>0$ is a constant. Clearly the operator sequence 
\begin{equation}
	U_n(x,f)=\frac{1}{\mu(G_n)}\int_{G_n}f(t)d\mu(t)
\end{equation}
is a particular case of operators \e{x61}. This sequence in particular is interesting when $X$ coincides with $\ZR^n$.
\item Dyachkov in \cite{Dya} considered the $C$-$D$ sets characterization problem for operators
\begin{equation*}
	U_\varepsilon(x,f)=\int_{\ZR^d}f(x-t)\phi_\varepsilon(t)dt\hbox{ as }\varepsilon\searrow 0,
\end{equation*}
for an approximation of the identity kernels $\phi_\varepsilon(t)=\varepsilon^{-d}\phi(t/\varepsilon)$, where $\phi\in L^1(\ZR^d)$.
\item The main part of Zahorski's theorem of \cite{Zah} (\trm{OT3}) is a construction of a continuous function whose non-differentiability set is a given set $G=G_1\cup G_2$, where $G_1$ is $G_\delta$ set and $G_2$ is a $G_{\delta\sigma}$ null set. Without loss of generality one can suppose that $G_1\cap G_2=\varnothing$. So the construction can be split into contractions of two different functions $f_1$ and $f_2$, which non-differentiability sets are $G_1$ and $G_2$ respectively. Then the desired functions is $f=f_1+f_2$. The function $f_2$ can be seeked in the form
\begin{equation}
	f_2(x)=\int_{-\infty}^xg(t)dt
\end{equation}
where $g\in L^\infty(\ZR)$. Then the differentiability points set of the function $f_2$ coincides with the set of points $x\in \ZR$ such that the limit
\begin{equation}\label{x65}
	\lim_{|I|\to 0,\,I\ni x}\frac{1}{|I|}\int_Ig(t)dt 
\end{equation}
exists, where $I$ are closed intervals containing the point $x$. The existence of a function $g\in L^\infty(\ZR)$ for which the limit \e{x65} exists only at the points of a given $G_{\delta\sigma}$ null set, is a part of general \pro{P}.
\end{enumerate}

	\section{Auxiliary lemmas}

	It is known from \cite{MaSe} that for any quasi-distance $d$ there exists an alternative quasi-distance $d'$, which is equivalent to $d$, i.e. $C_1d(x,y)\le d'(x,y)\le C_2d(x,y)$, and satisfies a Lipschits type condition 
	\begin{equation}\label{x63}
		|d'(x,z)-d'(y,z)|\le L(d'(x,y))^\alpha(\max\{d'(x,z),d'(y,z)\})^{1-\alpha},
	\end{equation}
	where $L>0$ and $0<\alpha<1$ are some constants. One can see that $d'$ induces the same topology as $d$ does and in view of \e{x63} $d'$-balls become open sets there. 
	Using this we can prove the following.
	\begin{lemma}\label{L13}
		Any open set $G$ in a space of homogeneous type permits a representation $G=\cup_{k=1}^\infty F_k$, where each $F_k$ is a closed set and $F_k\Subset G$.
	\end{lemma}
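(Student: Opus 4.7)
The plan is to replace $d$ by the Macías--Segovia equivalent quasi-distance $d'$ satisfying \e{x63}, for which $d'$-balls are open, and then use the distance-to-complement function with respect to $d'$ to produce the desired closed sets.

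Concretely, fix an open set $G\subset X$, and for each positive integer $n$ set
\begin{equation*}
F_n=\{x\in X:\,\dist'(x,G^c)\ge 1/n\},
\end{equation*}
where $\dist'$ is the $d'$-distance to the set. First I would verify $\cup_{n=1}^\infty F_n=G$. If $x\in G$, openness of $G$ together with the fact that $d'$-balls are open yields some $r>0$ with $B'(x,r)\subset G$; this forces $d'(x,y)\ge r$ for every $y\in G^c$, so $\dist'(x,G^c)\ge r$ and $x\in F_n$ for all sufficiently large $n$. Conversely, if $\dist'(x,G^c)>0$ then $x\notin G^c$, i.e.\ $x\in G$.

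Next I would check that each $F_n$ is closed. Take a sequence $x_k\to x$ with $x_k\in F_n$ and an arbitrary $y\in G^c$; the Hölder-type estimate \e{x63} gives
\begin{equation*}
|d'(x_k,y)-d'(x,y)|\le L\,d'(x_k,x)^\alpha\bigl(\max\{d'(x_k,y),d'(x,y)\}\bigr)^{1-\alpha}.
\end{equation*}
The quasi-triangle inequality bounds $d'(x_k,y)\le K(d'(x_k,x)+d'(x,y))$, so the right-hand side stays bounded in $k$, while the factor $d'(x_k,x)^\alpha\to 0$. Hence $d'(x_k,y)\to d'(x,y)$, which together with $d'(x_k,y)\ge 1/n$ yields $d'(x,y)\ge 1/n$. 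Taking the infimum over $y\in G^c$ gives $\dist'(x,G^c)\ge 1/n$, so $x\in F_n$.

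Finally, the equivalence $C_1 d\le d'\le C_2 d$ gives $\dist(A,B)\ge C_2^{-1}\dist'(A,B)$ for any two sets, hence $\dist(F_n,G^c)\ge 1/(C_2 n)>0$, i.e.\ $F_n\Subset G$ in the original quasi-metric, as required. The only nontrivial point is the closedness step: without the Hölder regularity of $d'$ the function $\dist'(\cdot,G^c)$ need not be continuous under a mere quasi-triangle inequality, which is precisely why we invoke the Macías--Segovia regularization at the outset.
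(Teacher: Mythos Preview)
Your argument is correct and follows essentially the same route as the paper: pass to the Mac\'{\i}as--Segovia distance $d'$, set $F_n=\{x:\dist'(x,G^c)\ge 1/n\}$, and use the H\"older regularity \e{x63} to obtain closedness, with the metric equivalence giving $F_n\Subset G$ in the original distance. The only cosmetic difference is that the paper shows $F_n^c$ is open directly (for $x$ with $\dist'(x,G^c)<1/n$, a small $d'$-ball around $x$ stays in $F_n^c$ by \e{x63}), whereas you argue sequentially; your version is legitimate because the open $d'$-balls give a countable neighbourhood base, so sequential closedness coincides with closedness here.
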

	\begin{proof}
	Let $\dist'$ denote the distance corresponding to the quasi-distance $d'$ from \e{x63}. Define
		\begin{equation*}
			F_k=\{x\in G:\, \dist'(x,G^c)\ge 1/k\},\quad k=1,2,\ldots.
		\end{equation*}
	It is clear that $F_k\Subset G$. It remains to show that each $F_k$ is closed or equivalently $F_k^c$ is open. For any $x\in F_k^c$ we have $\dist'(x,G^c)<1/k$. Then for small enough $r>0$ the condition $d'(y,x)<r$ implies $\dist'(y,G^c)<1/k$, and it follows from \e{x63}. Thus we obtain $B'(x,r)\subset F_k^c$, i.e. $F_k^c$ is open.
	
	\end{proof}
The following lemma is a standard property of a topology induced by a quasi-distance.
\begin{lemma}\label{L2}
	If sets $A$ and $B$ in a quasi-metric space $(X,d)$ satisfy $\dist(A,B)>0$, then there are disjoint open sets $U$ and $V$ such that $A\Subset U$ and $B\Subset V$.
\end{lemma}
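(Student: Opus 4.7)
The plan is to reduce to the regularized quasi-distance $d'$ from \eqref{x63}, whose balls are genuinely open, since the natural construction via an $\eta$-neighborhood fails in general quasi-metric spaces precisely because $d$-balls need not be open. Let $K'$ denote the quasi-triangle constant for $d'$, and observe that, because $d$ and $d'$ are equivalent, the hypothesis $\dist(A,B)>0$ gives $\delta:=\dist'(A,B)>0$, where $\dist'$ is the distance computed with respect to $d'$.

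Next I would fix $\eta\in(0,\delta/(2K'))$ and set
\[
U=\bigcup_{a\in A}\{x\in X:\, d'(x,a)<\eta\},\qquad V=\bigcup_{b\in B}\{x\in X:\, d'(x,b)<\eta\}.
\]
Since $d'$-balls are open in the common topology generated by $d$, both $U$ and $V$ are open sets. Disjointness is the usual one-line argument: if $x\in U\cap V$, then there would exist $a\in A$ and $b\in B$ with $d'(x,a),d'(x,b)<\eta$, whence the quasi-triangle inequality gives $d'(a,b)\le K'(d'(a,x)+d'(x,b))<2K'\eta<\delta$, contradicting $\dist'(A,B)=\delta$.

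It remains to check that $A\Subset U$ and $B\Subset V$. Clearly $A\subset U$ (take $a=x$). If $y\in U^c$, then by the definition of $U$ we have $d'(y,a)\ge\eta$ for every $a\in A$, so $\dist'(A,U^c)\ge\eta>0$; by the equivalence of $d'$ and $d$, this yields $\dist(A,U^c)>0$, i.e.\ $A\Subset U$. The argument for $B\Subset V$ is identical.

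The only real obstacle is the one noted at the beginning: in a bare quasi-metric space the sets $\{x:\dist(x,A)<\eta\}$ are not automatically open, so the standard metric-space construction cannot be used directly. Invoking the Macías–Segovia regularization to replace $d$ by $d'$ dissolves this obstacle, and after that the proof is the classical "inflate each set by a margin smaller than $\delta/(2K')$" argument, the factor $K'$ accounting for the quasi-triangle inequality.
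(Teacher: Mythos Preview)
Your proof is correct and follows essentially the same approach as the paper: pass to the Mac\'ias--Segovia regularization $d'$ so that balls are open, build $U$ and $V$ as unions of small $d'$-balls around points of $A$ and $B$, verify disjointness via the quasi-triangle inequality, and read off $A\Subset U$, $B\Subset V$ from the construction. The only cosmetic difference is that the paper uses a point-dependent radius $r=\dist'(x,B)/(2K)$ whereas you use a single uniform radius $\eta<\delta/(2K')$; your choice is arguably cleaner and makes the verification of $\dist'(A,U^c)\ge\eta$ immediate.
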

\begin{proof}
	Let $d'$ be a Lipschits distance satisfying \e{x63}. Equivalency of distances $d$ and $d'$ implies $\dist'(A,B)>0$, besides we know that each $d'$-ball is an open set. Define an open set $U\supset A$ to be the union of all the balls $B'(x,r)$ with $x\in A$ and $r=\dist'(x,B)/(2K)>0$, where $K$ is the triangle inequality constant of the distance $d'$. Similarly defining the open neighborhood $V\supset B$, we claim that $U\cap V=\varnothing$. Suppose to the contrary that there is a point $z\in U\cap V$. Then we find $x\in A$ and $y\in B$ such that 
	\begin{equation*}
		d'(x,z)<\dist'(x,B)/(2K), \quad d'(y,z)<\dist'(y,A)/(2K).
	\end{equation*}
	This implies
	\begin{equation*}
		d'(x,y)\le K(d'(x,z)+d'(y,z))<\max\{\dist'(x,B),\dist'(y,A)\},
	\end{equation*}
	which is a contradiction and so $U\cap V=\varnothing$. It remains just observe that $A\Subset U$ and $A\Subset V$, which hold with respect to both distances $d$ and $d'$.
\end{proof}
		It is known in the geometric measure theory (see for example \cite{Sim}, Theorem 1.11) that if in a topological measurable space $(X,\mu)$ every open set is of $F_\sigma$ and the measure $\mu$ is open $\sigma$-finite (i.e. $X$ is a countable union of finite measure open sets), then 
		\begin{equation}\label{x67}
			\mu(E)=\inf_{G\text{ open},\,  G\supset E}\mu(G),
		\end{equation}
	for any Borel set $E$.  Any space of homogeneous type $X$ satisfies such properties, since every open set is of $F_\sigma$ by \lem{L13}, and $X$ can be written as a countable union $d'$-balls, which are open and have finite measure. Hence we can state the following.
	\begin{lemma}\label{L10}
	Any space of homogeneous type $X$ enjoys the property \e{x67}.
	\end{lemma}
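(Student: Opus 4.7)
The plan is to invoke the abstract outer regularity theorem from geometric measure theory (Simon, Theorem~1.11) as a black box: if $(X,\mu)$ is a topological measure space in which (a)~every open set is an $F_\sigma$, and (b)~$X$ is a countable union of open sets of finite measure, then
\begin{equation*}
\mu(E)=\inf_{G\text{ open},\, G\supset E}\mu(G)
\end{equation*}
for every Borel set $E$. Thus the task reduces to verifying (a) and (b) in any space of homogeneous type $(X,d,\mu)$.

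For (a), the verification is immediate from \lem{L13}: the representation $G=\bigcup_{k=1}^\infty F_k$ provided there has each $F_k$ closed (in fact with $F_k\Subset G$), so every open $G$ is an $F_\sigma$-set.

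For (b), I would pass to the Mac\'ias--Segovia equivalent quasi-distance $d'$ from \e{x63}, whose open balls $B'(x,r)$ are genuine open sets in the common topology induced by $d$ and $d'$. Fix any basepoint $x_0\in X$ and consider the cover $X=\bigcup_{n=1}^\infty B'(x_0,n)$, which exhausts $X$ simply because $d'(x_0,y)$ is a finite real number for each $y\in X$. Each ball has finite measure: the doubling condition iterated gives $\mu(B'(x_0,n))\le C^{\lceil\log_2 n\rceil}\mu(B'(x_0,1))<\infty$ (with the equivalence constants between $d$ and $d'$ absorbed into one extra doubling step). This supplies an open $\sigma$-finite exhaustion of $X$, establishing (b).

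There is no real obstacle here; both hypotheses of the Simon theorem have essentially been prepared in advance by \lem{L13} and by the doubling axiom, and the only mild bookkeeping is the switch between $d$ and $d'$, which is harmless since the two distances generate the same topology and the same notion of Borel set.
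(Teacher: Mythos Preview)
Your proposal is correct and follows essentially the same approach as the paper: the paper likewise reduces to Simon's Theorem~1.11, verifies the $F_\sigma$ hypothesis via \lem{L13}, and obtains the open $\sigma$-finite exhaustion by writing $X$ as a countable union of $d'$-balls, which are open and of finite measure. The only difference is that you spell out the doubling iteration for the finiteness of $\mu(B'(x_0,n))$, while the paper leaves this implicit.
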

\begin{definition}
	A sequence $\Omega=\{\omega_k:\,k=1,2,\ldots\}$ of Borel sets in a topological space $X$ is said to be
	
	1) a {\it partition} of an open set $B\subset X$ if
	\begin{equation}\label{x42}
		B=\cup_k\omega_k,\quad \omega_k\cap \omega_{k'}=\varnothing \text { if }k\neq k',
	\end{equation}
	
	2) {\it locally-finite} if there are open sets $V_k\supset \omega_k$ such that each of them has  only finite number of intersection with others, i.e.
	\begin{equation}\label{x44}
		\#\{j\in \ZN:\,V_j\cap V_k\neq \varnothing\}<\infty\text{ for any }k=1,2,\ldots,
	\end{equation}
	
	3) a {\it regular partition}  for an open set $B$ if it is a partition of $B$ and there are open sets $V_j$, satisfying \e{x44} (so it is locally-finite) as well as,
	\begin{equation}\label{x43}
	 \omega_k\Subset V_k\subset B,\quad k\ge 1.
	\end{equation}

\end{definition}
\begin{lemma}\label{L11}
	Let $\omega_k$, $k=1,2,\ldots$, be a locally-finite family of measurable sets in a topological measurable space $X$. Then for any measurable set $E\subset \cup_k\omega_k$ and numbers $\varepsilon_k >0$ there exists an open set $G\supset E$ such that 
	\begin{equation*}
		\mu((G\setminus E)\cap \omega_k)<\varepsilon_k,\quad k=1,2,\ldots.
	\end{equation*}
\end{lemma}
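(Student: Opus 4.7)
The plan is to use outer regularity (\lem{L10}) on each piece $E\cap\omega_j$ separately, trim the resulting open neighborhood using the open sets $V_j$ coming from local finiteness so that it stays ``local'', and then take the union. Local finiteness will ensure that only finitely many of these local neighborhoods contribute to the error on any given $\omega_k$, which is what lets us make the error controllable.

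More precisely, let $V_j\supset \omega_j$ be the open sets witnessing local finiteness, and set $J_k=\{j:V_j\cap V_k\neq\varnothing\}$, which is finite by \e{x44}. Note $J_k$ is symmetric: $j\in J_k \iff k\in J_j$. For each $j$ apply \lem{L10} to the measurable set $E\cap\omega_j$ to obtain an open set $W_j\supset E\cap\omega_j$ with $\mu(W_j\setminus(E\cap\omega_j))<\delta_j$, where $\delta_j>0$ will be chosen below. Replace $W_j$ by $H_j=W_j\cap V_j$; this is still open, still contains $E\cap\omega_j$ (since $E\cap\omega_j\subset\omega_j\subset V_j$), lies inside $V_j$, and still satisfies $\mu(H_j\setminus(E\cap\omega_j))<\delta_j$. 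Define $G=\cup_j H_j$; it is open and contains $\cup_j(E\cap\omega_j)=E\cap(\cup_j\omega_j)=E$.

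Now the key localization step: since $H_j\subset V_j$ and $\omega_k\subset V_k$, we have $H_j\cap\omega_k=\varnothing$ whenever $V_j\cap V_k=\varnothing$, i.e.\ whenever $j\notin J_k$. Writing
\begin{equation*}
(G\setminus E)\cap\omega_k=\bigcup_j (H_j\setminus E)\cap\omega_k\subset \bigcup_{j\in J_k}\bigl(H_j\setminus(E\cap\omega_j)\bigr),
\end{equation*}
(using $H_j\setminus E\subset H_j\setminus(E\cap\omega_j)$), we obtain the bound
\begin{equation*}
\mu((G\setminus E)\cap\omega_k)\le \sum_{j\in J_k}\delta_j.
\end{equation*}
It remains to choose the $\delta_j$ so that this sum is less than $\varepsilon_k$ for every $k$ simultaneously. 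Setting $n_k=|J_k|$ and
\begin{equation*}
\delta_j=\min_{k\in J_j}\frac{\varepsilon_k}{2n_k}>0,
\end{equation*}
we see that for each $k$ and each $j\in J_k$ we have $k\in J_j$, so $\delta_j\le \varepsilon_k/(2n_k)$, and hence $\sum_{j\in J_k}\delta_j\le n_k\cdot\varepsilon_k/(2n_k)=\varepsilon_k/2<\varepsilon_k$, as required.

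The only subtlety I anticipate is the simultaneous choice of the $\delta_j$: a naive attempt (say $\delta_j=2^{-j}\varepsilon_j$) does not work because the index $j$ contributing error to $\omega_k$ is governed by $J_k$, not by $k$ itself. The symmetry $j\in J_k\iff k\in J_j$ and the finiteness of $J_j$ (again from \e{x44}) are precisely what allow the minimum defining $\delta_j$ to be taken over a finite set and thus to be strictly positive.
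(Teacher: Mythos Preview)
Your proof is correct and follows essentially the same approach as the paper's: apply outer regularity (\lem{L10}) to each $E\cap\omega_j$, trim the resulting open set by intersecting with $V_j$, take the union, and choose the individual tolerances via the symmetric finite-intersection relation $j\in J_k\iff k\in J_j$. Your $\delta_j=\min_{k\in J_j}\varepsilon_k/(2n_k)$ is exactly the paper's choice $\varepsilon'_j=\min_{k:\,V_k\cap V_j\neq\varnothing}\varepsilon_k/l_k$ up to an immaterial factor of~$2$.
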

\begin{proof}
	Let  $V_k\supset \omega_k$, $k=1,2,\ldots$, be open sets such that each $V_k$ has intersection only with finite number of elements $V_j$ and we denote this number by $l_k$. One can check that 
	\begin{equation}
		\varepsilon'_k=\min_{j:\,V_j\cap V_k\neq \varnothing}\varepsilon_j/l_j>0\text{ for all }k=1,2,\ldots,
	\end{equation}
	because only finite number of $j$'s in this minimum satisfy the relation $V_j\cap V_k\neq \varnothing$. Thus, applying \lem{L10}, we may define open sets $G_k$, $k=1,2,\ldots$, such that
	\begin{align}
		E\cap \omega_k\subset G_k\subset V_k,\quad \mu(G_k\setminus E)<\varepsilon'_k.
	\end{align}
	Thus we have $G=\cup_kG_k\supset E$ and 
	\begin{align}
		\mu((G\setminus E)\cap \omega_k)&\le \sum_{j:\,V_j\cap \omega_k\neq \varnothing} \mu((G_j\setminus E )\cap \omega_k)\\
		&\le  \sum_{j:\,V_j\cap V_k\neq \varnothing} \mu(G_j\setminus E)<\sum_{j:\,V_j\cap V_k\neq \varnothing} \varepsilon'_j\\
		&<  \sum_{j:\,V_j\cap V_k\neq \varnothing} \varepsilon_k/l_k=\varepsilon_k.
	\end{align}
This completes the proof.
\end{proof}

	\begin{lemma}\label{L0}
		Any open set $B\subset X$ in a space of homogeneous type $X$ enjoys a regular partition.
\end{lemma}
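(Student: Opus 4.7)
The plan is to realize the partition through the level sets of the distance-to-complement function. Replace $d$ by the equivalent Lipschitz-type quasi-distance $d'$ of \eqref{x63} (so that $d'$-balls are open) and denote its quasi-triangle constant by $K'$. Set $\phi(x):=\dist'(x,B^c)$, so that $\phi>0$ precisely on $B$. By \lem{L13} (applied in the $d'$-geometry) each superlevel set $\{\phi\ge c\}$ is closed, and a standard argument based on \eqref{x63} shows that $\phi$ is continuous; consequently every band $\{a<\phi<b\}$ is open.

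Fix a scale $\lambda>K'$ and define, for $k=1,2,\ldots$,
\begin{align*}
	\omega_k&:=\{x\in X:\,\lambda^{-k}\le\phi(x)<\lambda^{-k+1}\},\\
	V_k&:=\{x\in X:\,\lambda^{-k-1}<\phi(x)<\lambda^{-k+2}\}.
\end{align*}
The sets $\omega_k$ are Borel and pairwise disjoint with union $\{\phi>0\}=B$; each $V_k$ is open and satisfies $\omega_k\subset V_k\subset B$. Local finiteness of the family $\{V_k\}$ is immediate, since $V_j\cap V_k\neq\varnothing$ forces the intervals $(\lambda^{-k-1},\lambda^{-k+2})$ and $(\lambda^{-j-1},\lambda^{-j+2})$ to overlap, hence $|j-k|\le 2$.

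The remaining non-trivial point is $\omega_k\Subset V_k$. For $x\in\omega_k$ and $y\in V_k^c$ one has either $\phi(y)\le\lambda^{-k-1}$ or $\phi(y)\ge\lambda^{-k+2}$. In the first case pick $z_n\in B^c$ with $d'(y,z_n)\to\phi(y)$ and apply the quasi-triangle inequality $\phi(x)\le d'(x,z_n)\le K'(d'(x,y)+d'(y,z_n))$, passing to the limit to obtain
\begin{equation*}
	d'(x,y)\ge \frac{\phi(x)}{K'}-\phi(y)\ge \frac{\lambda^{-k}}{K'}-\lambda^{-k-1}=\lambda^{-k-1}\left(\frac{\lambda}{K'}-1\right)>0;
\end{equation*}
the second case is symmetric (with the roles of $x$ and $y$ exchanged in the choice of $z_n$) and yields an even larger gap. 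This quasi-triangle computation, and the requirement $\lambda>K'$ that makes the final quantity positive, are the main technical point of the proof; everything else is direct bookkeeping with the level sets of $\phi$.
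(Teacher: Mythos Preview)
Your approach via level bands of $\phi=\dist'(\cdot,B^c)$ is close in spirit to the paper's: the paper also starts from the closed sets $\{\phi\ge 1/k\}$ (this is exactly what \lem{L13} produces), but then manufactures a nested exhaustion $F_k\Subset\Int(F_{k+1})$ by iterating \lem{L2}, and only afterwards takes the annuli $\omega_k=F_k\setminus F_{k-1}$. Your route is more direct---you read off the annuli and their open neighbourhoods straight from $\phi$, using the continuity of $\phi$ (which indeed follows from \eqref{x63}) in place of the separation lemma. The $\omega_k\Subset V_k$ verification via the quasi-triangle inequality is clean and correct.

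There is, however, a genuine gap in the covering step. Since $\lambda>K'\ge1$, for every $k\ge1$ one has $\lambda^{-k+1}\le1$, and hence
\[
\bigcup_{k\ge1}\omega_k=\{x:0<\phi(x)<1\},
\]
which in general is a proper subset of $B=\{\phi>0\}$: any $x\in B$ with $\dist'(x,B^c)\ge1$ lies in no $\omega_k$, and in the extreme case $B=X$ (so $\phi\equiv+\infty$) every $\omega_k$ is empty. The repair is easy---either let $k$ range over all of $\ZZ$ (your local-finiteness and $\omega_k\Subset V_k$ arguments go through verbatim, and the case $B=X$ is handled by the trivial one-piece partition), or absorb the far region by redefining $\omega_1:=\{\phi\ge\lambda^{-1}\}$ and $V_1:=\{\phi>\lambda^{-2}\}$---but as written the claim $\cup_{k\ge1}\omega_k=B$ is false.
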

\begin{proof}
	For any open set $B\subset X$ we can find closed sets $F_k$, $k=1,2,\ldots$,  such that 
	\begin{equation}\label{x41}
		B=\cup_kF_k,\quad F_k\Subset \Int(F_{k+1}), 
	\end{equation}
where $\Int(E)$ denotes the interior of a set $E\subset X$. Indeed, first we write $B=\cup_k A_k$, where $A_k\Subset B$ are closed sets (see \lem {L13}). Since $\dist(A_1,B^c)>0$, by \lem{L2} the sets $A_1$ and $B^c$ have disjoint open neighborhoods $D_1\Supset A_1$ and $G_1\Supset B^c$. So we will have $A_1\Subset D_1$ and $\overline {D_1}\subset (G_1)^c\Subset B$. Then we do the same with the closed sets $A_2\cup \overline {D_1}\Subset B$ and $B^c$ and we get an open set $D_2\Supset  A_2\cup \overline {D_1}$ such that $\overline {D_2}\Subset B$. Continuing this procedure to infinity, finally we get a sequence of open sets $D_k$ such that $D_k\Supset A_k\cup \overline{D_{k-1}}$ and $\overline {V_k}\subset B$. Let us see that $F_1=A_1$, $F_k=A_k\cup \overline{D_{k-1}}$, $k\ge2$, are closed sets, satisfying \e{x41}. Clearly we have $B=\cup_kF_k$, besides $F_k\Subset D_k\subset F_{k+1}$. Since $D_k$ is open, we get $F_k\Subset \Int(F_{k+1})$.
	
	Having \e{x41}, we define Borel sets $\omega_1=F_1$, $\omega_k=F_k\setminus F_{k-1}$, $k\ge 2$, which clearly satisfy the partition condition \e{x42}. One can also see that the open sets $V_1=\Int(F_2), V_2=\Int(F_3)$, $V_k=  \Int(F_{k+1})\setminus F_{k-2}$, $k>2$, satisfy \e{x44}. Besides by \e{x41} we have
	\begin{equation*}
		\omega_k\subset F_k\setminus \Subset \Int(F_{k+1})\subset \Int(F_{k-1})\subset V_k, 
	\end{equation*}
	which implies \e{x43}.

\end{proof}
 The notation $f_n(x)\rightrightarrows f(x)$ will stand for the uniformly convergence of sequences of functions on a set.
	\begin{lemma}\label{L1}
		Let $(X,d,\mu)$ be a space of homogeneous type, and let an operator sequence \e{1} satisfy conditions \p{C1} and \p{C3}. If $G\subset X$ is an open set of finite measure and $F\Subset G$ is measurable, then 
		\begin{equation}\label{5}
			d_G(F)=\sup_{n\in\ZN}\sup_{x\in G^c}\,\, \sup_{\|f\|_1\le 1}|U_n(x,f\cdot \ZI_F)|<\infty.
		\end{equation}
	\end{lemma}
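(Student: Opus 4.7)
I would view the quantity in \e{5} as the uniform operator norm of a sequence of bounded linear operators between Banach spaces, and reduce the claim to the Banach--Steinhaus uniform boundedness principle. For each $n$, consider the linear map
\begin{equation*}
T_n: L^1(X) \to \ell^\infty(G^c), \qquad T_n f = U_n(\cdot, f\cdot \ZI_F)\big|_{G^c}.
\end{equation*}
Property \p{C1} gives $\|T_n f\|_{\ell^\infty(G^c)}\le \|U_n(\cdot,f\cdot \ZI_F)\|_M\le \rho_n\|f\|_1$, so each $T_n$ is bounded, and the quantity $d_G(F)$ of \e{5} is precisely $\sup_n\|T_n\|$. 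Since $L^1(X)$ and $\ell^\infty(G^c)$ are Banach spaces, it suffices to verify the pointwise boundedness $\sup_n\|T_n f\|_{\ell^\infty(G^c)}<\infty$ for each fixed $f \in L^1(X)$.

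For the pointwise bound, fix $f\in L^1(X)$ and set $g=f\cdot \ZI_F$. Since $F\Subset G$, we have $\dist(F,G^c)>0$, so \lem{L2} produces disjoint open sets $U$ and $V$ with $F\Subset U$ and $G^c\Subset V$. Because $g$ vanishes off $F\subset U$ and $U\cap V=\varnothing$, we conclude $g\equiv 0$ on the open set $V$, i.e.\ $g$ is constant on $V$. Property \p{C3}, applied with $V$ in place of $G$, now says that $U_n(\cdot,g)$ converges uniformly on any set $E\Subset V$; taking $E=G^c$ (which satisfies $G^c\Subset V$ by construction), we learn that $\{U_n(\cdot,g)\}_n$ is Cauchy, and therefore bounded, in the sup norm on $G^c$. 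This yields $\sup_n\|T_n f\|_{\ell^\infty(G^c)}<\infty$.

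Banach--Steinhaus applied to the family $\{T_n\}$ then gives $\sup_n\|T_n\|<\infty$, which is exactly \e{5}. The only delicate step is the separation in \lem{L2}: one needs $g$ to be constant on an \emph{open neighborhood} of $G^c$, not merely to be zero on $F^c$. Producing this neighborhood is the only place the strict relation $F\Subset G$ (rather than $F\subset G$) is exploited; once it is in place, the remainder of the argument is standard functional analysis.
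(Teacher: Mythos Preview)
Your proof is correct and matches the paper's approach in substance: both use \p{C3} (together with $F\Subset G$) to show that $\sup_n\sup_{x\in G^c}|U_n(x,f\cdot\ZI_F)|<\infty$ for each fixed $f\in L^1(X)$, and then pass from this pointwise bound to the uniform operator bound. The paper does not cite Banach--Steinhaus by name but instead writes out the classical gliding-hump contradiction by hand---choosing $n_k$, $x_k\in G^c$, $f_k$ with $|U_{n_k}(x_k,f_k)|$ growing fast and summing $f=\sum_k\alpha_k f_k$---which is precisely the standard proof of the uniform boundedness principle, so the two arguments are equivalent; your invocation of \lem{L2} to obtain an \emph{open} $V\supset G^c$ on which $f\cdot\ZI_F$ vanishes is in fact slightly more careful than the paper's direct appeal to $F^c$.
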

	\begin{proof}
First we note that by property \p{C3} we have
	\begin{equation}\label{x56}
		U_n(x,f\cdot \ZI_F)\rightrightarrows 0\hbox{ on } G^c,\hbox{ as }n\to\infty,
	\end{equation}
for any $f\in L^1(X)$, since $f\cdot \ZI_F\in L^1(X)$ is identically zero on the on the open set $F^c$ and $G^c\Subset F^c$. Suppose to the contrary that $d_G(F)=\infty$.
		The boundedness property \p{C1} of operators \e{1} implies
		\begin{align}
			d_{G,m}(F)=\max_{1\le n\le m}\sup_{x\in G^c}&\sup_{\|f\|_1\le 1}|U_n(x,f\cdot \ZI_F)|\\
			&\le \max_{1\le n\le m}\rho_n<\infty,\text{ for any }m\in \ZN,\label{a11}
		\end{align}
		and by the contrary assumption we have
		\begin{equation}\label{a12}
			d_{G,m}(F)\to\infty,\hbox{ as } m\to\infty.
		\end{equation}
		Applying induction, one can find integers $n_k\in\ZN$, points $x_k\in G^c$ and functions $f_k\in L^1(X)$, $k=1,2,\ldots $, such that
		\begin{align}
			&\| f_k\|_1\le 1, \quad \supp f_k\subset F,\, k\ge 1,\label{a5}\\
			&|U_{n_k}(x_k,f_k)|>k^3\cdot(1+\max_{1\le i<k}\rho_{n_i}),\, k\ge 1\label{a6}\\
			&\sup_{1\le i<k}|U_{n_k}(x,f_i)|<1,\quad x\in G^c,\, k>1.\label{a7}
		\end{align}
		Indeed, using \e{a12}, first we define $f_1(x)$, $n_1\in\ZN$ and $x_1\in G^c$ satisfying
		\begin{align*}
			&\| f_1\|_1\le 1, \quad \supp f_1\subset F,\\
			&|U_{n_1}(x_1,f_1)|>1,
		\end{align*}
		which is the base of induction. Then suppose that we have already defined $f_k\in L^1(X)$, $n_k\in\ZN$ and $x_k\in G^c$ satisfying conditions \e{a5}-\e{a7} for $k=1,2,\ldots ,p$. Once again using \e{a12}, we find $f_{p+1}(x)$, $n_{p+1}\in\ZN$ and $x_{p+1}\in G^c$, satisfying \e{a5} and \e{a6} for $k=p+1$. Moreover, by \e{a11}  the number $n_{p+1}$ can be chosen enough bigger and so, using \e{x56}, we can also ensure \e{a7} for $k=p+1$. This completes the induction process. Let us also add the bound
		\begin{equation}\label{x45}
			|U_{n_k}(x,f_i)|\le \rho_{n_k},\quad x\in X,\quad i=1,2,\ldots,
		\end{equation}
		which immediately follows from \p{C1} and the bound $\|f_i\|_1\le 1$. Now consider the function
		\begin{equation}\label{a15}
			f(x)=\sum_{k=1}^\infty \alpha_kf_k(x),\quad \alpha_k=\frac{1}{k^2\cdot(1+\max_{1\le i<k}\rho_{n_i})}.
		\end{equation}
		Clearly $f\in L^1 (X)$ and moreover, $\supp f \subset F$. Using \e{a5},\e{a6},\e{a7},\e{x45} and \e{a15}, we obtain
		\begin{multline*}\begin{split}
				|U_{n_k}(x_k,f)|\ge \\
				&\ge \alpha_k|U_{n_k}(x_k,f_k)|-\sum_{i=1}^{k-1}\alpha_i|U_{n_k}(x_k,f_i)|-
				\sum_{i=k+1}^\infty \alpha_i |U_{n_k}(x_k,f_i)|\\
				&\ge k-\sum_{i=1}^{k-1}\frac{1}{i^2}-\sum_{i=k+1}^\infty \frac{1}{i^2}\ge k-2.
			\end{split}
		\end{multline*}
		This gives a contradiction, since $\supp f \subset F$ and $U_n(x,f)\rightrightarrows 0$ on $G^c$ according to  \e{x56}.
	\end{proof}

For a sequence of pairwise disjoint measurable sets $\Omega=\{\omega_k\}$ in a space of homogeneous type $X$ and a mapping  $\nu:\Omega\to \ZN$ we denote
\begin{equation}\label{36}
	U_{(\Omega,\nu)}(x,f)=\sum_{k\ge 1} U_{\nu(\omega_k)}(x,f)\cdot \ZI_{\omega_k}(x).
\end{equation}
\begin{lemma}\label{L7}
	Let an operator sequence $U_n$ satisfy \p{C3} and functions $f_k\in L^1(X)$, $k=1,2,\ldots,n$ be constant on an open set  $G\subset X$, which has a partition $\Omega=\{\omega_k\}$ such that $ \omega_k\Subset G$. Then for any $\varepsilon >0$ and $R>0$ there exists a mapping $\nu:\Omega\to \ZN$ such that
	\begin{align}
		&\min_{\omega\in \Omega}\nu(\omega)>R,\label{x40}\\
		&|U_{(\Omega,\nu )} (x,f_i)-f_i(x)|<\varepsilon, \quad x\in G, \quad i=1,2,\ldots ,n.\label{x23}
	\end{align}
\end{lemma}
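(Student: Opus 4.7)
The plan is to choose $\nu(\omega_k)$ independently on each piece of the partition, using \p{C3} to guarantee the required closeness. Since each function $f_i$ ($1\le i\le n$) is constant on the open set $G$ and each $\omega_k\Subset G$ by hypothesis, property \p{C3} gives $U_m(x,f_i)\rightrightarrows f_i(x)$ uniformly on $\omega_k$ as $m\to\infty$. Hence for every pair $(k,i)$ I can select an index $N_{k,i}\in\ZN$ such that $m\ge N_{k,i}$ implies
\md0 \sup_{x\in\omega_k}|U_m(x,f_i)-f_i(x)|<\varepsilon. \emd

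Next, I would set $N_k=\max\{\lfloor R\rfloor+1,\,N_{k,1},\,\ldots,\,N_{k,n}\}$, which is well-defined because the maximum is over the finite set $\{1,\ldots,n\}$, and define $\nu(\omega_k):=N_k$. The lower bound $N_k>R$ immediately yields \e{x40}. To verify \e{x23}, I fix an arbitrary $x\in G$; since $\Omega$ partitions $G$, there is a unique index $k$ with $x\in\omega_k$, and the defining formula \e{36} collapses on this piece to
\md0 U_{(\Omega,\nu)}(x,f_i)=U_{\nu(\omega_k)}(x,f_i)=U_{N_k}(x,f_i). \emd
Since $N_k\ge N_{k,i}$, the uniform convergence on $\omega_k$ supplies $|U_{N_k}(x,f_i)-f_i(x)|<\varepsilon$, as required.

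The argument is essentially immediate once one observes that $U_{(\Omega,\nu)}$ is defined locally on each $\omega_k$, so the choice of $\nu$ decouples across the partition. No real obstacle arises; the only place where finiteness of the list $f_1,\ldots,f_n$ enters is in taking the maximum that produces $N_k$, and this step would genuinely fail for infinitely many functions. Note also that I do not need to assume anything about $\mu(G)$ or any local-finiteness of $\Omega$ for this lemma; the localization of $U_{(\Omega,\nu)}$ on $\omega_k$ is purely pointwise.
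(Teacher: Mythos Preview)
Your proof is correct and follows essentially the same approach as the paper: on each $\omega_k$ use \p{C3} (and $\omega_k\Subset G$) to get uniform convergence $U_m(x,f_i)\rightrightarrows f_i(x)$, then pick $\nu(\omega_k)$ large enough to beat both $R$ and the finitely many $\varepsilon$-thresholds. The paper's proof is simply a more compressed version of yours, omitting the intermediate indices $N_{k,i}$ and jumping directly to the choice of $\nu(\omega_k)$.
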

\begin{proof} From condition \p{C3} it follows that $U_n(x,f_i)\rightrightarrows f_i(x)$ on each set $\omega_k$, for any $i=1,2,\ldots n$. Thus for every $\omega_k$ we may find an integer $\nu(\omega_k)\in \ZN$ such that $\nu(\omega_k)>R$ and
	\begin{equation*}
		|U_{\nu(\omega_k)} (x,f_i)-f_i(x)|<\varepsilon, \quad x\in \omega_k, \quad i=1,2,\ldots ,n.
	\end{equation*}
Thus we get \e{x40} and \e{x23}.
\end{proof}

\begin{lemma}\label{L12}
	Let an operator sequence $U_n$ satisfy \p{C1} and \p{C3}, $G\subset X$ be an open set with a regular partition $\Omega=\{\omega_k\}$ and let $\nu:\Omega\to \ZN$ be a mapping. Then if $C\subset G$ is a measurable set and a function $f\in L^\infty(X)$ satisfies
	\begin{equation}\label{x32}
		\|f\|_\infty\le 1,\quad \supp (f)\subset C,
	\end{equation}
	then it holds the inequality
	\begin{equation}\label{k5}
		|U_{(\Omega,\nu)}(x,f)|\le \sum_{j}c(\omega_j)|C\cap \omega_j|,\quad x\in X,
	\end{equation}
	where $c(\omega_j)>0$ are constants independent of $C$ and the function $f$. In fact, those depend only on the operators $U_n$, the open set $G$ and its partition $\omega_k$ (see \e{x46}).
\end{lemma}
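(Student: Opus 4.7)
My plan is to evaluate $U_{(\Omega,\nu)}(x,f)$ pointwise and, for each position $x$, to control the contribution of each piece $f\ZI_{\omega_j}$ of $f$ by separating the ``near'' and ``far'' locations $\omega_k$ relative to $\omega_j$, with the notion of near/far dictated by the envelope sets $V_k$ of the regular partition.

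For $x\notin G=\bigsqcup_j\omega_j$ every indicator in \e{36} vanishes, so $U_{(\Omega,\nu)}(x,f)=0$ and the bound is trivial. For $x\in G$ there is a unique index $k=k(x)$ with $x\in\omega_k$, reducing \e{36} to $U_{(\Omega,\nu)}(x,f)=U_{\nu(\omega_k)}(x,f)$. Since $\supp f\subset C\subset G$ and $\|f\|_\infty\le 1$, the decomposition $f=\sum_j f\ZI_{\omega_j}$ holds pointwise and converges in $L^1(X)$ (under the natural assumption $\mu(G)<\infty$ needed for $f\in L^1$), so by the $L^1\to M$ continuity \p{C1},
\begin{equation*}
U_{\nu(\omega_k)}(x,f)=\sum_{j\ge 1} U_{\nu(\omega_k)}(x,f\ZI_{\omega_j}).
\end{equation*}

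The heart of the argument is a per-term estimate $|U_{\nu(\omega_k)}(x,f\ZI_{\omega_j})|\le c(\omega_j)|C\cap\omega_j|$ uniform in $k$, $C$, and $f$. I would split into two cases. \textbf{Far case} ($V_j\cap V_k=\varnothing$): then $x\in\omega_k\subset V_k\subset V_j^c$, and since $\omega_j\Subset V_j$ by \e{x43} with $\mu(V_j)<\infty$ (which can be arranged in the construction of \lem{L0}), \lem{L1} applied with its ``$G$'' taken as $V_j$ and ``$F$'' as $\omega_j$ gives
\begin{equation*}
|U_{\nu(\omega_k)}(x,f\ZI_{\omega_j})|\le d_{V_j}(\omega_j)\,\|f\ZI_{\omega_j}\|_1\le d_{V_j}(\omega_j)\,|C\cap\omega_j|,
\end{equation*}
uniformly in $k$ and in $x\in V_j^c$. \textbf{Near case} ($V_j\cap V_k\ne\varnothing$): the local-finiteness \e{x44} restricts the admissible $k$'s to a finite set for the given $j$, and \p{C1} gives the crude
\begin{equation*}
|U_{\nu(\omega_k)}(x,f\ZI_{\omega_j})|\le \rho_{\nu(\omega_k)}\,\|f\ZI_{\omega_j}\|_1\le \rho_{\nu(\omega_k)}\,|C\cap\omega_j|.
\end{equation*}
Setting
\begin{equation*}
c(\omega_j)=\max\Bigl(d_{V_j}(\omega_j),\;\max_{k:\,V_k\cap V_j\neq\varnothing}\rho_{\nu(\omega_k)}\Bigr),
\end{equation*}
whose inner maximum is finite by local-finiteness and whose first term is finite by \lem{L1}, unifies both cases, and summing over $j$ yields \e{k5}.

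The main subtle point I anticipate is the legitimacy of pushing $U_{\nu(\omega_k)}$ through the countable decomposition $\sum_j f\ZI_{\omega_j}$; I would justify it by $L^1$-convergence of the partial sums together with \p{C1}. Everything else is a clean assembly of \lem{L1} in the far regime and the trivial $L^1\to M$ bound in the near regime, the splitting being enabled precisely by the regular-partition structure built in \lem{L0}.
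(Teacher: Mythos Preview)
Your proposal is correct and follows essentially the same route as the paper: the same near/far split according to whether $V_j\cap V_k=\varnothing$, the same use of \lem{L1} for the far terms and of \p{C1} for the near terms, and the resulting constant $c(\omega_j)=\max\bigl(d_{V_j}(\omega_j),\max_{i:\,V_i\cap V_j\neq\varnothing}\rho_{\nu(\omega_i)}\bigr)$ coincides with the paper's formula \e{x46}. Your explicit justification of the interchange $U_{\nu(\omega_k)}(x,f)=\sum_j U_{\nu(\omega_k)}(x,f\ZI_{\omega_j})$ via $L^1$-convergence and \p{C1}, and your remark that $\mu(V_j)<\infty$ is needed to invoke \lem{L1}, are points the paper leaves implicit.
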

\begin{proof}  For $x\not\in G$  \e{k5} trivially holds, since we have $U_{(\Omega,\nu)}(x,f)=0$ (see \e{36}). By the definition of regular partition we may fix open sets $V_k\supset \omega_k$, $k=1,2,\ldots$, satisfying \e{x42}. For $x\in G$ we can write
	\begin{equation}\label{x27}
		x\in \omega_k\text{ for a unique }k\ge1,
	\end{equation}
	and then
	\begin{align}
		|U_{(\Omega,\nu)}(x,f)|\le \sum_{j:\,V_j\cap V_k=\varnothing} &|U_{\nu(\omega_k)}(x,f\cdot\ZI_{\omega_j})|\\
		&+\sum_{j:\,V_j\cap V_k\neq \varnothing} |U_{\nu(\omega_k)}(x,f\cdot\ZI_{\omega_j})|=S_1(x)+S_2(x).\label{x34}
	\end{align}
	The condition $V_j\cap V_k=\varnothing$ implies $x\in\omega_k\Subset V_k\subset (V_j)^c$. So applying \lem{L1} and relations \e{x32}, for the first sum we obtain
	\begin{align}
		S_1	(x)\le \sum_{j:\,V_j\cap V_k=\varnothing}&\|f\cdot\ZI_{\omega_j}\|_{1}d_{V_j}(\omega_j)\\
		&\le \sum_{j:\,V_j\cap V_k=\varnothing}d_{V_j}(\omega_j)|C\cap \omega_j|.\label{x31}
	\end{align}
	Applying \p{C1} and \e{x32}, we have
	\begin{align}
		S_2(x)&\le \sum_{j:,V_j\cap V_k\neq \varnothing}\rho_{\nu(\omega_k)}|C\cap \omega_j|\\
		&\le \sum_{j:,V_j\cap V_k\neq \varnothing}\max\{\rho_{\nu(\omega_i)}:\,V_i\cap V_j\neq \varnothing \}|C\cap \omega_j|.\label{k4}
	\end{align}
	Combining \e{x34}, \e{x31} and \e{k4} we get \e{k5} with the constants
	\begin{equation}\label{x46}
		c(\omega_j)=\max\left\{\max\{\rho_{\nu(\omega_i)}:\,V_i\cap V_j\neq \varnothing \},d_{V_j}(\omega_j)\right\}.
	\end{equation}
	It remains just note that by \p{C1} and \lem{L1} we have $c(\omega_j)<\infty$ for any $j=1,2,\ldots$. 
\end{proof}
For an open set $A\subset X$ of bounded measure we denote
	\begin{equation*}
		\lambda(A)=A\cup \left\{x\in X: \limsup_{n\to \infty}|U_n(x,\ZI_A)| >0\right\}.
	\end{equation*}
	If an operator sequence $\{U_n\}$ satisfies \p{C4}, then
	\begin{equation*}
		\lim_{n\to \infty}U_n(x,\ZI_A)=0,\hbox { a.e. on } X\setminus A,
	\end{equation*}
	so we obtain
	\begin{equation}\label{lam}
		\mu(\lambda(A) \setminus A)=0.
	\end{equation}

	\begin{lemma}\label{L3}
		Let an operator sequence \e{1} satisfy \p{C1}, \p{C3} and \p{C4}. If open sets $A$ and $B$ in $X$ have finite measures and $\lambda(A)\subset B$,  then for any $\varepsilon>0$ there exists an open set  $G\subset B$ such that
		\begin{align}
			&\lambda(A)\subset G,\quad \lambda(G)\subset B,\label{b1}\\
			&\mu(G\setminus A)<\varepsilon.
		\end{align}
	\end{lemma}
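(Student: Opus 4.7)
The goal is to construct $G$ as an open neighborhood of $\lambda(A)$ inside $B$ that is simultaneously close to $A$ in measure and \emph{well inside} $B$, in the sense that $\lim_n U_n(x,\ZI_G)=0$ for every $x\notin B$. The measure part is a standard application of \lem{L11}; the real work is in the operator-theoretic containment $\lambda(G)\subset B$.

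First I would invoke \lem{L0} to fix a regular partition $\{\omega_k\}$ of $B$ with locally-finite open neighborhoods $V_k$ satisfying $\omega_k\Subset V_k\subset B$. By \lem{L1} each constant $d_{V_k}(\omega_k)$ is finite, so one can choose positive numbers $\varepsilon_k$ with $\sum_k\varepsilon_k<\varepsilon$ and $d_{V_k}(\omega_k)\varepsilon_k\le 2^{-k}$. Applying \lem{L11} to the measurable set $\lambda(A)\subset B=\cup_k\omega_k$ with these parameters produces an open set $G\supset\lambda(A)$ such that $\mu((G\setminus\lambda(A))\cap\omega_k)<\varepsilon_k$, and inspection of the proof of \lem{L11} shows $G\subset\cup_k V_k\subset B$. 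Together with \e{lam} and disjointness of the $\omega_k$, this yields $\mu(G\setminus A)=\sum_k\mu((G\setminus A)\cap\omega_k)<\sum_k\varepsilon_k<\varepsilon$.

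For the containment $\lambda(G)\subset B$, fix $x\in B^c$ and split $\ZI_G=\ZI_A+\ZI_{G\setminus A}$. The term $U_n(x,\ZI_A)$ tends to $0$ since $x\notin\lambda(A)$. For the excess, for each $K$ decompose
\begin{equation*}
\ZI_{G\setminus A}=\ZI_{(G\setminus A)\cap(\omega_1\cup\cdots\cup\omega_K)}+\sum_{k>K}\ZI_{(G\setminus A)\cap\omega_k}.
\end{equation*}
The first piece is supported in $\omega_1\cup\cdots\cup\omega_K$, and since $\omega_j\Subset V_j$ for each $j$ one has $\omega_1\cup\cdots\cup\omega_K\Subset V_1\cup\cdots\cup V_K\subset B$, so the convergence \e{x56} established in the proof of \lem{L1} gives $U_n(x,\cdot)\to 0$ on $(V_1\cup\cdots\cup V_K)^c\supset B^c$. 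For the tail, \lem{L1} bounds each term, uniformly in $n$ and in $x\in V_k^c\supset B^c$, by $d_{V_k}(\omega_k)\|\ZI_{(G\setminus A)\cap\omega_k}\|_1<d_{V_k}(\omega_k)\varepsilon_k\le 2^{-k}$, so the tail sum is at most $2^{-K}$. As $K$ is arbitrary, $\limsup_n|U_n(x,\ZI_{G\setminus A})|=0$ and hence $\lim_n U_n(x,\ZI_G)=0$, proving $\lambda(G)\subset B$.

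The main obstacle is the tension between the two requirements on $G$: closeness to $\lambda(A)$ in measure is routine via \lem{L11}, but $\lambda(G)\subset B$ demands control of $U_n(\cdot,\ZI_G)$ simultaneously at every $x\in B^c$ and uniformly in $n$. Reconciling the two is what forces the use of a regular partition of $B$ and the calibration of the \lem{L11} parameters $\varepsilon_k$ against the operator-theoretic weights $d_{V_k}(\omega_k)$ provided by \lem{L1}, so that the tail of the decomposition decays geometrically without inflating $\mu(G\setminus A)$.
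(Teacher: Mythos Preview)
Your proof is correct and follows essentially the same approach as the paper: take a regular partition $\{\omega_k\}$ of $B$, calibrate the \lem{L11} parameters against the constants from \lem{L1}, and for $x\in B^c$ split $\ZI_G=\ZI_A+\ZI_{G\setminus A}$, handling the excess $\ZI_{G\setminus A}$ via termwise convergence on the $\omega_k$ together with a summable uniform bound. The only cosmetic differences are that you use $d_{V_k}(\omega_k)$ where the paper uses $d_B(\omega_k)$ (both finite since $\omega_k\Subset V_k\subset B$), and you present the limsup argument as an explicit ``finite block plus geometric tail'' split rather than the paper's implicit dominated-convergence style; also, your appeal to \e{lam} to pass from $\mu((G\setminus\lambda(A))\cap\omega_k)$ to $\mu((G\setminus A)\cap\omega_k)$ is slightly more explicit than the paper's formulation.
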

	\begin{proof}
		Applying \lem{L0}, we find for the open set $B$ a regular partition $\Omega=\{\omega_k:\, k=1,2,\ldots\}$. By \lem{L1} and relation \e{x43} we conclude $d_B(\omega_k)<\infty$ for any $k=1,2,\ldots$. Applying \lem{L11}, we find an open set $G\subset B$ such that
		\begin{align}
			&\lambda(A)\subset G,\label{b3}\\ 
		   &\mu((G\setminus A)\cap \omega_k)<\varepsilon_k=\frac{\varepsilon}{2^k(1+d_B(\omega_k))}.\label{x18}
		\end{align}
The first relation in \e{b1} immediately follows from \e{b3}. Then we have 
\begin{equation}
\mu(G\setminus A)\le \sum_{k=1}^\infty \mu((G\setminus A)\cap \omega_k)<\sum_{k=1}^\infty \frac{\varepsilon}{2^k}=\varepsilon.
\end{equation}
 It remains to prove that $\lambda(G)\subset B$. Chose an arbitrary point $x\not\in B$. The relation $\lambda(A)\subset B$ implies 
\begin{equation}
	\lim_{n\to \infty}U_n(x,\ZI_A)=0.
\end{equation}
Thus, by the linearity of operator $U_n$ and the pairwise disjointness of the sets $\omega_k$ 
we can write
		\begin{align}
			\limsup_{n\to\infty }|U_n(x,\ZI_G)|&=\limsup_{n\to\infty }|U_n(x,\ZI_{G\setminus A})|\\
			&\le \limsup_{n\to\infty }\sum_{k=0}^\infty |U_n(x,\ZI_{(G\setminus A)\cap \omega_k})|\label{b9}
		\end{align}
	By property \p{C3} we have
	\begin{equation}\label{t1}
		\lim_{n\to\infty}U_n(x,\ZI_{(G\setminus A)\cap \omega_k})=0 \text{ for every }k\ge 1.
	\end{equation}
Besides by \lem{L1} and \e{x18} we can write
\begin{align}
|U_n(x,\ZI_{(G\setminus A)\cap \omega_k})|\le d_B(\omega_k)\|\ZI_{(G\setminus A)\cap \omega_k}\|_1< \varepsilon_k d_B( \omega_k)\le  \varepsilon/2^k.\label{t2}
\end{align}
Combining \e{b9}, \e{t1} and  \e{t2}, we obtain
		\begin{equation*}
			\limsup_{n\to\infty }U_n(x,\ZI_{G})=0,
		\end{equation*}
i.e. $x\not\in \lambda(G)$ and therefore $\lambda(G)\subset B$.
	\end{proof}
\begin{lemma}\label{L4}
	Let an operator sequence \e{1} satisfy \p{C1}, \p{C3} and \p{C4}. If $A$ and $B$ are open sets of finite measure in $(X,\mu)$ with $\lambda(A)\subset B$, then there exists a family of open sets
	\begin{equation*}
		G_r,\quad r\in \ZD=\left\{i/2^k,\, 0\le i\le 2^k,\, k=0,1,\ldots \right\}
	\end{equation*}
such that
	\begin{align}
		&G_1=A,\,G_0=B,\label{d1}\\
		& \lambda(G_r)\subset G_{r'}\text{ whenever } r>r'.\label{x47}
	\end{align}
\end{lemma}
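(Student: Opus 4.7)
The plan is to construct the family $\{G_r\}_{r\in\ZD}$ by induction on the dyadic level $k$, using \lem{L3} at each step to insert a new open set in between two previously constructed ones. This is a standard Urysohn-style interpolation, with \lem{L3} providing the key ``insertion lemma''.

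For the base case $k=0$ we set $G_1=A$ and $G_0=B$; the hypothesis $\lambda(A)\subset B$ gives $\lambda(G_1)\subset G_0$, which is the required ordering relation at level $0$. Suppose now that for some $k\ge 0$ we have already defined open sets $G_{i/2^k}$, for $i=0,1,\ldots,2^k$, all contained in $B$ (hence of finite measure), and satisfying $\lambda(G_{(i+1)/2^k})\subset G_{i/2^k}$ for every $0\le i\le 2^k-1$. To pass to level $k+1$ we need to define the ``new'' dyadic rationals of denominator $2^{k+1}$, namely $r=(2i+1)/2^{k+1}$ for $i=0,1,\ldots,2^k-1$. For each such $i$, we apply \lem{L3} with the pair $(A,B)$ replaced by $(G_{(i+1)/2^k},G_{i/2^k})$, which is legitimate since both are open with finite measure and $\lambda(G_{(i+1)/2^k})\subset G_{i/2^k}$ by the inductive hypothesis. \lem{L3} then produces an open set $G_{(2i+1)/2^{k+1}}\subset G_{i/2^k}$ satisfying
\begin{equation*}
    \lambda\bigl(G_{(i+1)/2^k}\bigr)\subset G_{(2i+1)/2^{k+1}},\qquad \lambda\bigl(G_{(2i+1)/2^{k+1}}\bigr)\subset G_{i/2^k}.
\end{equation*}
These two inclusions are exactly what is needed to extend the ordering condition $\lambda(G_r)\subset G_{r'}$ to all consecutive pairs $r>r'$ at level $k+1$.

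From consecutive pairs one obtains all pairs by a short transitivity argument: if $r>r'$ in $\ZD$ and both appear at level $k+1$, pick the consecutive $r>r''\ge r'$; then $\lambda(G_r)\subset G_{r''}\subset G_{r'}$, the latter inclusion because $G_{r''}\subset G_{r'}$ follows from $\lambda(G_{r''})\subset G_{r'}$ and the fact that $G_{r''}\subset \lambda(G_{r''})$ by the definition of $\lambda$. Iterating this gives \e{x47} for arbitrary $r>r'$ in the already-constructed levels, and in particular the relations persist as we go to higher levels because the family is only being extended, never modified.

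I do not see any serious obstacle: the only delicate point is to verify that the hypotheses of \lem{L3} are met at every interpolation step, and this is built into the inductive ordering condition. The measure-control conclusion $\mu(G\setminus A)<\varepsilon$ of \lem{L3} is irrelevant here; we just need openness, the inclusion $G\subset B$ (which keeps everything of finite measure), and the two $\lambda$-inclusions. The countable union $\ZD$ is reached in countably many steps, which finishes the construction.
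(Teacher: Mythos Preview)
Your proposal is correct and follows essentially the same approach as the paper: set $G_1=A$, $G_0=B$, and inductively apply \lem{L3} to each consecutive pair at level $k$ to produce the intermediate sets at level $k+1$. The paper's own proof is terser---it simply asserts that the enlarged family at level $k+1$ ``obviously'' satisfies \e{x47}---whereas you spell out the transitivity step via $G_r\subset\lambda(G_r)$, which is a welcome clarification rather than a different idea.
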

\begin{proof}
	Define $G_1=A$, $G_0=B$ and apply \lem{L3} to the pair of open sets $G_1,\,G_0$. With this we find an open set $G=G_{1/2}$, satisfying \e{b1}. Then we apply induction. Denote
	\begin{equation*}
		\ZD_k=\left\{\frac{i}{2^k}:\, 0\le i\le 2^k\right\},
	\end{equation*}
	and suppose that we have already defined $G_r$ for each $r\in\ZD_k$ such that \e{x47} holds whenever $r,r'\in \ZD_k$. Applying \lem{L3} to each pair of sets $G_{i/2^k},\,G_{(i+1)/2^k}$, we obtain intermediate sets $G_{(2i+1)/2^{k+1}}$, $0\le i\le 2^k-1$. Obviously a new family of sets $\{G_r,\, r\in\ZD_{k+1}\}$ will also satisfy \e{x47}. 
	Continuing the induction procedure we finally get sets $G_r$ defined for all
	$r\in\ZD$, and satisfying \e{x47} for full range of dyadic indexes $r,r'$. 
\end{proof}
\begin{lemma}\label{L6}
Let operators \e{1} satisfy \p{C1}-\p{C4}. If $\varepsilon >0$, $G\subset X$ is a finite measure open set and $E\subset G$ is a null-set, then there exists an open set $A$, with $E\subset A \subset G$, and a function $h(x)$, $x\in X$, such that
	\begin{align}
		&\mu(A)<\varepsilon,\label{x38}\\
		&\supp (h)\subset G,\quad h(x)=1,\,x\in A,\label{f1}\\
		& 0\le h(x)\le 1,\quad x\in X,\label{f2}\\
		&|U_n(x,h)|\le \varepsilon ,\quad x\in G^c,\quad n=1,2,\ldots, \label{f3}\\
		&U_n(x,h)\to h(x) \hbox{ for every } x\in X.\label{f4}
	\end{align}
\end{lemma}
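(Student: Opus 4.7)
The plan is to combine Lemma \ref{L4} with a Urysohn-type ``sup'' construction: produce a nested dyadic family of open sets interpolating between a small open neighbourhood $A$ of $E$ and a slightly larger open set $F\Subset G$, and then define $h$ as the level function of this family. The principal obstacle will be condition \e{f4}, namely pointwise convergence of $U_n(x,h)$ to $h(x)$ at every single $x\in X$, not merely almost everywhere as \p{C4} alone would supply.

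\emph{Preparatory set-up and definition of $h$.} Since $E$ is a null subset of the open finite-measure set $G$, Lemmas \ref{L13}, \ref{L2} and \ref{L10} let me fix an open $F_0$ with $E\subset F_0$ and $F_0\Subset G$. Lemma \ref{L1} supplies $M:=d_G(F_0)<\infty$; the monotonicity $d_G(F)\le d_G(F_0)$ for $F\subset F_0$ (immediate from the definition in Lemma \ref{L1}) allows me to shrink $F_0$ to an open $F$ with $E\subset F$ and $\mu(F)<\varepsilon/M$, whence $d_G(F)\mu(F)<\varepsilon$. I then choose an open $A$ with $E\subset A\Subset F$ and $\mu(A)<\varepsilon$. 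Since $\overline A\subset F$, the indicator $\ZI_A$ is identically $0$ on the open set $X\setminus\overline A$, and \p{C3} gives $U_n(\cdot,\ZI_A)\rightrightarrows 0$ on $F^c\Subset X\setminus\overline A$; in particular $\lambda(A)\subset F$. Lemma \ref{L4} applied to $A\subset F$ then furnishes a family $\{G_r:r\in\ZD\}$ of open sets with $G_1=A$, $G_0=F$, and $\lambda(G_r)\subset G_{r'}$ whenever $r>r'$. I set
\begin{equation*}
h(x)=\sup\{r\in\ZD:x\in G_r\},\qquad \sup\varnothing:=0.
\end{equation*}

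The nesting immediately gives $0\le h\le 1$, $h\equiv 1$ on $A$, and $\supp h\subset\overline F\subset G$, so \e{x38}, \e{f1} and \e{f2} hold. For \e{f3}, the bound $\|h\|_\infty\le 1$ combined with $\supp h\subset F\Subset G$ let Lemma \ref{L1} deliver
\begin{equation*}
|U_n(x,h)|\le d_G(F)\|h\|_1\le d_G(F)\mu(F)<\varepsilon,\qquad x\in G^c,\ n\ge 1.
\end{equation*}

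For the everywhere convergence \e{f4} I would use the staircase approximation
\begin{equation*}
h_N(x)=2^{-N}\sum_{k=1}^{2^N}\ZI_{G_{k/2^N}}(x),
\end{equation*}
which by the nesting satisfies $h_N\le h<h_N+2^{-N}$, hence $\|h-h_N\|_\infty\le 2^{-N}$. Splitting $U_n(x,h)-h(x)=U_n(x,h-h_N)+[U_n(x,h_N)-h_N(x)]+[h_N(x)-h(x)]$, the first and third summands are of size $O(2^{-N})$ via \p{C2}. For the middle I argue indexwise: if $k/2^N<h(x)$, then $x\in G_{k/2^N}$, and \p{C3} applied at the interior point $x$ of the open set $G_{k/2^N}$ yields $U_n(x,\ZI_{G_{k/2^N}})\to 1$; if $k/2^N>h(x)$, a dyadic $r\in(h(x),k/2^N)$ satisfies $x\notin G_r$, and the $\lambda$-nesting $\lambda(G_{k/2^N})\subset G_r$ forces $U_n(x,\ZI_{G_{k/2^N}})\to 0$. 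At most one exceptional index $k_0$ can satisfy $k_0/2^N=h(x)$, and for it \p{C2} only gives the crude bound $|U_n(x,\ZI_{G_{k_0/2^N}})|\le\varrho$. Passing to $\limsup_{n\to\infty}$, all $2^N-1$ good indices contribute zero, so $\limsup_n|U_n(x,h)-h(x)|=O(2^{-N})$, and sending $N\to\infty$ proves \e{f4} at every $x\in X$. The principal difficulty lies precisely in this exceptional index: the $\lambda$-nesting from Lemma \ref{L4} is silent at points $x\notin G_{k_0/2^N}$ that lie in $G_{r'}$ for every dyadic $r'<k_0/2^N$, and the resolution is quantitative---at each scale $N$ there is at most one such index, so the uncontrolled contribution carries a $2^{-N}$ weight and vanishes.
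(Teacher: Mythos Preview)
Your Urysohn-type construction and the staircase argument for \e{f4} are essentially the paper's own, and that part is fine. The genuine gap is at the very first step: you claim that Lemmas~\ref{L13}, \ref{L2}, \ref{L10} let you choose an open $F_0$ with $E\subset F_0\Subset G$, and later an open $A$ with $E\subset A\Subset F$. In general no such sets exist. Take $X=\ZR$, $G=(0,1)$ and the null set $E=\{1/n:n\ge 2\}$; any set $F_0$ with $F_0\Subset G$ must satisfy $F_0\subset[\delta,1-\delta]$ for some $\delta>0$, and then $E\not\subset F_0$. The hypotheses of the lemma allow $E$ to accumulate at $\partial G$, and the relation $\Subset$ forbids exactly that. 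Since your bound for \e{f3} reads $|U_n(x,h)|\le d_G(F)\,\mu(F)$ and Lemma~\ref{L1} needs $F\Subset G$ to make $d_G(F)$ finite, the whole estimate collapses once $F\Subset G$ is unavailable.

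This is precisely why the paper does not try to trap $E$ in a single set compactly inside $G$. Instead it takes a \emph{regular partition} $\Omega=\{\omega_k\}$ of $G$ (Lemma~\ref{L0}), so that each piece satisfies $\omega_k\Subset G$ and hence $d_G(\omega_k)<\infty$, and then uses Lemma~\ref{L11} to produce an open $B\supset E$ with $\mu(B\cap\omega_k)<\varepsilon/\bigl(2^k(1+d_G(\omega_k))\bigr)$ for every $k$. The bound \e{f3} then follows from the termwise estimate $|U_n(x,h)|\le\sum_k d_G(\omega_k)\,\mu(B\cap\omega_k)<\varepsilon$. After that the paper applies (the argument of) Lemma~\ref{L3} to get $A$ with $E\subset A$ and $\lambda(A)\subset B$, invokes Lemma~\ref{L4}, and defines $h$ exactly as you do; the verification of \e{f4} via a dyadic staircase and the $\lambda$-nesting is the same as yours. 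So your outline is right from the moment the family $\{G_r\}$ exists---what is missing is the partition device that makes the construction of $B$ and $A$ (and the estimate \e{f3}) possible without any ``$E\Subset$'' hypothesis.
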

\begin{proof}
Applying \lem{L0}, we find a regular partition $\Omega=\{\omega_k:\,k\in \ZN \}$ for $G$.
Then, using  \lem{L11} we define an open set $B$, satisfying
	\begin{align}
		&E\subset B\subset G,\label{x19}\\ 
		&\mu(B\cap \omega_k)<\varepsilon_k=\frac{\varepsilon}{2^k(1+d_G(\omega_k))}.\label{x20}
	\end{align} 
 Applying \lem{L3}, we get an open set $A$, satisfying  $E\subset A$, $\lambda(A)\subset B$. Bound \e{x20} implies $\mu(B)<\varepsilon $ and so \e{x38}. 
	According to \lem{L4}, there exists a family of open sets $\{G_r:\,r\in \ZD\}$, satisfying \e{d1} and \e{x47}.  Define a function
	\begin{equation}\label{c12}
		h(x)=\left\{
		\begin{array}{ccl}
			\sup\{r:\, x\in G_r\} &\hbox{ if }& x\in B,\\
			0 &\hbox{ if }& x\in X\setminus B.
		\end{array}
		\right.
	\end{equation}
	Obviously, $h(x)$ satisfies conditions \e{f1}, \e{f2} and moreover, $\supp (h) \subset B$. Using \lem{L1} and \e{x20}, for any $x\in G^c$  we obtain
	\begin{align}
		|U_n(x,h)|&\le \sum_{k=1}^\infty |U_n(x, h\cdot \ZI_{\omega_k})|\\
		&<\sum_{k=1}^\infty d_G(\omega_k)|B\cap \omega_k|\le \sum_{k=1}^\infty\frac{\varepsilon}{2^k}=\varepsilon,
	\end{align}
	that gives \e{f3}. It remain to check condition \e{f4}. To this end we consider the function
	\begin{equation}\label{c4}
		p(x)=\ZI_{G_{r_0}}(x)+\sum_{k=0}^{m-1}r_k\ZI_{G_{r_{k+1}}\setminus G_{r_k}}(x),
	\end{equation}
	where the dyadic rational numbers $r_k\in \ZD$ satisfy 
	\begin{equation}\label{x21}
		1=r_0>r_1>\ldots>r_m=0,\quad r_{k}-r_{k+1}<\varepsilon.
	\end{equation}
Observe that
	\begin{equation}\label{c11}
		|h(x)-p(x)|<\varepsilon,\quad x\in X.
	\end{equation}
Indeed, we have
	\begin{align}
		&h(x)=p(x)=1,\quad x\in G_1 =A,\label{c18}\\
		&h(x)=p(x)=0,\quad x\in X\setminus G_0=X\setminus B.\label{c19}
	\end{align}
	If $x\in G_0\setminus G_1$, then we have $x\in G_{r_{k+1}}\setminus G_{r_k}$ for some $k=0,1,\ldots ,m-1$.
	Then from the definition of $h(x)$ it follows that 
	\begin{equation}\label{x39}
		r_k\ge h(x)\ge r_{k+1},\quad p(x)=r_k.
	\end{equation}
	Thus we get \e{c11}. From \e{c11} and the bound $\|U_n\|_{L^\infty\to M}\le \varrho $ (see property \p{C2}) it follows that
	\begin{multline}\label{c10}
		\limsup_{n\to\infty}|U_n(x,h)-h(x)| \le \limsup_{n\to\infty}|U_n(x,h-p)-h(x)+p(x)|\\
		+\limsup_{n\to\infty}|U_n(x,p)-p(x)|
		\le (\varrho+1)\varepsilon +\limsup_{n\to\infty}|U_n(x,p)-p(x)|.
	\end{multline}
	Consider the following cases.
	
	\noindent
	\underline{\it Case 1}: $x\in G_0=G_{r_0}$. We have $h(t)=1$ as $t\in G_0$.  By \lem{L13} there exists a closed set $F$ such that $x\in F\Subset G_0$. Thus, from property \p{C3} we obtain
	\begin{equation*}
		\lim_{n\to\infty}U_n(x,h)=1=h(x).
	\end{equation*}
	
	\noindent
	\underline{\it Case 2}: $x\in B\setminus G_0$. In this case we have
	\begin{equation}\label{c13}
		x\in G_{r_{k+1}}\setminus G_{r_k}
	\end{equation}
	for some $k=0,1,\ldots ,m-1$. Again by \p{C3} it follows that
	\begin{equation}\label{x24}
		\lim_{n\to\infty}U_n(x,\ZI_{G_{r_i}})=1,\quad i\ge k+1.
	\end{equation}
	On the other hand, using the property $\lambda(G_{r_i})\subset G_{r_{i+1}}$, we can write
	\begin{equation}\label{x25}
		\lim_{n\to\infty}U_n(x,\ZI_{G_{r_i}})=0,\quad i\le k-1,
	\end{equation}
	whenever $k\ge 1$. Thus we obtain
	\begin{multline}\label{c9}
		\lim_{n\to\infty}U_n(x,\ZI_{G_{r_{i+1}}\setminus G_{r_i}}) =\lim_{n\to\infty}U_n(x,\ZI_{G_{r_{i+1}}})\\- \lim_{n\to\infty}U_n(x,\ZI_{G_{r_i}})=0,\, i\in \ZN,\,i\neq k,k-1.
	\end{multline}
Thus, first using \e{x39}, \e{c9} then \e{x24}, \e{x25} and the boundedness property \p{C2}, in the case $k>0$ we get
\begin{align}
		\limsup_{n\to\infty}&|U_n(x,p)-p(x)|\\
		&=\limsup_{n\to\infty}|r_{k-1}U_n(x,\ZI_{G_{r_{k}}\setminus G_{r_{k-1}}})+r_kU_n(x,\ZI_{G_{r_{k+1}}\setminus G_{r_k}})-r_k|\\
		&\le \varrho \varepsilon+\limsup_{n\to\infty}|r_{k}U_n(x,\ZI_{G_{r_{k}}\setminus G_{r_{k-1}}})+r_kU_n(x,\ZI_{G_{r_{k+1}}\setminus G_{r_k}})-r_k|\\
		&= \varrho \varepsilon+r_k\limsup_{n\to\infty}|U_n(x,\ZI_{G_{r_{k+1}}}) -U_n(x,\ZI_{G_{r_{k-1}}}) -1|\\
		&= \varrho \varepsilon.
\end{align}
If $k=0$, then similarly we have
\begin{equation}
	\limsup_{n\to\infty}|U_n(x,p)-p(x)|\le  \varrho \varepsilon+r_0\limsup_{n\to\infty}|U_n(x,\ZI_{G_{r_1}})  -1|= \varrho \varepsilon.
\end{equation}
	Combining the last two estimates with \e{c10}, we conclude $U_n(x,h)\to h(x)$.
	
	\noindent
	\underline{\it Case 3}: $x\in X\setminus B$. From the relations $\lambda(G_{r_i})\subset G_{r_m}=B$, $i=1,2,\ldots ,m-1$, we have
	\begin{equation*}
		\lim_{n\to\infty}U_n(x,\ZI_{G_{r_{i+1}}\setminus G_{r_i}})=0,\quad i=1,2,\ldots ,m-2,
	\end{equation*}
	and therefore using also \e{c4} and the equation $r_{m-1}<\varepsilon$ (see \e{x21}), we obtain
	\begin{equation*}
		\limsup_{n\to\infty}|U_n(x,p)|=\limsup_{n\to\infty}r_{m-1}|U_n(x,\ZI_{G_{r_m}\setminus G_{r_{m-1}}})|<\varepsilon \varrho,
	\end{equation*}
	which completes the proof of \e{f4}.

\end{proof}

\begin{lemma}\label{L8}
If operators \e{1} satisfy \p{C1}-\p{C4}, then for every $G_\delta$ null-set $E\subset X$ and $\varepsilon>0$, there exists a function $g\in M(X)$, satisfying
\begin{align}
	&\mu(\supp (g))<\varepsilon,\label{g4}\\
		&0\le g(x)\le 1,\label{g1}\\
	    &U_n(x,g)\to g(x),\quad  x\in X\setminus E,	\label{g2}\\
	   & \limsup_{n\to\infty}U_n(x,g)\ge 1,\quad \liminf_{n\to\infty}U_n(x,g)\le 0, \quad x\in E.\label{g0}
\end{align}
\end{lemma}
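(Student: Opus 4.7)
Since $E$ is $G_\delta$ of measure zero, by \lem{L10} I would begin by writing $E=\bigcap_{m\ge 1}G_m^0$ for a decreasing sequence of open sets with $\mu(G_1^0)<\varepsilon/2$. Then I would apply \lem{L6} inductively along this chain: at stage $m$, with null set $E$ inside $G_m^0$ and parameter $\varepsilon_m\downarrow 0$, produce an open set $A_m$ with $E\subset A_m\subset G_m^0$ and $\mu(A_m)<\varepsilon_m$, together with a function $h_m\in[0,1]$ satisfying $\supp h_m\subset G_m^0$, $h_m\equiv 1$ on $A_m$, $|U_n(x,h_m)|\le\varepsilon_m$ for $x\notin G_m^0$ and all $n$, and $U_n(x,h_m)\to h_m(x)$ at every $x\in X$. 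Choosing $G_{m+1}^0\subset A_m$ at each step forces $h_{m+1}\le h_m$ and $A_{m+1}\subset A_m$, so the $(h_m)$ form a monotone decreasing sequence of building blocks equal to $1$ on neighborhoods of $E$.

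The function $g$ would then be assembled as a bounded series in the spirit of a Kolmogorov-type diagonal construction: along a lacunary index sequence $\{m_k\}$, I would form $g$ from the non-negative differences $h_{m_k}-h_{m_k+1}$ (having pairwise disjoint annular supports provided the $m_k$ are spaced enough) weighted by coefficients $a_k\in[0,1]$, so that $0\le g\le 1$, $\supp g\subset G_1^0$ giving item~1, and $g\equiv 0$ on $E$ (because all $h_m=1$ there). For $x\notin E$, truncating at index $K$ gives a partial sum $g_K$ for which \lem{L6} yields $U_n(x,g_K)\to g_K(x)$ as $n\to\infty$, while the tail satisfies
\begin{equation*}
|U_n(x,g)-U_n(x,g_K)|\le \rho_n\|g-g_K\|_1
\end{equation*}
by \p{C1}, with $\|g-g_K\|_1$ controlled by the fast-shrinking $\mu(G_{m_k}^0)$. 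A diagonal choice $K=K(n)\to\infty$ slowly enough that $\rho_n\|g-g_{K(n)}\|_1\to 0$, combined with the off-support bounds $|U_n(x,h_{m_k})|\le\varepsilon_{m_k}$ on $(G_{m_k}^0)^c$, then delivers $U_n(x,g)\to g(x)$ for every $x\in X\setminus E$.

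The main obstacle, and the heart of the lemma, is producing the oscillation $\limsup U_n(x,g)\ge 1$ and $\liminf U_n(x,g)\le 0$ at every $x\in E$. The mechanism is a double-limit failure: for each fixed $m$ and $x\in E$ one has $U_n(x,h_m)\to 1$ as $n\to\infty$, but $g$ is assembled from infinitely many $h_m$'s in such a way that the two limit operations ``sum over layers'' and ``$n\to\infty$'' do not commute. By calibrating the weights $a_k$, the lacunary gaps $m_{k+1}-m_k$, and the parameters $\varepsilon_m$ against the growth of $\rho_n$, one arranges that for each $x\in E$ there are infinitely many $n$'s along which the effective value $U_n(x,g)$ is close to the ``high'' level $1$ (seeing a single transition layer where $U_n(x,h_{m_k})\approx 1$ but $U_n(x,h_{m_k+1})\approx 0$) and infinitely many along which it is close to $0$ (seeing either no transition layer or telescoping cancellation across completed pairs). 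Carrying out this pointwise diagonalization uniformly over the uncountable set $E$, using only the pointwise (not uniform) convergence supplied by \lem{L6}, is the delicate technical step; it is here that the careful nested construction of the $A_m$, $G_m^0$ and $\varepsilon_m$ pays off.
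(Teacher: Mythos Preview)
Your outline captures the right skeleton—nested open neighborhoods of $E$, building blocks $h_m$ from \lem{L6}, and an alternating/telescoping assembly—but it has a genuine gap precisely at the point you flag as ``the delicate technical step.'' Using only the pointwise convergence $U_n(x,h_m)\to 1$ for $x\in E$ supplied by \lem{L6}, there is no way to produce the oscillation at \emph{every} $x\in E$. The obstruction is this: to witness $U_n(x,g)\approx 1$ you need an index $n$ with $U_n(x,h_i)\approx 1$ for $i\le k$, and such an $n$ exists but depends on $x$; meanwhile the tail $\sum_{i>k}(-1)^{i+1}U_n(x,h_i)$ is only controlled by $\rho_n\sum_{i>k}\|h_i\|_1$, and since $\rho_n$ can grow and $n=n(x)$ is unbounded over the uncountable set $E$, no advance choice of $\|h_i\|_1$ can make this tail small uniformly in $x$. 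Your proposed calibration ``against the growth of $\rho_n$'' cannot succeed because the relevant $n$'s are not known until $x$ is fixed.

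The paper resolves this with machinery you do not invoke: regular partitions $\Omega_k$ of the sets $A_k$ and the associated piecewise operators $U_{(\Omega_k,\nu_k)}$ (see \lem{L7} and \lem{L12}). At stage $k$ one uses \lem{L7} to choose, cell by cell, indices $\nu_k(\omega)$ so that $|U_{\nu_k(\omega)}(x,h_i)-1|<1/k^2$ for all $x\in\omega$ and $i\le k$; this makes the witness index depend on the cell containing $x$ rather than on $x$ itself, converting pointwise control into local uniform control. Then, crucially, before defining $h_{k+1}$ one shrinks its support (via \lem{L11} and \lem{L12}) so that $|U_{(\Omega_l,\nu_l)}(x,h_{k+1})|<1/(k+1)^2$ for every previously fixed $l\le k$ and every $x$; this kills the tail at the already-chosen witness indices uniformly, without any reference to $\rho_n$. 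The interleaving of these two steps (conditions h7 and h8 in the paper) is what your sketch is missing, and it is not a detail that can be filled in by a diagonal argument alone. Your treatment of the convergence on $X\setminus E$ is also muddled—the $\rho_n\|g-g_K\|_1$ bound and the ``$K=K(n)$'' device are unnecessary and do not give convergence of the full sequence; the off-support bounds $|U_n(x,h_i)|\le\varepsilon_i$ for $x\notin A_{i-1}$ that you mention are in fact all one needs there, exactly as in the paper.
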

\begin{proof}
We have $E=\cap_{k=1}^\infty E_k$, where $E_k\subset X$ are open sets and we can also suppose that each 
\begin{equation}\label{x68}
	\mu(E_k)<\varepsilon/2^k.
\end{equation} 
We claim there are functions $h_k(x)$, $x\in X$, and open sets $A_k\subset X$, $k=1,2,\ldots,$ satisfying the following conditions h1)-h8):

\vspace{2mm}
h1) $E\subset A_k\subset E_k$, $A_k\subset A_{k-1}$, $k\ge 1$ ($A_0=X$),

\vspace{2mm}
h2) $\supp h_k(x)\subset A_{k-1}$, $h_k(x)=1$, $x\in A_k$, $k\ge 1$,

\vspace{2mm}
h3) $0\le h_k(x)\le 1$, $x\in X$, $k\ge 1$,

\vspace{2mm}
h4) $|U_n(x,h_k)|<2^{-k}$, $x\in A_{k-1}^c$, $k\ge 1$,

\vspace{2mm}
h5) $U_n(x,h_k)\to h_k(x)$ as $n\to\infty $, $x\in X$.

\vspace{2mm}
\noindent
For any open set $A_k$ there is a regular partition $\Omega_k$ and a function $\nu_k:\Omega_k\to\ZN$, satisfying 

\vspace{2mm}
h6) $\min_{\omega\in\Omega_k}\nu_k(\omega)>k$ for any $k=1,2,\ldots$.

\vspace{2mm}
h7) $|U_{(\Omega_k,\nu_k)}(x,h_i)-1|<\frac{1}{k^2}$, $\quad x\in A_k$, $i\le k$,

\vspace{2mm}
h8) $|U_{(\Omega_i,\nu_i)}(x,h_k)|<\frac{1}{k^2}$, $\quad x\in X$,$\quad i<k$.

\vspace{2mm}
\noindent
We realize this construction by induction. Applying \lem{L6}, we find an open set $A_1$, with $E\subset A_1\subset E_1$, and a function $h_1(x)$, $x\in X$ such that
\begin{align*}
	&h_1(x)=1,\,x\in A_1,\\
	& 0\le h_1(x)\le 1,\quad x\in X,\\
	&U_n(x,h_1)\to h_1(x) \hbox{ for all } x\in X.
\end{align*}
By \lem{L7} there exists a regular partition $\Omega_1$ of the open set $A_1$ and a function $\nu_1:\Omega_1\to\ZN$ such that
\begin{equation*}
	|U_{(\Omega_1,\nu_1)}(x,h_1)-1|<1,\quad x\in A_1.
\end{equation*}
This gives the base of induction. Now suppose that we have already chosen sets $A_k$ and functions $h_k(x)$, sarisfying h1)-h8) for $k=1,2,\ldots ,p$. Let $c_k(\omega )$, $\omega\in\Omega_k$, be the constants from \e{k5}, corresponding to the regular partition $\Omega_k$ of the set $A_k$, $k=1,2,\ldots ,p$. For any $l=1,2,\ldots,p$ we fix a collection of positive numbers $\{\varepsilon_l(\omega)>0:\,\omega\in \Omega_l\}$, satisfying
\begin{equation}
	\sum_{\omega\in \Omega_l}\varepsilon_l(\omega)<\frac{1}{(p+1)^2}.
\end{equation} 
Then, applying \lem{L11}, for any $l\le p$ we chose an intermediate open set  $C_l$, satisfying
\begin{align}
	&E\subset C_l\subset A_p\cap E_{p+1},\\
	&|C_l\cap \omega |<\frac{\varepsilon_l(\omega)}{c_l(\omega )},\quad \omega \in\Omega_l,\,l=1,2,\ldots ,p.
\end{align}
Denote $C=\cap_{l=1}^pC_l$. Thus, applying \lem{L12}, for any function $f$ satisfying \e{x32},
we get
\begin{align}
	|U_{(\Omega_l,\nu_l)}(x,f)|&\le \sum_{\omega \in\Omega_l}c_l(\omega )|C\cap \omega |\\
	&\le\sum_{\omega \in\Omega_l}c_l(\omega )|C_l\cap \omega |\le \sum_{\omega \in\Omega_l}\varepsilon_l(\omega)< \frac{1}{(p+1)^2},\label{g5}
\end{align}
which holds for every $x\in X$ and $l\le p$. Then using \lem{L6}, we find an open set $A_{p+1}$ and a function $h_{p+1}(x)$, $x\in X$, such that
\begin{align*}
	&E\subset A_{p+1}\subset C\subset A_p\cap E_{p+1},\\
	&\supp h_{p+1}\subset C,\quad h_{p+1}(x)=1,\,x\in A_{p+1},\\
	& 0\le h_{p+1}(x)\le 1,\quad x\in X,\\
	&|U_n(x,h_{p+1})|\le 2^{-p-1} ,\quad x\in (A_p)^c,\\
	&U_n(x,h_{p+1})\to h_{p+1}(x) \hbox{ for all } x\in X.
\end{align*}
Thus we obtain conditions h1)-h5) for $k=p+1$. From \e{g5} we can also get the inequality
\begin{equation*}
	|U_{(\Omega_l,\nu_l)}(x,h_{p+1})|\le \frac{1}{(p+1)^2},\, x\in X, \,l\le p,
\end{equation*}
which implies h8) in the case $k=p+1$.  Then, applying \lem{L7}, we find a locally-finite partition $\Omega_{p+1}$ for $A_{p+1}$ and a mapping $\nu_{p+1}:\Omega_{p+1}\to\ZN$ satisfying h6) and h7) for $k=p+1$. This completes the induction. From h1) we obtain
\begin{equation*}
	E=\cap_{i=1}^\infty A_i.
\end{equation*}
Define
\begin{equation}\label{g6}
	g(x)=\sum_{i=1}^\infty (-1)^{i+1}h_i(x) \text{ if } x\in X\setminus E
\end{equation}
and $g(x)=0$ if $x\in E$. Observe that series \e{g6} converges in $L^1(X)$, since by conditions h1), h2), h3) and \e{x68} it follows that $\|h_k\|_1<\varepsilon 2^{-k+1}$. Thus by the boundedness of the operator $U_n:L^1(X)\to M(X)$ we conclude
\begin{equation}\label{g12}
	U_n(g)=\sum_{i=1}^\infty (-1)^{i+1}U_n(h_i),
\end{equation}
where the series converges uniformly. Clearly, h1) and \e{x68} imply \e{g4}. Fix a point $x\in E^c$. We have
\begin{equation}\label{g8}
	x\in A_{k-1}\setminus A_{k},
\end{equation}
for some $k\ge 1$. Taking into account h2), the latter implies
\begin{align}
	&h_i(x)=0,\quad i>k,\label{g9}\\
	&h_i(x)=1,\quad i<k.\label{g10}
\end{align}
Thus we obtain
\begin{equation}\label{g11}
	g(x)= \sum_{i=1}^k(-1)^{i+1}h_i(x)=\sum_{i=1}^{k-1} (-1)^{i+1}+(-1)^{k+1}h_k(x).
\end{equation}
If $k$ is even, then $g(x)=1-h_k(x)$, for an odd $k$ we have $g(x)=h_k(x)$. From this and property h3) of function $h_k(x)$, we obtain condition \e{g1}. Then condition h4) implies
\begin{equation}\label{x11}
	|U_n(x,h_i)|\le 2^{-i},\quad i>k.
\end{equation}
Thus, using \e{g12}, \e{g11} as well as h5), for $m>k$ we get
\begin{multline*}
	\limsup_{n\to\infty}|U_n(x,g)-g(x)|=\limsup_{n\to\infty} |\sum_{i=m}^\infty (-1)^{i+1}U_n(x,h_i)|\\
	\le \sum_{i=m}^\infty|U_n(x,h_i)|\le\sum_{i=m}^\infty 2^{-i}=2^{-m+1}.
\end{multline*}
Since $m$ can be arbitrarily bigger, this implies condition \e{g2}.
To prove \e{g0}, now suppose $x\in E$. Then we have $x\in A_k$ for all $k=1,2,\ldots $. Using this, for any $k=1,2,\ldots$ we may fix a set $\omega_k\in\Omega_k$ such that $\omega_k\ni x$. By condition h8) we have
\begin{equation}\label{x14}
	|U_{\nu_k(\omega_k)}(x,h_i)|\le \frac{1}{i^2},\quad i>k,
\end{equation}
and form h7) it follows that
\begin{equation}\label{x15}
	|U_{\nu_k(\omega_k)}(x,h_i)-1|\le  \frac{1}{k^2},\quad i\le k.
\end{equation}
Thus, using \e{g12}, we obtain
\begin{multline*}\begin{split}
		|U_{\nu_k(\omega_k)}(x,g)-\sum_{i=1}^k(-1)^{i+1}|\\
		&\le \sum_{i=1}^k|U_{\nu(\omega_k)}(x,h_i)-1|+\sum_{i=k+1}^\infty |U_{\nu(\omega_k)}(x,h_i)|\\
		&\le k\cdot \frac{1}{k^2}+\sum_{i=k+1}^\infty \frac{1}{i^2}<\frac{2}{k}.
	\end{split}
\end{multline*}
Since by h6) we have $\nu_k(\omega_k)\to\infty$ and the sum $\sum_{i=1}^k(-1)^{k+1}$ takes values $0$ or $1$ alternatively, we get \e{g0} for any $x\in E$, finalizing the proof of lemma.
\end{proof}
\section{Proof of theorems}
\begin{proof}[Proof of \trm{T1}]
	Suppose $E$ is a $G_{\delta\sigma}$ null-set and we have $E=\bigcup_{k=1}^\infty E_k$,
	where $E_k$ are $G_\delta$ null-sets and we may additionally suppose that $E_k\subset E_{k+1}$.  Applying \lem{L8}, we attach to each $E_k$ a function $g_k(x)$ such that
	
		\vspace{2mm}
	g1) $\mu(\supp (g_k))<\varepsilon/2^k$,
	
		\vspace{2mm}
	g2) $0\le g_k(x) \le 1$,
	
		\vspace{2mm}
	g3) $U_n(x,g_k)\to g_k(x)$ at every point $x\not\in E_k$,
	
		\vspace{2mm}
	g4) for every $x\in E_k $ we have $\delta(x,g_k)\ge 1/2$, where we use the notation
	\begin{equation}
		\delta(x,g)=\limsup_{n\to\infty}|U_n(x,g)-g(x)|.
	\end{equation}

	\vspace{2mm}
	\noindent
	We claim that
	\begin{equation*}
		f(x)=\sum_{i=1}^\infty\frac{g_i(x)}{(4\varrho+5)^i}
	\end{equation*}
	is our desired function, where $\varrho$ is the constant in \p{C2}. The boundedness of $f$ follows from condition g2), and condition g1) implies $\mu(\supp(f))<\varepsilon$. Chose an arbitrary point $x\in E$. For some  $k$ we have
	\begin{equation*}
		x\in E_k\setminus E_{k-1}.
	\end{equation*}
	This together with g3), g4) implies
	\begin{align}\label{k1}
	&\lim_{n\to\infty}U_n\left(x,g_i\right)=g_i(x)\text{ if }i\le k-1,\\
	&\delta(x,g_k)\ge 1/2,\\
	&\delta(x,g_i)\le \sup_n|U_n(x,g_i)|+|g_i(x)|\le \varrho+1,\quad i=1,2,\ldots.
\end{align}
Thus, using also \p{C2}, we obtain
	\begin{align*}
		\delta(x,f)&\ge \frac{\delta(x,g_k)}{(4\varrho+53)^k}-\sum_{i=k+1}^\infty \frac{\delta(x,g_i)}{(4\varrho+5)^i}\\
		&\ge \frac{1}{2\cdot (4\varrho+5)^k}-\sum_{i=k+1}^\infty \frac{\varrho+1}{(4\varrho+5)^i}\\
		&=\frac{1}{4\cdot (4\varrho+5)^k}>0,
	\end{align*}
	which means that the sequence $U_n(x,f)$ diverges for any $x\in E$. Letting $x\not\in E$, we have $x\not\in E_i$ for any $i\in \ZN$ and so $U_n(x,g_i)\to g_i(x)$ as $n\to\infty$. This implies
	\begin{align*}
		\delta(x,f)&=\delta\left(x,\sum_{i=k+1}^\infty (4\varrho+5)^{-i}g_i\right)\\
		&\le \sup_n\left|U_n\left(x,\sum_{i=k+1}^\infty (4\varrho+5)^{-i}g_i\right)\right|  +\left|\sum_{i=k+1}^\infty (4\varrho+5)^{-i}g_i(x)\right|\\
		&\le (\varrho+1)\sum_{i=k+1}^\infty (4\varrho+5)^{-i}.
	\end{align*}
Since the latter goes to zero, we obtain $\delta(x,f)=0$. Theorem is proved.
\end{proof}
\begin{proof}[Proof of \trm{T2}]
	We will make use the same functions $h_k$ constructed in the proof of \lem{L8}  with an additional bound $\mu(A_k)<\alpha_k$ which can be clearly ensured. Then for small enough numbers $\alpha_k<2^{-k}$ we will have
	\begin{equation}\label{x10}
		f(x)=\sum_{k=1}^\infty h_k(x)\in L_\phi(X),
	\end{equation}
and $\mu(\supp(f))<\varepsilon$. By the construction of the functions $h_k$ the initial set $E$ can be written in the form $E=\cap_kA_k$. Since $\|h_k\|_1<2^{-k+1}$, series \e{x10} converges in $L^1(X)$ and by the boundedness of the operator $U_n:L^1\to M$ we can write
\begin{equation}\label{x12}
	U_n(f)=\sum_{i=1}^\infty U_n(h_i),
\end{equation}
where the series converges uniformly on $X$. Letting $x\in E^c$, we have $x\in A_{k-1}\setminus A_{k}$ for some $k\ge 1$ ($A_0=X$). Then we have \e{g9}, \e{g10} and therefore,
\begin{equation}\label{x13}
	f(x)=\sum_{j=1}^{k}h_j(x)=k-1+h_{k}(x).
\end{equation}
Besides for such a point $x\in A_{k-1}\setminus A_{k}$ we have \e{x11}. Thus using h5) together with \e{x11}, \e{x12} and \e{x13}, for $m>k$ we get
\begin{equation}
	\limsup_{n\to\infty}|U_n(x,f)-f(x)|=\limsup_{n\to\infty} |\sum_{i=m}^\infty U_n(x,h_i)|
	\le \sum_{i=m}^\infty2^{-i}.
\end{equation}
Since $m$ can be arbitrarily bigger, this implies convergence condition b) of the theorem.
To prove condition a), suppose $x\in E$ and so $x\in A_k$ for all $k=1,2,\ldots $. Using this we may fix sets $\omega_k\in \Omega_k$ with $\omega_k\ni x$. Then using  \e{x14} and \e{x15}, we obtain
\begin{align}
		|U_{\nu_k(\omega_k)}(x,f)-k|&\le \sum_{i=1}^k|U_{\nu_k(\omega_k)}(x,h_i)-1|+\sum_{i=k+1}^\infty |U_{\nu_k(\omega_k)}(x,h_i)|\\
		&\le k\cdot \frac{1}{k^2}+\sum_{i=k+1}^\infty \frac{1}{i^2}<\frac{2}{k},
\end{align}
which implies the unboundedly divergence of $U_n(x,f)$ at a point $x\in E$.
\end{proof}
\begin{proof}[Proof of \trm{T3}]
	We construct open sets $G_k$, $k=1,2,\ldots,$ together with a regular partition $\Omega_k$ and a mapping $\nu_k:\Omega_k\to\ZN$, satisfying following conditions.

	\vspace{2mm}
	g1) $E\subset G_k\subset G_{k-1}$, $k\ge 1$ ($G_0=X$),
	
	\vspace{2mm}
	g2) $\min_{\omega\in \Omega_k}\nu_k(\omega)>k$, $k\ge 1$,
	
	\vspace{2mm}
	g3) $|U_{(\Omega_k,\nu_k)}(x,\ZI_{G_i})-1|<\frac{1}{k^2}$, $\quad x\in G_k$, $i\le k$,
	
	\vspace{2mm}
	g4) $|U_{(\Omega_i,\nu_i)}(x,\ZI_{G_k})|<\frac{1}{k^2}$, $\quad x\in X$,$\quad i<k$.
	
	\vspace{2mm}
	\noindent
	We do it by induction. Define the open set $G_1$ arbitrarily. By \lem{L7} there exists a regular partition $\Omega_1$ of $G_1$ and a function $\nu_1:\Omega_1\to\ZN$ such that
	\begin{equation*}
		|U_{(\Omega_1,\nu_1)}(x,\ZI_{G_1})-1|<1,\quad x\in G_1.
	\end{equation*}
	Then suppose that we have already chosen the sets $G_k$ and corresponding partitions $\Omega_k$ for $k=1,2,\ldots,p$.  Applying \lem{L11}, we may choose  open sets  $C_l$, $l=1,2,\ldots,p$ with $E\subset C_l\subset G_p$, satisfying
	\begin{equation*}
		\sum_{\omega\in \Omega_l}c_l(\omega )|C_l\cap \omega |<\frac{1}{(p+1)^2},\quad l=1,2,\ldots ,p,
	\end{equation*}
where $c_l(\omega )$ are the constants from \e{k5}. Then, denoting $G_{p+1}=\cap_{l=1}^p C_l$ and applying \lem{L12}, we obtain the inequality
\begin{equation}\label{x16}
	|U_{(\Omega_l,\nu_l)}(x,\ZI_{G_{p+1}})|\le \sum_{\omega \in\Omega_l}c_l(\omega )|G_{p+1}\cap \omega |\le \frac{1}{(p+1)^2},
\end{equation}
which holds for every $x\in X$ and $l\le p$. This implies g4) for $k=p+1$. Then, applying \lem{L7}, we find a regular partition $\Omega_{p+1}$ for $G_{p+1}$ and a mapping $\nu_{p+1}:\Omega_{p+1}\to\ZN$ satisfying g2) and g3) for $k=p+1$. This completes the induction.  Now we can define our desired set by
\begin{equation}
	G=\bigcup_{i=1}^\infty (G_{2i-1}\setminus G_{2i}).
\end{equation}
One can check,
\begin{equation}
	U_n(x,\ZI_G)=\sum_{k=1}^\infty(-1)^{k+1}U_n(x,\ZI_{G_k}).
\end{equation}
Suppose $x\in E$. Then we have $x\in G_k$, $k=1,2,\ldots $. Using this we may fix $\omega_k\in\Omega_k$ such that $\omega_k\ni x$. By condition g4) we have
\begin{equation}
	|U_{\nu_k(\omega_k)}(x,\ZI_{G_i})|\le \frac{1}{i^2},\quad i>k,
\end{equation}
and form g3) it follows that
\begin{equation}
	|U_{\nu_k(\omega_k)}(x,\ZI_{G_i})-1|\le  \frac{1}{k^2},\quad i\le k.
\end{equation}
Thus we obtain
\begin{align}
		|U_{\nu_k(\omega_k)}(x,f)-\sum_{i=1}^k(-1)^{i+1}|&\\
		&\le \sum_{i=1}^k|U_{\nu(\omega_k)}(x,\ZI_{G_i})-1|+\sum_{i=k+1}^\infty |U_{\nu(\omega_k)}(x,\ZI_{G_i})|\\
		&\le k\cdot \frac{1}{k^2}+\sum_{i=k+1}^\infty \frac{1}{i^2}<\frac{2}{k}
\end{align}
Thus we get divergence of $U_{\nu(\omega_k)}(x,f)$ for any $x\in E$.
\end{proof}
\section{Open problems}
Some open problems listed here one can find also in the papers \cite{Uly1,Uly2,CoLo, Wade}.
	\begin{enumerate}
	\item Find the complete characterization of $D$-sets ($UD$-sets) of ordinary trigonometric series and Fourier series of $L^p$, $1\le p\le \infty$, or $C(\ZT)$ functions. Note that K\"orner \cite{Kor} in 1961 constructed a $G_\delta$-set, which is not a convergence set for any kind of trigonometric series. This example shows that for the partial sums of Fourier or ordinary trigonometric series a pure topological characterization of $C$-$D$ sets may fail (\cite{Uly1,Uly2}). 
	\item The same problems are open also for the Walsh series.
	\item Kahane-Katznelson's \cite{KaKa} theorem analogue for the Walsh system is an open problem(\cite{Wade}). It claims, given null-set $E$, construct a continuous function which Walsh-Fourier series diverges at any $x\in E$ . Concerning to this problem we note that Harris \cite{Har} has proved that for any compact null set $e\subset [0,1]$ there exists a continuous function, whose Walsh-Fourier series diverges at any $x\in e$. As we noted the original proof of \cite{KaKa} essentially uses analytic functions technique, which hardly can be applied in Walsh case. A real functions approach to Kahane-Katznelson's theorem one can find in \cite{Kar4}.
	\item Characterize the sets, which are radial $D$-sets of an univalent function on the unit disc  (\cite{CoLo}).
	\item  It was considered in \cite{Kar4} the exceptional null set problem for the Hilbert transform
	\begin{equation}\label{x66}
		Hf(x)=\lim_{\varepsilon\to 0}\int_{|t-x|>\varepsilon}\frac{f(t)}{x-t}dt.
	\end{equation}
	It is well-known the almost everywhere existence of this limit whenever $f\in L^1(\ZR)$ (see for example \cite {Zyg}, ch. 4.3).  It was proved in \cite{Kar4} that
for any closed null set $e\subset \ZR$ there exists a continuous function $f\in C(\ZR)\cap L^1(\ZR)$ such that the limit in \e{x66} doesn't exists at any point of $e$. We do not know whether the set $e$ in this statement can be an arbitrary null set.
\end{enumerate}
\begin{bibdiv}
	\begin{biblist}
		\bib{Bug}{article}{
		author={Bugadze, V. M.},
		title={On the divergence of Fourier-Haar series of bounded functions on
			sets of measure zero},
		language={Russian, with Russian summary},
		journal={Mat. Zametki},
		volume={51},
		date={1992},
		number={5},
		pages={20--26, 156},
		issn={0025-567X},
		translation={
			journal={Math. Notes},
			volume={51},
			date={1992},
			number={5-6},
			pages={437--441},
			issn={0001-4346},
		},
		review={\MR{1186527}},
		doi={10.1007/BF01262173},
	}
\bib{Buz}{article}{
	author={Buzdalin, V. V.},
	title={Unboundedly diverging trigonometric Fourier series of continuous
		functions},
	language={Russian},
	journal={Mat. Zametki},
	volume={7},
	date={1970},
	pages={7--18},
	issn={0025-567X},
	review={\MR{262757}},
}
\bib{Buz1}{article}{
	author={Buzdalin, V. V.},
	title={Trigonometric Fourier series of continuous functions that diverge
		on a given set},
	language={Russian},
	journal={Mat. Sb. (N.S.)},
	volume={95(137)},
	date={1974},
	pages={84--107, 159},
	issn={0368-8666},
	review={\MR{358197}},
}
\bib{Car}{article}{
	author={Carleson, Lennart},
	title={On convergence and growth of partial sums of Fourier series},
	journal={Acta Math.},
	volume={116},
	date={1966},
	pages={135--157},
	issn={0001-5962},
	review={\MR{199631}},
	doi={10.1007/BF02392815},
}
\bib{Christ}{book}{
	author={Christ, Michael},
	title={Lectures on singular integral operators},
	series={CBMS Regional Conference Series in Mathematics},
	volume={77},
	publisher={Published for the Conference Board of the Mathematical
		Sciences, Washington, DC; by the American Mathematical Society,
		Providence, RI},
	date={1990},
	pages={x+132},
	isbn={0-8218-0728-5},
	review={\MR{1104656}},
}
\bib{CoWe}{book}{
	author={Coifman, Ronald R.},
	author={Weiss, Guido},
	title={Analyse harmonique non-commutative sur certains espaces homog\`enes},
	language={French},
	series={Lecture Notes in Mathematics, Vol. 242},
	note={\'{E}tude de certaines int\'{e}grales singuli\`eres},
	publisher={Springer-Verlag, Berlin-New York},
	date={1971},
	pages={v+160},
	review={\MR{499948}},
}
\bib{CoLo}{book}{
	author={Collingwood, E. F.},
	author={Lohwater, A. J.},
	title={The theory of cluster sets},
	series={Cambridge Tracts in Mathematics and Mathematical Physics, No. 56},
	publisher={Cambridge University Press, Cambridge},
	date={1966},
	pages={xi+211},
	review={\MR{231999}},
}
\bib{Dya}{article}{
	author={D\cprime yachkov, A. M.},
	title={Description of sets of Lebesgue points and summability points of a
		Fourier series},
	language={Russian},
	journal={Mat. Sb.},
	volume={182},
	date={1991},
	number={9},
	pages={1367--1374},
	issn={0368-8666},
	translation={
		journal={Math. USSR-Sb.},
		volume={74},
		date={1993},
		number={1},
		pages={111--118},
		issn={0025-5734},
	},
	review={\MR{1133575}},
}
\bib{Fat}{article}{
	author={Fatou, P.},
	title={S\'{e}ries trigonom\'{e}triques et s\'{e}ries de Taylor},
	journal={Acta Math.},
	volume={30},
	date={1906},
	pages={335--400}
}
\bib{Fow}{article}{
author={Fowler, Thomas},
author={Preiss, David},
title={A simple proof of Zahorski's description of non-differentiability
	sets of Lipschitz functions},
journal={Real Anal. Exchange},
volume={34},
date={2009},
number={1},
pages={127--138},
issn={0147-1937},
review={\MR{2527127}},
}
\bib{Gog}{article}{
	author={Goginava, U.},
	title={On the divergence of Walsh-Fej\'{e}r means of bounded functions on
		sets of measure zero},
	journal={Acta Math. Hungar.},
	volume={121},
	date={2008},
	number={4},
	pages={359--369},
	issn={0236-5294},
	review={\MR{2461440}},
	doi={10.1007/s10474-008-7219-2},
}

\bib{GES}{book}{
	author={Golubov, B.},
	author={Efimov, A.},
	author={Skvortsov, V.},
	title={Walsh series and transforms},
	series={Mathematics and its Applications (Soviet Series)},
	volume={64},
	note={Theory and applications;
		Translated from the 1987 Russian original by W. R. Wade},
	publisher={Kluwer Academic Publishers Group, Dordrecht},
	date={1991},
	pages={xiv+368},
	isbn={0-7923-1100-0},
	review={\MR{1155844}},
	doi={10.1007/978-94-011-3288-6},
}
\bib{Hahn}{article}{
	author={Hahn, H.},
	title={Ueber die Menge der Konvergenzpunkte einer Funktionenfolge},
	journal={Arch. d. Math. u. Phys.},
	volume={28},
	date={1919},
	pages={34--45},
}
\bib{Har}{article}{
	author={Harris, David C.},
	title={Compact sets of divergence for continuous functions on a Vilenkin
		group},
	journal={Proc. Amer. Math. Soc.},
	volume={98},
	date={1986},
	number={3},
	pages={436--440},
	issn={0002-9939},
	review={\MR{857936}},
	doi={10.2307/2046197},
}
\bib{Hau}{book}{
	author={Hausdorff, Felix},
	title={Set theory},
	note={Translated by John R. Aumann, et al},
	publisher={Chelsea Publishing Co., New York},
	date={1957},
	pages={352},
	review={\MR{86020}},
}
\bib{HeWe}{book}{
	author={Hern\'{a}ndez, Eugenio},
	author={Weiss, Guido},
	title={A first course on wavelets},
	series={Studies in Advanced Mathematics},
	note={With a foreword by Yves Meyer},
	publisher={CRC Press, Boca Raton, FL},
	date={1996},
	pages={xx+489},
	isbn={0-8493-8274-2},
	review={\MR{1408902}},
	doi={10.1201/9781420049985},
}
\bib{Hunt}{article}{
	author={Hunt, Richard A.},
	title={On the convergence of Fourier series},
	conference={
		title={Orthogonal Expansions and their Continuous Analogues},
		address={Proc. Conf., Edwardsville, Ill.},
		date={1967},
	},
	book={
		publisher={Southern Illinois Univ. Press, Carbondale, IL},
	},
	date={1968},
	pages={235--255},
	review={\MR{238019}},
}
\bib{KaKa}{article}{
	author={Kahane, Jean-Pierre},
	author={Katznelson, Yitzhak},
	title={Sur les ensembles de divergence des s\'{e}ries trigonom\'{e}triques},
	language={French},
	journal={Studia Math.},
	volume={26},
	date={1966},
	pages={305--306},
	issn={0039-3223},
	review={\MR{199633}},
	doi={10.4064/sm-26-3-307-313},
}
\bib{KaGa}{article}{
	author={Grigoryan, Martin G.},
	author={Kamont, Anna},
	author={Maranjyan, Artavazd A.},
	title={Menshov-type theorem for divergence sets of sequences of localized operators},
	language={Russian, with English and Russian summaries},
	journal={Izv. Nats. Akad. Nauk Armenii Mat.},
	volume={58},
	date={2023},
	number={2},
	pages={46--62},
	issn={0002-3043},
	translation={
		journal={J. Contemp. Math. Anal.},
		volume={48},
		date={2023},
		number={2},
		issn={1068-3623},
	}
}
\bib{Kar1}{article}{
	author={Karagulyan, Grigori A.},
	title={Divergence of general operators on sets of measure zero},
	journal={Colloq. Math.},
	volume={121},
	date={2010},
	number={1},
	pages={113--119},
	issn={0010-1354},
	review={\MR{2725706}},
	doi={10.4064/cm121-1-10},
}
\bib{Kar2}{article}{
	author={Karagulyan, Grigori A.},
	title={On a characterization of the sets of divergence points of
		sequences of operators with the locallyization property},
	language={Russian, with Russian summary},
	journal={Mat. Sb.},
	volume={202},
	date={2011},
	number={1},
	pages={11--36},
	issn={0368-8666},
	translation={
		journal={Sb. Math.},
		volume={202},
		date={2011},
		number={1-2},
		pages={9--33},
		issn={1064-5616},
	},
	review={\MR{2796825}},
	doi={10.1070/SM2011v202n01ABEH004136},
}
\bib{Kar3}{article}{
	author={Karagulyan, Grigori A.},
	title={Complete characterization of sets of divergence points of
		Fourier-Haar series},
	language={Russian, with English and Russian summaries},
	journal={Izv. Nats. Akad. Nauk Armenii Mat.},
	volume={45},
	date={2010},
	number={6},
	pages={33--50},
	issn={0002-3043},
	translation={
		journal={J. Contemp. Math. Anal.},
		volume={45},
		date={2010},
		number={6},
		pages={334--347},
		issn={1068-3623},
	},
	review={\MR{2828809}},
	doi={10.3103/S1068362310060051},
}
\bib{Kar4}{article}{
	author={Karagulyan, Grigori A.},
	title={On exceptional sets of the Hilbert transform},
	journal={Real Anal. Exchange},
	volume={42},
	date={2017},
	number={2},
	pages={311--327},
	issn={0147-1937},
	review={\MR{3721804}},
	doi={10.14321/realanalexch.42.2.0311},
}
\bib{KarKar}{article}{
	author={Karagulyan, G. A.},
	author={Karagulyan, D. A.},
	title={On a characterization of extremal sets of differentiation of
		integrals in $\Bbb R^2$},
	language={Russian, with English and Russian summaries},
	journal={Izv. Nats. Akad. Nauk Armenii Mat.},
	volume={49},
	date={2014},
	number={6},
	pages={83--108},
	issn={0002-3043},
	translation={
		journal={J. Contemp. Math. Anal.},
		volume={49},
		date={2014},
		number={6},
		pages={334--351},
		issn={1068-3623},
	},
	review={\MR{3381401}},
	doi={10.3103/s1068362314060090},
}
\bib{Kat}{book}{
	author={Katznelson, Yitzhak},
	title={An introduction to harmonic analysis},
	series={Cambridge Mathematical Library},
	edition={3},
	publisher={Cambridge University Press, Cambridge},
	date={2004},
	pages={xviii+314},
	isbn={0-521-83829-0},
	isbn={0-521-54359-2},
	review={\MR{2039503}},
	doi={10.1017/CBO9781139165372},
}
\bib{Khe}{article}{
	author={Kheladze, \v{S}. V.},
	title={The divergence everywhere of Fourier-Walsh series},
	language={Russian, with English and Georgian summaries},
	journal={Sakharth. SSR Mecn. Akad. Moambe},
	volume={77},
	date={1975},
	pages={305--307},
	review={\MR{374794}},
}
\bib{Kol}{article}{
	author={Kolesnikov, S. V.},
	title={On the sets of nonexistence of radial limits of bounded analytic
		functions},
	language={Russian, with Russian summary},
	journal={Mat. Sb.},
	volume={185},
	date={1994},
	number={4},
	pages={91--100},
	issn={0368-8666},
	translation={
		journal={Russian Acad. Sci. Sb. Math.},
		volume={81},
		date={1995},
		number={2},
		pages={477--485},
		issn={1064-5616},
	},
	review={\MR{1272188}},
	doi={10.1070/SM1995v081n02ABEH003547},
}
\bib{Kor}{article}{
	author={K\"{o}rner, T. W.},
	title={Sets of divergence for Fourier series},
	journal={Bull. London Math. Soc.},
	volume={3},
	date={1971},
	pages={152--154},
	issn={0024-6093},
	review={\MR{290022}},
	doi={10.1112/blms/3.2.152},
}
\bib{Luk1}{article}{
	author={Luka\v{s}enko, S. Ju.},
	title={The structure of sets of divergence of trigonometric and Walsh
		series},
	language={Russian},
	journal={Dokl. Akad. Nauk SSSR},
	volume={253},
	date={1980},
	number={3},
	pages={528--530},
	issn={0002-3264},
	review={\MR{581404}},
}
\bib{Luk2}{article}{
	author={Lukashenko, S. Yu.},
	title={The structure of divergence sets for Fourier-Walsh series},
	language={Russian, with English summary},
	journal={Anal. Math.},
	volume={10},
	date={1984},
	number={1},
	pages={23--41},
	issn={0133-3852},
	review={\MR{756881}},
	doi={10.1007/BF02115870},
}
\bib{Lun}{article}{
	author={Lunina, M. A.},
	title={The set of points of unbounded divergence of series in the Haar
		system},
	language={Russian, with English summary},
	journal={Vestnik Moskov. Univ. Ser. I Mat. Meh.},
	volume={31},
	date={1976},
	number={4},
	pages={13--20},
	issn={0201-7385},
	review={\MR{415188}},
}
\bib{Lus}{article}{
	author={Luzin, N. N.},
	title={Sur la representation conforme},
	language={Russian},
	journal={Bull. Ivanovo-Vozn. Polytech. Inst.},
	volume={2},
	date={1919},
	pages={77--80},

}
\bib{MaSe}{article}{
	author={Mac\'{\i}as, Roberto A.},
	author={Segovia, Carlos},
	title={Lipschitz functions on spaces of homogeneous type},
	journal={Adv. in Math.},
	volume={33},
	date={1979},
	number={3},
	pages={257--270},
	issn={0001-8708},
	review={\MR{546295}},
	doi={10.1016/0001-8708(79)90012-4},
}
\bib{Mul}{book}{
	author={M\"{u}ller, Paul F. X.},
	title={Isomorphisms between $H^1$ spaces},
	series={Instytut Matematyczny Polskiej Akademii Nauk. Monografie
		Matematyczne (New Series) [Mathematics Institute of the Polish Academy of
		Sciences. Mathematical Monographs (New Series)]},
	volume={66},
	publisher={Birkh\"{a}user Verlag, Basel},
	date={2005},
	pages={xiv+453},
	isbn={978-3-7643-2431-5},
	isbn={3-7643-2431-7},
	review={\MR{2157745}},
}
\bib{PaSh}{article}{
	author={Passenbrunner, M.},
	author={Shadrin, A.},
	title={On almost everywhere convergence of orthogonal spline projections
		with arbitrary knots},
	journal={J. Approx. Theory},
	volume={180},
	date={2014},
	pages={77--89},
	issn={0021-9045},
	review={\MR{3164047}},
	doi={10.1016/j.jat.2013.12.004},
}
\bib{Pir}{article}{
	author={Piranian, George},
	title={The set of nondifferentiability of a continuous function},
	journal={Amer. Math. Monthly},
	volume={73},
	date={1966},
	number={4},
	pages={57--61},
	issn={0002-9890},
	review={\MR{193193}},
	doi={10.2307/2313750},
}
\bib{Pro}{article}{
	author={Prohorenko, V. I.},
	title={Divergent Fourier series with respect to Haar's system},
	language={Russian},
	journal={Izv. Vys\v{s}. U\v{c}ebn. Zaved. Matematika},
	date={1971},
	number={1(104)},
	pages={62--68},
	issn={0021-3446},
	review={\MR{294983}},
}
\bib{Sha}{article}{
	author={Shadrin, A. Yu.},
	title={The $L_\infty$-norm of the $L_2$-spline projector is bounded
		independently of the knot sequence: a proof of de Boor's conjecture},
	journal={Acta Math.},
	volume={187},
	date={2001},
	number={1},
	pages={59--137},
	issn={0001-5962},
	review={\MR{1864631}},
	doi={10.1007/BF02392832},
}
\bib{Sie}{article}{
	author={Sierpinski, W.},
	title={ Sur l'ensemble des points de convergence d'une suite de fonctions continues},
	journal={Fund. Math.},
	volume={2},
	number={1},
	date={1921},
	pages={41--47},
}
\bib{Sim}{book}{
	author={Simon, Leon},
	title={Lectures on geometric measure theory},
	series={Proceedings of the Centre for Mathematical Analysis, Australian
		National University},
	volume={3},
	publisher={Australian National University, Centre for Mathematical
		Analysis, Canberra},
	date={1983},
	pages={vii+272},
	isbn={0-86784-429-9},
	review={\MR{756417}},
}
\bib{Tai}{article}{
	author={Taikov, L. V.},
	title={On the divergence of Fourier series with respect to a re-arranged
		trigonometrical system},
	language={Russian},
	journal={Uspehi Mat. Nauk},
	volume={18},
	date={1963},
	number={5(113)},
	pages={191--198},
	issn={0042-1316},
	review={\MR{156147}},
}
\bib{Uly1}{article}{
	author={Ul\cprime yanov, P. L.},
	title={Divergent Fourier series},
	language={Russian},
	journal={Uspekhi Mat. Nauk},
	volume={12},
	date={1957},
	number={2},
	pages={75--132},
	issn={0042-1316},
}
\bib{Uly2}{article}{
	author={Ul\cprime yanov, P. L.},
	title={A. N. Kolmogorov and divergent Fourier series},
	language={Russian},
	journal={Uspekhi Mat. Nauk},
	volume={38},
	date={1983},
	number={4(232)},
	pages={51--90},
	issn={0042-1316},
	review={\MR{710115}},
}

\bib{Wade}{article}{
	author={Wade, William R.},
	title={Recent developments in the theory of Walsh series},
	journal={Internat. J. Math. Math. Sci.},
	volume={5},
	date={1982},
	number={4},
	pages={625--673},
	issn={0161-1712},
	review={\MR{679409}},
	doi={10.1155/S0161171282000611},
}
\bib{Zah}{article}{
	author={Zahorski, Zygmunt},
	title={Sur l'ensemble des points de non-d\'{e}rivabilit\'{e} d'une fonction
		continue},
	language={French},
	journal={Bull. Soc. Math. France},
	volume={74},
	date={1946},
	pages={147--178},
	issn={0037-9484},
	review={\MR{22592}},
}
\bib{Zah1}{article}{
	author={Zahorski, Zygmunt},
	title={Sur les int\'{e}grales singuli\`eres},
	language={French},
	journal={C. R. Acad. Sci. Paris},
	volume={223},
	date={1946},
	pages={399--401},
	issn={0001-4036},
	review={\MR{18256}},
}
\bib{Zel}{article}{
	author={Zeller, Karl},
	title={\"{U}ber Konvergenzmengen von Fourierreihen},
	language={German},
	journal={Arch. Math.},
	volume={6},
	date={1955},
	pages={335--340},
	issn={0003-889X},
	review={\MR{69302}},
	doi={10.1007/BF01899414},
}
\bib{Zyg}{book}{
	author={Zygmund, A.},
	title={Trigonometric series. 2nd ed. Vols. I, II},
	publisher={Cambridge University Press, New York},
	date={1959},
	pages={Vol. I. xii+383 pp.; Vol. II. vii+354},
	review={\MR{107776}},
}
\end{biblist}
\end{bibdiv}
	
\end{document}